\theoremstyle{definition}
\newtheorem{definition}{Definition}
\newtheorem{example}[definition]{Example}
\newtheorem{remark}[definition]{Remark}
\theoremstyle{plain}
\newtheorem{theorem}{Theorem}
\newtheorem{proposition}[theorem]{Proposition}
\newtheorem{lemma}[theorem]{Lemma}
\newcommand{\xmark}{\ding{55}}%
\newcommand{\Incorrect}{\xmark}
\begin{document}
%
\title{Foundation of Calculating Normalized Maximum Likelihood for Continuous Probability Models}
%
%
%

\author{Atsushi Suzuki, Kota Fukuzawa, and Kenji Yamanishi,~\IEEEmembership{Senior Member,~IEEE,}
\thanks{Atsushi Suzuki is with the Department
of Informatics, King's College London, UK e-mail: atsushi.suzuki.rd@gmail.com.}
\thanks{Kota Fukuzawa and Kenji Yamanishi are Graduate School of Information Science and Technology, The University of Tokyo.}
}
%
%

\markboth{Journal of \LaTeX\ Class Files,~Vol.~13, No.~9, September~2014}%
{Shell \MakeLowercase{\textit{et al.}}: Bare Demo of IEEEtran.cls for Journals}
%



\maketitle

\begin{abstract}
The normalized maximum likelihood (NML) code length is widely used as a model selection criterion based on the minimum description length principle, where the model with the shortest NML code length is selected.
A common method to calculate the NML code length is to use the sum (for a discrete model) or integral (for a continuous model) of a function defined by the distribution of the maximum likelihood estimator.
While this method has been proven to correctly calculate the NML code length of discrete models, no proof has been provided for continuous cases. 
Consequently, it has remained unclear whether the method can accurately calculate the NML code length of continuous models.
In this paper, we solve this problem affirmatively, proving that the method is also correct for continuous cases.
Remarkably, completing the proof for continuous cases is non-trivial in that it cannot be achieved by merely replacing the sums in discrete cases with integrals, as the decomposition trick applied to sums in the discrete model case proof is not applicable to integrals in the continuous model case proof.
To overcome this, we introduce a novel decomposition approach based on the coarea formula from geometric measure theory, 
which is essential to establishing our proof for continuous cases.

\end{abstract}

\begin{IEEEkeywords}
normalized maximum likelihood code length, minimum description length principle, g-function, coarea formula, geometric measurement theory, Hausdorff measure\end{IEEEkeywords}

%
\IEEEpeerreviewmaketitle

\newcommand{\NewTerm}[1]{\textbf{\textit{#1}}}
\newcommand{\Emph}[1]{\textbf{#1}}
\newcommand{\Real}{\mathbb{R}}
\newcommand{\argmin}{\mathop{\mathrm{arg\,min}}}
\newcommand{\argmax}{\mathop{\mathrm{arg\,max}}}
\newcommand{\esssup}{\mathop{\mathrm{ess\,sup}}}
\newcommand{\essinf}{\mathop{\mathrm{ess\,inf}}}
\newcommand{\Integer}{\mathbb{Z}}
\newcommand{\DefEq}{\coloneqq}
\newcommand{\ML}{\mathrm{ML}}
\newcommand{\NML}{\mathrm{NML}}
\newcommand{\DataRV}{X}
\newcommand{\DataRVVec}{\boldsymbol{\DataRV}}
\newcommand{\DataVal}{x}
\newcommand{\DataValVec}{\boldsymbol{\DataVal}}
\newcommand{\DataSpace}{\mathcal{X}}
\newcommand{\DataDim}{D}
\newcommand{\Param}{\theta}
\newcommand{\ParamVec}{\boldsymbol{\theta}}
\newcommand{\ParamMap}{\mathrm{\uptheta}}
\newcommand{\ParamMapVec}{\boldsymbol{\ParamMap}}
\newcommand{\ParamMLE}{\hat{\mathrm{\uptheta}}}
\newcommand{\ParamMLEVec}{\boldsymbol{\ParamMLE}}
\newcommand{\ParamSpace}{\varTheta}
\newcommand{\ParamDim}{K}
\newcommand{\IDatum}{n}
\newcommand{\NData}{N}
\newcommand{\ProbabilityMeasure}{\mu}
\newcommand{\BaseMeasure}{\lambda}
\newcommand{\PDF}{p}
\newcommand{\PMF}{P}
\newcommand{\Probability}{\mathop{\mathrm{Pr}}}
\newcommand{\CodeLen}{l}
\newcommand{\Func}{\mathrm{f}}
\newcommand{\FuncI}{\mathrm{f}}
\newcommand{\FuncII}{\mathrm{g}}
\newcommand{\FuncIII}{\mathrm{h}}
\newcommand{\FuncVec}{\boldsymbol{\Func}}
\newcommand{\FuncVecI}{\boldsymbol{\FuncI}}
\newcommand{\FuncVecII}{\boldsymbol{\FuncII}}
\newcommand{\FuncVecIII}{\boldsymbol{\FuncIII}}
\newcommand{\Set}{A}
\newcommand{\SetI}{A}
\newcommand{\SetII}{B}
\newcommand{\SetIII}{C}
\newcommand{\Lebesgue}{\mathcal{L}}
\newcommand{\Hausdorff}{\mathcal{H}}
\newcommand{\SufficientSymbol}{\mathrm{s}}
\newcommand{\SufficientStat}{\mathrm{s}}
\newcommand{\SufficientStatVec}{\boldsymbol{\SufficientStat}}
\newcommand{\SufficientVal}{s}
\newcommand{\SufficientValVec}{\boldsymbol{\SufficientVal}}
\newcommand{\Complexity}{\textsc{comp}}
\newcommand{\ExponentialComplexity}{\textsc{Comp}}
\newcommand{\ParametricComplexity}{\mathrm{MC}}
\newcommand{\Luckiness}{v}
\newcommand{\Diam}{\mathop{\mathrm{diam}}}
\newcommand{\TagIncorrect}{\stepcounter{equation}\tag{\theequation-\Incorrect}}

\section{Introduction}
\IEEEPARstart{I}{n} data science, learning is to find the regularity underlying a given finite-length sequence of observed data \cite{Grunwald:2007}. 
In the context of model selection with a probabilistic framework, such as statistics, machine learning, or information theory, a regularity is formulated as a probabilistic model. 
In this context, the goal is to select the most suitable one among several candidate probabilistic models. 
Here, a probabilistic model refers to a set of probability distributions. In the following, we will always parametrize the probability distributions belonging to a probability model.

In general, given a sequence of observed data of finite length, a classical principle for selecting one among several possible explanations is the minimum description length (MDL) criterion. 
This principle states that the explanation that most concisely describes the observed data sequence should be selected. 
To apply this principle in reality, we need to quantify the brevity of the description. 
In the context of probabilistic model selection, we need the quantitatively determine the length required for the probabilistic model to describe the observed data sequence. 
The length is formulated based on information theory as the \NewTerm{normalized maximum likelihood (NML) code length} \cite{Shtarkov:1987}, also called \NewTerm{stochastic complexity} \cite{Rissanen:1989} \cite{Rissanen:1996}. 
Therefore, probabilistic model selection based on the MDL criterion refers to the procedure to select a probabilistic model with the shortest NML code length for an observed data sequence.
There have been many model selection applications based on the NML code length (e.g., \cite{tabus2003classification} \cite{Myung+:2006} \cite{Roos+:2008} \cite{schmidt2010estimating} \cite{giurcuaneanu2011variable} \cite{zhang2012model} \cite{Hirai&Yamanishi:2013:Efficient} 
\cite{staniczenko2014selecting} \cite{Ito:2016} \cite{Suzuki:2016} \cite{yamanishi2019decomposed} \cite{kellen2020selecting} \cite{yamanishi2023detecting}).

Specifically, if we have a set of probabilistic models $\ab \{\ProbabilityMeasure^{(j)}_{\ParamSpace^{(j)}}\}_{j \in J}$, then the MDL-principle using the NML code length is to select the probabilistic model index $j \in J$ that minimizes $\CodeLen_{\NML} \ab[\ProbabilityMeasure^{(j)}_{\ParamSpace^{(j)}}] \ab (\ParamVec)$, where $\CodeLen_{\NML} \ab[\ProbabilityMeasure^{(j)}_{\ParamSpace^{(j)}}]$ is the NML code length function of the probabilistic model $\ProbabilityMeasure^{(j)}_{\ParamSpace^{(j)}}$. 
Here, if a probabilistic model $\ProbabilityMeasure_{\ParamSpace} = \ab\{\ProbabilityMeasure_{\ParamVec}\}_{\ParamVec \in \ParamSpace}$ with parameter $\ParamVec$ is given, the NML code length $\CodeLen_\NML \ab[\ProbabilityMeasure_{\ParamSpace}] \ab(\DataValVec)$ for a data (sequence) $\DataValVec \in \DataSpace$ is defined as follows:
\begin{equation}
\CodeLen_\NML \ab[\ProbabilityMeasure_{\ParamSpace}] \ab(\DataValVec)
\DefEq
\CodeLen_\ML \ab[\ProbabilityMeasure_{\ParamSpace}] \ab(\DataValVec)
+
\Complexity \ab(\ProbabilityMeasure_{\ParamSpace}).
\end{equation}
Here, $\Complexity \ab(\ProbabilityMeasure_{\ParamSpace})$ is the value independent of $\DataValVec$ called the \NewTerm{model complexity (MC)} (e.g., \cite{grunwald2019minimum}) or \NewTerm{parametric complexity} (e.g., \cite{Grunwald:2007}) of the probability model $\ProbabilityMeasure_{\ParamSpace}$ defined later, and the log-max-likelihood $\CodeLen_\ML \ab[\ProbabilityMeasure_{\ParamSpace}] \ab(\DataValVec)$ is defined by
\begin{equation}
\CodeLen_\ML \ab[\ProbabilityMeasure_{\ParamSpace}] \ab(\DataValVec)
\DefEq
\begin{cases}
- \log_{b} \PMF \ab[\ProbabilityMeasure_{\ParamMLEVec \ab(\DataValVec)}] \ab(\DataValVec) & \quad \textbf{(discrete case)},
\\    
- \log_{b} \PDF \ab[\ProbabilityMeasure_{\ParamMLEVec \ab(\DataValVec)}] \ab(\DataValVec) & \quad \textbf{(continuous case)},
\\
\end{cases}
\end{equation}
where $\PMF \ab[\ProbabilityMeasure]$ and $\PDF \ab[\ProbabilityMeasure]$ are the probability mass function (PMF) and probability density function (PDF) of the distribution $\ProbabilityMeasure$, respectively, the $b$ is a fixed positive number ($b=2$ and $b=e$ are often chosen for discrete and continuous cases, respectively), and the map $\ParamMLEVec$ is the maximum likelihood estimator defined by
\begin{equation}
\ParamMLEVec \ab(\DataValVec)
\DefEq
\begin{cases}
\displaystyle \argmax_{\ParamVec \in \ParamSpace} \PMF \ab[\ProbabilityMeasure_{\ParamVec}] \ab(\DataValVec) & \quad \textbf{(discrete case)},
\\    
\displaystyle \argmax_{\ParamVec \in \ParamSpace} \PDF \ab[\ProbabilityMeasure_{\ParamVec}] \ab(\DataValVec) & \quad \textbf{(continuous case)}.
\\
\end{cases}
\end{equation}
Intuitively speaking, $\CodeLen_\ML \ab[\ProbabilityMeasure_{\ParamSpace}] \ab(\DataValVec)$ indicates the minimum code length that would be required to describe the data $\DataValVec$ when we knew the best distribution $\ProbabilityMeasure_{\ParamMLEVec \ab(\DataValVec)}$ in advance.
In reality, since we cannot know the best distribution in advance, we cannot achieve as short a code length as what $\CodeLen_\ML \ab[\ProbabilityMeasure_{\ParamSpace}]$ provides.
Indeed, any code length function $\CodeLen$ must satisfy the Kraft-McMillan inequality \cite{Kraft:1949} \cite{McMillan:1956}
\begin{equation}
\label{eqn:Kraft}
\begin{split}
\sum_{\DataValVec \in \DataSpace} b^{-\CodeLen (\DataValVec)} & \le 1 \quad \textbf{(discrete case)}, \\
\int_{\DataSpace} b^{-\CodeLen (\DataValVec)} \odif{\DataValVec} & \le 1 \quad \textbf{(continuous case)}, \\
\end{split}
\end{equation}
whereas $\CodeLen_\ML \ab[\ProbabilityMeasure_{\ParamSpace}]$ does not satisfy it.
The parametric complexity $\Complexity \ab(\ProbabilityMeasure_{\ParamSpace})$ is defined as the minimum constant for the function $\CodeLen_{\NML}$ to be a code length function, i.e., to satisfy the Kraft-McMillan inequality \eqref{eqn:Kraft}.
Specifically, it is defined by
\begin{equation}
\Complexity \ab(\ProbabilityMeasure_{\ParamSpace})
\DefEq
\min \ab\{C \in \Real \middle| \textrm{$\CodeLen_{\ML} \ab[\ProbabilityMeasure_{\ParamSpace}] + C$ satisfies \eqref{eqn:Kraft}}\},
\end{equation}
where $\CodeLen_{\ML} \ab[\ProbabilityMeasure_{\ParamSpace}] + C$ is defined by $\ab(\CodeLen_{\ML} \ab[\ProbabilityMeasure_{\ParamSpace}] + C) \ab(\DataValVec) \DefEq \CodeLen_{\ML} \ab[\ProbabilityMeasure_{\ParamSpace}] \ab(\DataValVec) + C$.
It is known that $\Complexity \ab(\ProbabilityMeasure_{\ParamSpace})$ is given by
\begin{equation}
\Complexity \ab(\ProbabilityMeasure_{\ParamSpace})
=
\begin{cases}
\displaystyle \log_{b} \ab(\sum_{\DataValVec \in \DataSpace} \PMF \ab[\ProbabilityMeasure_{\ParamMLEVec \ab(\DataValVec)}] \ab(\DataValVec)) & \quad \textbf{(discrete case)},
\\    
\displaystyle \log_{b} \ab(\int_{\DataSpace} \PDF \ab[\ProbabilityMeasure_{\ParamMLEVec \ab(\DataValVec)}] \ab(\DataValVec) \odif{\DataValVec}) & \quad \textbf{(continuous case)}.
\\
\end{cases}
\end{equation}
In learning theory perspective, the MC is a measure of the overfitting risk of the probabilistic model $\ProbabilityMeasure_{\ParamSpace}$.
In general, the more probability measures it contains, the higher overfitting risk it has.
Since the MC $\Complexity$ is an increasing function as a set function, i.e., $\Complexity \ab(\ProbabilityMeasure_{\ParamSpace}) \le \Complexity \ab(\ProbabilityMeasure'_{\ParamSpace'})$ if $\ProbabilityMeasure_{\ParamSpace} \subset \ProbabilityMeasure'_{\ParamSpace'}$, we can say that it measures the risk of overfitting in some sense.
On the other hand, the negative-log-maximum-likelihood $\CodeLen_\ML \ab[\ProbabilityMeasure_{\ParamSpace}] \ab(\DataValVec)$ indicates how well the probabilistic model fit the data and decreasing function as a set function.
Hence, the NML code length, which is the sum of the negative-log-maximum-likelihood and the model complexity can be regarded as an index to measure how well the model works on the observed data in the trade-off between underfitting and overfitting.

In evaluating the NML code length, the evaluation of the MC $\Complexity \ab(\ProbabilityMeasure_{\ParamSpace})$ is more difficult than the log-max-likelihood $\CodeLen_\ML \ab[\ProbabilityMeasure_{\ParamSpace}] \ab(\DataValVec)$.
This is because while the closed form of the maximum likelihood for each data point suffices for evaluating the log-max-likelihood, evaluating the second term requires calculating its sum or integral over the data space $\DataSpace$.
In some discrete cases, we can directly evaluate the sum. For example, efficient dynamical programming-based algorithms have been derived for the multinomial model \cite{Kontkanen&Myllymaki:2007:Linear} and the acyclic directed graph-based na\"{i}ve Bayes model with latent variables has been observed \cite{Mononen&Myllymaki:2007:Fast}.
However, for most continuous cases, the integral in the definition of the MC is often intractable since each data point is usually a sequence and the dimensionality of the data space $\DataSpace$ is proportional to the length of each sequence. 
For example, if we consider a data sequence of two-dimensional vectors with a length of 100, we need to evaluate an integral or sum on
$\DataSpace$ as a 200-dimensional space.
For this reason, converting the sum or integral into that on the parameter space $\ParamSpace$ is a promising idea.

One significant establishment in this direction was the asymptotic approximation formula given by Rissanen \cite{Rissanen:1996}.
It can find the value of MC by evaluating the integral of the volume form determined by the Fisher information matrix for many probabilistic models under loose assumptions.
While the asymptotic formula does not give the exact value of the MC, Barron et al.\cite{barron1998minimum} ~provided a scheme to calculate the exact value of the MC for the parametric probabilistic model with sufficient statistics by the sum of a certain function, which was called the g-function later \cite{Hirai+:2013} \cite{yamanishi2023learning}, defined on the parameter space.
Although the original work by Barron et al.\cite{barron1998minimum}~was limited to the case where there exists sufficient statistics, the scheme was extended to general cases by Rissanen \cite{Rissanen:2007:Information} \cite{Rissanen:2012:Optimal}.
The idea of the scheme is to decompose the data space according to the value of the MLE.
Specifically, they consider the following decomposition for discrete parametric probabilistic model cases.
\begin{equation}
\label{eqn:DiscreteDecomposition}
\sum_{\DataValVec \in \DataSpace} \PMF \ab[\ProbabilityMeasure_{\ParamMLEVec \ab (\DataValVec)}] \ab (\DataValVec)
=
\sum_{\ParamVec \in \ParamMLEVec \ab (\DataSpace)} \sum_{\DataValVec \in \ParamMLEVec^{-1} \ab (\ab \{\ParamVec\})} \PMF \ab[\ProbabilityMeasure_{\ParamVec}] \ab (\DataValVec).
\end{equation}
Here, $\ParamMLEVec \ab (\DataSpace) \DefEq \ab \{\ParamMLEVec \ab (\DataValVec) \middle| \DataValVec \in \ParamSpace\} \subset \DataSpace$ is the image of $\DataSpace$ mapped by $\ParamMLEVec$ and $\ParamMLEVec^{-1} \ab (\ab \{\ParamVec\}) \DefEq \ab \{\DataValVec \middle| \ParamMLEVec \ab (\DataValVec) \in \ab \{\ParamVec\}\}$ is the inverse image of the one point set $\ab \{\ParamVec\}$ pulled back by $\ParamMLEVec$.
Simple calculation allows us to see that $\sum_{\DataValVec \in \ParamMLEVec^{-1} \ab (\ab \{\ParamVec\})} \PMF \ab[\ProbabilityMeasure_{\ParamVec}] \ab (\DataValVec) = \PMF \ab[\ParamMLEVec_{\sharp} \ProbabilityMeasure_{\ParamVec}] (\ParamVec)$, where $\PMF \ab[\ParamMLEVec_{\sharp} \ProbabilityMeasure_{\ParamVec}]$ is the PMF of the probability measure $\ParamMLEVec_{\sharp} \ProbabilityMeasure_{\ParamVec}$, which is the probability measure pushforwarded by the map $\ParamMLEVec$ from $\ProbabilityMeasure_{\ParamVec}$.
In other words, $\ParamMLEVec_{\sharp} \ProbabilityMeasure_{\ParamVec}$ represents the distribution of $\ParamMLEVec (\DataRVVec_{\ParamVec})$, where the distribution of the random variable $\DataRVVec_{\ParamVec}$ on $\DataSpace$ is the one determined by $\ProbabilityMeasure_{\ParamVec}$.
To wrap up, we can calculate the MC by evaluating the sum of a function defined using the PMF $\PMF \ab[\ParamMLEVec_{\sharp} \ProbabilityMeasure_{\ParamVec}]$ of the estimator $\ParamMLEVec$ as follows:
\begin{equation}
\label{eqn:DiscreteMC}
\sum_{\DataValVec \in \DataSpace} \PMF \ab[\ProbabilityMeasure_{\ParamMLEVec \ab (\DataValVec)}] \ab (\DataValVec)
=
\sum_{\ParamVec \in \ParamMLEVec \ab (\DataSpace)} \PMF \ab[\ParamMLEVec_{\sharp} \ProbabilityMeasure_{\ParamVec}] (\ParamVec).
\end{equation}
The right-hand side is often easier to evaluate than the left-hand side if the MLE's property is well-known, as in exponential family cases.

While it provides a useful formula, the above discussion is limited to discrete cases. 
One might naturally expect similar discussions to hold true for continuous cases.
Indeed, there have been pieces of previous work (e.g., \cite{Rissanen:2000}, \cite{Hirai&Yamanishi:2013:Efficient} \cite{yamanishi2023detecting}) calculating the MC assuming that the following similar formula holds:
\begin{equation}
\label{eqn:ContinuousGFormula}
\int_{\DataSpace} \PDF \ab[\ProbabilityMeasure_{\ParamMLEVec \ab (\DataValVec)}] \ab (\DataValVec) \odif{\DataValVec}
=
\int_{\ParamMLEVec \ab (\DataSpace)} \PDF \ab[\ParamMLEVec_{\sharp} \ProbabilityMeasure_{\ParamVec}] (\ParamVec) \odif{\ParamVec},
\end{equation}
where $\PDF \ab[\ParamMLEVec_{\sharp} \ProbabilityMeasure_{\ParamVec}]$ is the PDF of the probability measure $\ParamMLEVec_{\sharp} \ProbabilityMeasure_{\ParamVec}$.
This formula is useful since the right-hand side of \eqref{eqn:ContinuousGFormula} is often easy to evaluate (see Example \ref{exm:ExponentialLMC} for a specific calculation example).
However, to the best of our knowledge, \eqref{eqn:ContinuousGFormula} has not been proved. 
The previous studies \cite{barron1998minimum} \cite{Rissanen:2007:Information} \cite{Rissanen:2012:Optimal} deriving the formula \eqref{eqn:DiscreteMC} did not claim that it can apply to the continuous cases, either.
One might expect to prove it the same way by converting the decomposition \eqref{eqn:DiscreteDecomposition} to a continuous version by replacing the sum there with the integral.
Yet, the resulting equation
\begin{equation}
\label{eqn:NaiveDecomposition}
\int_{\DataSpace} \PDF \ab[\ProbabilityMeasure_{\ParamMLEVec \ab (\DataValVec)}] \ab (\DataValVec) \odif{\DataValVec}
=
\int_{\ParamMLEVec \ab (\DataSpace)} \ab(\int_{\ParamMLEVec^{-1} \ab( \ab\{\ParamVec\})} \PDF \ab[\ProbabilityMeasure_{\ParamVec}] \ab(\DataValVec) \odif{\DataValVec}) \odif{\ParamVec} \TagIncorrect
\end{equation}
cannot straightforwardly be interpreted as a correct equation. 
Here, the symbol \Incorrect \ in an equation number in this paper indicates that the equation does not hold in general.
First, for most cases, the set $\ParamMLEVec^{-1} \ab (\ab \{\ParamVec\})$ has measure zero and hence the value of the internal integral $\int_{\ParamMLEVec^{-1} \ab (\ab \{\ParamVec\})} \PDF \ab[\ProbabilityMeasure_{\ParamVec}] \ab (\DataValVec) \odif{\DataValVec}$ is zero.
Take the normal distribution case as an example.
If the data space $\DataSpace$ is the set of sequences of  real numbers
Therefore, even for nontrivial cases where the left-hand side of \eqref{eqn:NaiveDecomposition} is nonzero, the right-hand side is zero. 
This means that \eqref{eqn:NaiveDecomposition} cannot hold in general. 
Another issue is that the left-hand side and the internal integral of the right-hand side \eqref{eqn:NaiveDecomposition} are invariant when we apply the change of the parameter variable $\ParamVec$, but the outer integral is not.
As a result, the left-hand side is independent of but the right-hand side is dependent on the parametrization.
This fact also demonstrates that \eqref{eqn:NaiveDecomposition} cannot hold in general. 
Detailed discussion will be given in Section \ref{sec:Issue}.

Although the formula \eqref{eqn:ContinuousGFormula} has not been proved, it has been used even for continuous parametric probability models (e.g., \cite{Rissanen:2000}\cite{Hirai+:2013}).
Hence, whether \eqref{eqn:ContinuousGFormula} holds or not is a practical question, though it is non-trivial.
Interestingly, some results calculated by \eqref{eqn:ContinuousGFormula} (e.g., that for exponential distributions \cite{Rissanen:2007:Information} \cite{Rissanen:2012:Optimal}) are consistent with the results of a different calculation method using the Fourier transform \cite{Suzuki&Yamanishi:2018:Exact} \cite{suzuki2021fourier}. 
Thus, the above calculation method \eqref{eqn:ContinuousGFormula}, while problematic to prove, is expected to be correct for continuous models under appropriate conditions.
Since the MC calculation method using the Fourier transform \cite{Suzuki&Yamanishi:2018:Exact} \cite{suzuki2021fourier} is more complicated than \eqref{eqn:ContinuousGFormula} in that it involves additional frequency variables, mathematical justification of the simple calculation method \eqref{eqn:ContinuousGFormula} is demanded in this field.
Hence, our research question is the following: ``\Emph{Is the MC calculation method \eqref{eqn:ContinuousGFormula} correct} (while we cannot trivially prove it)?''

This paper answers the above question \Emph{in the affirmative}.
Specifically, we, for the first time, successfully justify the MC calculation formula \eqref{eqn:ContinuousGFormula} that has been applied without theoretical rationales in previous work.

The key idea of our paper is to modify the decomposition \eqref{eqn:NaiveDecomposition}, which does not hold in general, by using the coarea formula, which has been studied in the geometric measurement theory.
According to the coarea formula, we can see that the inner integral of \eqref{eqn:NaiveDecomposition} should have been replaced by that on a Hausdorff measure instead of a Lebesgue measure, and the non-square version of Jacobian determinants should have been inserted in the outer integral to reflect the ratio between the infinitesimal change in parameter space and the infinitesimal in data space.

The contributions of the paper are listed as follows:

\begin{itemize}
    \item This paper reveals that there are issues with past proofs for the formula \eqref{eqn:ContinuousGFormula} to calculate the MC of continuous PPM and that their correction is not obvious.
    \item We provide a specific representation of the PDF of general estimators, including maximum likelihood estimators, where the data follows a continuous probability distribution.
    \item Using the PDF of the estimator given above, we derive a MC calculation formula and show that it agrees with the formula \eqref{eqn:ContinuousGFormula} used in previous work without a valid proof. This justifies for the first time the MC calculations in previous work based on \eqref{eqn:ContinuousGFormula}.
\end{itemize}

The remainder of the paper is organized as follows. 
Section \ref{sec:Definition} provides definitions regarding NML and MC.
Section \ref{sec:Issue} explains in detail the issues that arise when we attempt to apply existing proofs of the MC calculation formula to continuous cases.
Section \ref{sec:Idea} explains our core idea of new proofs of the calculation formula. 
This section includes mathematical preliminaries to state our main results, explaining why those preliminaries are necessary for our goal.
Section \ref{sec:Main} provides our main results, the MC calculation formula for continuous cases with complete proofs.

\section{Notation and Definitions}
\label{sec:Definition}
\subsection{Notation}
In this paper, $\Real$, $\Real_{\ge 0}$, and $\Real_{>0}$ denote the sets of real numbers, non-negative real numbers, and positive real numbers, respectively.
Likewise, $\Integer$, $\Integer_{\ge 0}$, and $\Integer_{>0}$ denote the sets of integers, non-negative integers, and positive integers.
We also define $\overline{\Real} \DefEq \Real \cup \{-\infty,+\infty\}$, $\overline{\Real}_{\ge 0} \DefEq \Real_{\ge 0} \cup \{+\infty\}$, and $\overline{\Real}_{> 0} \DefEq \Real_{> 0} \cup \{+\infty\}$.
For $\DataDim \in \Integer_{>0}$, we denote the $\DataDim$-dimensional real vector space by $\Real^{\DataDim}$ and the Lebesgue measure on $\Real^{\DataDim}$ by $\Lebesgue^{\DataDim}$.
\begin{remark}
In the introduction section, we used the notation $\odif{\DataValVec}$ to denote the Lebesgue integral. However, in the following, we use $\odif{\Lebesgue^{\DataDim} \ab(\DataValVec)}$ to clarify it is the Lebesgue integral since we also use another definition of integral later.
\end{remark}
The symbol \Incorrect \ in an equation number in this paper indicates that the equation does not hold in general.

\subsection{Probabilistic model}
In this paper, we denote the data space by $\DataSpace$ and assume that $\DataSpace \subset \Real^{\DataDim}$ for some $\DataDim \in \Integer_{> 0}$.
In applications, $\DataSpace$ might be the direct product of $\NData$ copies of some data space, where $\NData$ is the length of a data sequence.
For those cases, $\DataSpace$ can be regarded as the data-sequence space.

A set of distributions on an identical data space is called a \NewTerm{probabilistic model}.
Mathematically, a distribution is represented as a probability measure; hence, we use the terms ``distribution'' and ``probability measure'' in the same meaning in this paper.
Thus, we can say that a probabilistic model is a set of probability measures on some data space $\DataSpace \subset \Real^{\DataDim}$.
We parameterize the probability measures in a probabilistic model by a parameter vector $\ParamVec$ in some parameter space $\ParamSpace$.
We assume that $\ParamSpace \subset \Real^{\ParamDim}$ for $\ParamDim \in \Integer_{> 0}$.
The probability measure indicated by $\ParamVec \in \ParamSpace$ is denoted by $\ProbabilityMeasure_{\ParamVec}$.
The probabilistic model is the set $\ProbabilityMeasure_{\ParamSpace} \DefEq \ab\{\ProbabilityMeasure_{\ParamVec}\}_{\ParamVec \in \ParamSpace}$.
When a probabilistic model is parameterized like the above, the probabilistic model is called a \NewTerm{parametric probabilistic model (PPM)}.

In practice, it is sufficient to consider discrete PPMs and continuous PPMs, where the latter is the main focus of this paper.
\begin{definition}[discrete PPM and probability mass function]
A PPM is \NewTerm{discrete} if the data space $\DataSpace$ is at most countably finite.
If a PPM is discrete, for $\ParamVec \in \ParamSpace$, the function $\PMF \ab[\ProbabilityMeasure_{\ParamVec}]: \DataSpace \to \overline{\Real}_{\ge 0}$ defined by $\PMF \ab[\ProbabilityMeasure_{\ParamVec}] \ab(\DataValVec) \DefEq \ProbabilityMeasure_{\ParamVec} \ab(\ab\{\DataValVec\ab\})$ is called the \NewTerm{probability mass function (PMF)} of the probability measure $\ProbabilityMeasure_{\ParamVec}$.
\end{definition}
\begin{definition}[continuous PPM and probability density function]
A PPM is \NewTerm{continuous} if for any $\ParamVec \in \ParamSpace$, there exists a \NewTerm{probability density function (PDF)} $\PDF \ab[\ProbabilityMeasure_{\ParamVec}]: \DataSpace \to \overline{\Real}_{\ge 0}$ such that
\begin{equation}
\ProbabilityMeasure_{\ParamVec} \ab(\Set)
=
\int_{\Set} \PDF \ab[\ProbabilityMeasure_{\ParamVec}] \ab(\DataValVec) \odif{\Lebesgue^{\DataDim} \ab(\DataValVec)}
\end{equation}
for any Lebesgue measurable set $\Set \in \DataSpace$.
\end{definition}
\begin{remark}
Strictly speaking, a probability measure has multiple PDFs in general.
For example, the following two functions $\PDF_{Z}$ and $\PDF'_{Z}$ are both PDFs of the standard normal distribution:
\begin{equation}
\begin{split}
\PDF_{Z} (z) 
&\DefEq 
\frac{1}{\sqrt{2 \pi}} \exp \ab(-\frac{z^2}{2}),
\\
\PDF'_{Z} (z) 
&\DefEq 
\begin{cases}
\frac{1}{\sqrt{2 \pi}} \exp \ab(-\frac{z^2}{2}) \quad & \textrm{if $z \ne 0$},
\\
0 \quad & \textrm{if $z = 0$},
\end{cases}
\end{split}
\end{equation}
because they are equal except on $\{0\}$, which is a set with zero measure.
In the area of normalized maximum likelihood, this is an annoying issue since the likelihood may vary, depending on which PDF to choose. 
For example, the likelihood of $Z=0$ is $\frac{1}{\sqrt{2 \pi}}$ according to $\PDF_{Z}$ and $0$ according to $\PDF'_{Z}$.

Nevertheless, the non-uniqueness of PDFs of a probability measure is not a practical issue in most cases since we can usually select a canonical PDF for each random variable.
For instance, in the above example of the standard normal distribution, we usually select $\PDF_{Z}$ and not $\PDF'_{Z}$.
For this reason, in this paper, we assume that there exists a unique canonical PDF $\PDF \ab[\ProbabilityMeasure_{\ParamVec}]$ for all $\ParamVec \in \ParamSpace$.
We can guarantee the existence of such a canonical PDF if each probability measure has a continuous PDF, according to the following proposition.
\begin{proposition}
Suppose that $\BaseMeasure$ is a strictly positive measure, that is, $\BaseMeasure(\Set)>0$ holds for any open set $\Set$. 
Then, any probability measure $\ProbabilityMeasure$ has a unique continuous PDF if it exists. 
\end{proposition}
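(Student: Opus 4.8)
The statement is a uniqueness assertion: granting that at least one continuous density exists, I claim that any two continuous densities of $\ProbabilityMeasure$ with respect to $\BaseMeasure$ must coincide \emph{everywhere}, not merely almost everywhere. The plan is a two-stage argument. First I would invoke the standard density-uniqueness fact that any two densities of the same measure agree $\BaseMeasure$-almost everywhere; then I would promote this almost-everywhere equality to pointwise equality by combining continuity with the strict positivity of $\BaseMeasure$.

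For the first stage, let $f$ and $g$ be two continuous densities of $\ProbabilityMeasure$ with respect to $\BaseMeasure$, so that $\int_{E} f(\DataValVec) \odif{\BaseMeasure \ab(\DataValVec)} = \ProbabilityMeasure(E) = \int_{E} g(\DataValVec) \odif{\BaseMeasure \ab(\DataValVec)}$ for every measurable set $E \subset \DataSpace$. Since $\ProbabilityMeasure$ is a probability measure, both integrals are finite (at most $1$), so I may subtract to get $\int_{E} (f(\DataValVec) - g(\DataValVec)) \odif{\BaseMeasure \ab(\DataValVec)} = 0$ for every measurable $E$. Choosing $E_{+} \DefEq \{\DataValVec \in \DataSpace : f(\DataValVec) > g(\DataValVec)\}$, on which the integrand is nonnegative, and decomposing $E_{+} = \bigcup_{n \in \Integer_{> 0}} \{\DataValVec : f(\DataValVec) - g(\DataValVec) > 1/n\}$, the vanishing of the integral over each layer forces $\BaseMeasure(\{\DataValVec : f(\DataValVec) - g(\DataValVec) > 1/n\}) = 0$, whence $\BaseMeasure(E_{+}) = 0$; the symmetric choice $E_{-} \DefEq \{\DataValVec : f(\DataValVec) < g(\DataValVec)\}$ yields $\BaseMeasure(E_{-}) = 0$. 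Thus $f = g$ holds $\BaseMeasure$-almost everywhere, and this stage uses neither $\sigma$-finiteness nor strict positivity.

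For the second stage, suppose for contradiction that $f(\DataValVec_{0}) \ne g(\DataValVec_{0})$ at some $\DataValVec_{0} \in \DataSpace$; without loss of generality $f(\DataValVec_{0}) - g(\DataValVec_{0}) = c > 0$. Because $f - g$ is continuous, there is a nonempty open neighborhood $U \ni \DataValVec_{0}$ on which $f - g > c/2 > 0$. Strict positivity then gives $\BaseMeasure(U) > 0$, while $U \subset \{\DataValVec : f(\DataValVec) \ne g(\DataValVec)\}$, a set of $\BaseMeasure$-measure zero by the first stage --- a contradiction. Hence $f = g$ everywhere, establishing uniqueness.

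The routine measure-theoretic content sits in the first stage, whereas the crux of the proposition is the second: strict positivity is exactly the hypothesis that lets continuity upgrade an almost-everywhere identity into a genuine pointwise one, so I expect that bridging ``almost everywhere'' and ``everywhere'' will be the conceptual heart of the argument rather than any computation. The only technical point needing a word of care is the legitimacy of subtracting the two integrals, which is immediate here since $\ProbabilityMeasure$ is finite.
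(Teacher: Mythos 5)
Your proof is correct. The paper states this proposition without providing any proof, and your two-stage argument --- the standard almost-everywhere uniqueness of densities, followed by the upgrade to pointwise equality via continuity of $f-g$ and strict positivity of $\BaseMeasure$ on open sets --- is exactly the canonical argument that fills this gap, so there is nothing to compare it against beyond noting that it is what the authors evidently had in mind.
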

\end{remark}

\subsection{Normalized Maximum Likelihood and Model Complexity}
This paper aspires to calculate the normalized maximum likelihood (NML) code length. In particular, the model complexity, which is one term of the NML code length is the focus of this paper.
We define these amounts in this subsections.
\begin{definition}[normalized maximum likelihood (NML) and model complexity (MC)]
The \NewTerm{normalized maximum likelihood (NML) code length function} $\CodeLen_{\NML} \ab[\ProbabilityMeasure_{\ParamSpace}]: \DataSpace \to \overline{\Real}$ of a discrete or continuous PPM is defined by
\begin{equation}
\label{eqn:NLML}
\CodeLen_{\NML} \ab[\ProbabilityMeasure_{\ParamSpace}] \ab(\DataValVec)
\DefEq
\CodeLen_{\ML} \ab[\ProbabilityMeasure_{\ParamSpace}] \ab(\DataValVec) + \Complexity \ab(\ProbabilityMeasure_{\ParamSpace}),
\end{equation}
where $\CodeLen_{\ML} \ab[\ProbabilityMeasure_{\ParamSpace}]: \DataSpace \to \overline{\Real}$ is the negative-log-maximum-likelihood function defined by
\begin{equation}
\CodeLen_{\ML} \ab[\ProbabilityMeasure_{\ParamSpace}] \ab(\DataValVec)
\DefEq
\begin{cases}
- \log_{b} \ab(\sup_{\ParamVec \in \ParamSpace} \PMF \ab[\ProbabilityMeasure_{\ParamVec}] \ab(\DataValVec)) \quad & \textbf{(discrete)}, \\
- \log_{b} \ab(\sup_{\ParamVec \in \ParamSpace} \PDF \ab[\ProbabilityMeasure_{\ParamVec}] \ab(\DataValVec)) \quad & \textbf{(continuous)}, \\
\end{cases}
\end{equation}
and $\Complexity \ab(\ProbabilityMeasure_{\ParamSpace})$ is the \NewTerm{model complexity (MC)}, also known as the \NewTerm{parametric complexity}, of $\ProbabilityMeasure_{\ParamSpace}$, defined by $\Complexity \ab(\ProbabilityMeasure_{\ParamSpace}) \DefEq \log_{b} \ExponentialComplexity \ab(\ProbabilityMeasure_{\ParamSpace})$, where
\begin{equation}
\begin{split}
& \ExponentialComplexity \ab(\ProbabilityMeasure_{\ParamSpace}) 
\\
& \DefEq
\begin{cases}
\displaystyle \sum_{\DataValVec \in \DataSpace} \ab(\sup_{\ParamVec \in \ParamSpace} \PMF \ab[\ProbabilityMeasure_{\ParamVec}] \ab(\DataValVec)) \quad & \textbf{(discrete)}, \\
\displaystyle \int_{\DataSpace} \ab(\sup_{\ParamVec \in \ParamSpace} \PDF \ab[\ProbabilityMeasure_{\ParamVec}] \ab(\DataValVec)) \odif{\Lebesgue^{\DataDim} \ab(\DataValVec)} \quad & \textbf{(continuous)}. \\
\end{cases}
\end{split}
\end{equation}
Here $b > 0$ is a fixed base of the logarithm.
\end{definition}
\begin{remark}
\begin{enumerate}
    \item In this paper, we fix $b = 2$ for discrete cases and $b = e$ for continuous cases.
    \item Clearly, if we find the value of $\ExponentialComplexity \ab(\ProbabilityMeasure_{\ParamSpace})$, then we can find the value of $\Complexity \ab(\ProbabilityMeasure_{\ParamSpace})$.
    For this reason, we also call $\ExponentialComplexity \ab(\ProbabilityMeasure_{\ParamSpace})$ the model complexity (MC) of $\ProbabilityMeasure_{\ParamSpace}$ in this paper.
\end{enumerate}
\end{remark}
As we have seen, the MC is defined on the maximum likelihood. 
As we aim to convert the integral on the data space in the MC's definition to an integral on the parameter space, the maximum likelihood estimator plays an essential role as it relates the maximum likelihood and the corresponding parameter. 
\begin{definition}[maximum likelihood estimator (MLE)]
A map $\ParamMLEVec: \DataSpace \to \ParamSpace$ is called a \NewTerm{maximum likelihood estimator (MLE)} of a discrete PPM if it satisfies
\begin{equation}
\ParamMLEVec \ab(\DataValVec) = \sup_{\ParamVec \in \ParamSpace} \PMF \ab[\ProbabilityMeasure_{\ParamVec}] \ab(\DataValVec),
\end{equation}
for all $\DataValVec \in \DataSpace$.
Likewise, it is called a MLE of a continuous PPM if it satisfies
\begin{equation}
\ParamMLEVec \ab(\DataValVec) = \sup_{\ParamVec \in \ParamSpace} \PDF \ab[\ProbabilityMeasure_{\ParamVec}] \ab(\DataValVec),
\end{equation}
for all $\DataValVec \in \DataSpace$.
\end{definition}
\begin{proposition}
Suppose that $\ParamMLEVec: \DataSpace \to \ParamSpace$ is a MLE.
Then, the negative-log-maximum-likelihood function 
$\CodeLen_{\ML} \ab[\ProbabilityMeasure_{\ParamSpace}]: \DataSpace \to \overline{\Real}$ is given by
\begin{equation}
\label{eqn:MLENLML}
\CodeLen_{\ML} \ab[\ProbabilityMeasure_{\ParamSpace}] \ab(\DataValVec)
=
\begin{cases}
- \log_{b} \PMF \ab[\ProbabilityMeasure_{\ParamMLEVec \ab(\DataValVec)}] \ab(\DataValVec) \quad & \textbf{(discrete)}, \\
- \log_{b} \PDF \ab[\ProbabilityMeasure_{\ParamMLEVec \ab(\DataValVec)}] \ab(\DataValVec) \quad & \textbf{(continuous)}, \\
\end{cases}
\end{equation}
and the (exponential) model complexity $\ExponentialComplexity \ab(\ProbabilityMeasure_{\ParamSpace})$ is given by
\begin{equation}
\begin{split}
& \ExponentialComplexity \ab(\ProbabilityMeasure_{\ParamSpace}) 
\\
& =
\begin{cases}
\displaystyle \sum_{\DataValVec \in \DataSpace} \PMF \ab[\ProbabilityMeasure_{\ParamMLEVec \ab(\DataValVec)}] \ab(\DataValVec) \quad & \textbf{(discrete)}, \\
\displaystyle \int_{\DataSpace} \PDF \ab[\ProbabilityMeasure_{\ParamMLEVec \ab(\DataValVec)}] \ab(\DataValVec) \odif{\Lebesgue^{\DataDim} \ab(\DataValVec)} \quad & \textbf{(continuous)}. \\
\end{cases}
\end{split}
\end{equation}
\end{proposition}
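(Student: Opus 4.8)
The plan is to prove the identity by \emph{direct substitution}: once the defining property of the MLE is unwound, the statement reduces to replacing the supremum $\sup_{\ParamVec \in \ParamSpace} \PMF \ab[\ProbabilityMeasure_{\ParamVec}] \ab(\DataValVec)$ (resp.\ $\sup_{\ParamVec \in \ParamSpace} \PDF \ab[\ProbabilityMeasure_{\ParamVec}] \ab(\DataValVec)$) appearing in the definitions of $\CodeLen_{\ML} \ab[\ProbabilityMeasure_{\ParamSpace}]$ and $\ExponentialComplexity \ab(\ProbabilityMeasure_{\ParamSpace})$ by the value it attains at the estimate $\ParamMLEVec \ab(\DataValVec)$. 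I read the defining property of a MLE as the assertion that, for every $\DataValVec \in \DataSpace$, the likelihood of the canonical PMF/PDF attains its supremum there, i.e.\ $\PMF \ab[\ProbabilityMeasure_{\ParamMLEVec \ab(\DataValVec)}] \ab(\DataValVec) = \sup_{\ParamVec \in \ParamSpace} \PMF \ab[\ProbabilityMeasure_{\ParamVec}] \ab(\DataValVec)$ in the discrete case and $\PDF \ab[\ProbabilityMeasure_{\ParamMLEVec \ab(\DataValVec)}] \ab(\DataValVec) = \sup_{\ParamVec \in \ParamSpace} \PDF \ab[\ProbabilityMeasure_{\ParamVec}] \ab(\DataValVec)$ in the continuous case. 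The hypothesis that $\ParamMLEVec$ is a MLE guarantees both that the supremum is actually attained, so that the substitution is legitimate, and that a single selection $\ParamMLEVec \ab(\DataValVec)$ realizing it is available.

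First I would treat the $\CodeLen_{\ML}$ identity. Applying $-\log_b$ to both sides of the defining identity yields $-\log_b \ab(\sup_{\ParamVec \in \ParamSpace} \PDF \ab[\ProbabilityMeasure_{\ParamVec}] \ab(\DataValVec)) = -\log_b \PDF \ab[\ProbabilityMeasure_{\ParamMLEVec \ab(\DataValVec)}] \ab(\DataValVec)$, and comparing this with the definition of $\CodeLen_{\ML} \ab[\ProbabilityMeasure_{\ParamSpace}]$ gives \eqref{eqn:MLENLML}; the discrete case is identical with $\PMF$ in place of $\PDF$. This step is purely pointwise and needs no further hypotheses.

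Second I would treat the $\ExponentialComplexity$ identity. The key observation is that the two integrands (resp.\ summands)
\begin{equation*}
\DataValVec \mapsto \sup_{\ParamVec \in \ParamSpace} \PDF \ab[\ProbabilityMeasure_{\ParamVec}] \ab(\DataValVec)
\quad \text{and} \quad
\DataValVec \mapsto \PDF \ab[\ProbabilityMeasure_{\ParamMLEVec \ab(\DataValVec)}] \ab(\DataValVec)
\end{equation*}
agree at \emph{every} point $\DataValVec \in \DataSpace$ by the defining identity above, not merely almost everywhere. Hence their Lebesgue integrals over $\DataSpace$ (resp.\ their sums over the countable $\DataSpace$) coincide verbatim, which is exactly the claimed expression for $\ExponentialComplexity \ab(\ProbabilityMeasure_{\ParamSpace})$. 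Because the equality of integrands is exact and pointwise, I do not need to establish separately the measurability of $\DataValVec \mapsto \PDF \ab[\ProbabilityMeasure_{\ParamMLEVec \ab(\DataValVec)}] \ab(\DataValVec)$: it inherits measurability from $\DataValVec \mapsto \sup_{\ParamVec} \PDF \ab[\ProbabilityMeasure_{\ParamVec}] \ab(\DataValVec)$, whose integral already appears in the definition of $\ExponentialComplexity$.

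The substance of this proposition is thus definitional rather than analytic, and the only point demanding care is the convention fixed in the preceding remark: because a probability measure admits many PDFs, the value $\PDF \ab[\ProbabilityMeasure_{\ParamMLEVec \ab(\DataValVec)}] \ab(\DataValVec)$ is meaningful only once the canonical PDF has been fixed, and the MLE must be defined with respect to that same canonical representative. I expect this consistency-of-convention check to be the main, and only mild, obstacle; with the canonical PDF fixed throughout, the substitution is unambiguous and the proposition follows at once.
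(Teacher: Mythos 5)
Your proof is correct and coincides with the paper's treatment: the paper states this proposition without any proof, regarding it as immediate from the definitions, and your pointwise substitution of the attained supremum (followed by taking $-\log_b$, resp.\ summing/integrating the pointwise-equal functions) is exactly that immediate argument. You also correctly repaired the paper's typographical slip in the MLE definition, which literally equates the parameter vector $\ParamMLEVec \ab(\DataValVec)$ with a scalar supremum; your reading --- that the MLE is a parameter value at which the canonical PMF/PDF attains the supremum --- is the intended one, and it is precisely what legitimizes the substitution.
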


In the following sections, we will discuss how to calculate the MC $\ExponentialComplexity \ab(\ProbabilityMeasure_{\ParamSpace})$.
There, the distribution of an estimator map, including MLE, plays a crucial role.
The probability measure of such a distribution is called the pushforward measure.
\begin{definition}[pushforward measure]
For a measurable map $\ParamMapVec: \DataSpace \to \ParamSpace$ and a probability measure $\ProbabilityMeasure$ on $\DataSpace$, we define the \NewTerm{pushforward measure} $\ParamMapVec_{\sharp} \ProbabilityMeasure$, which is a probability measure on the parameter space $\ParamSpace$, by
\begin{equation}
\ab(\ParamMapVec_{\sharp} \ProbabilityMeasure)
\ab(\Set)
=
\ProbabilityMeasure \ab(\ParamMapVec^{-1} \ab(\Set)),
\end{equation}
where $\Set \in \ParamSpace$ is an arbitrary measurable set.
Note that the right-hand side is defined since $\ParamMapVec^{-1} \ab(\Set)$ is a measurable set in $\DataSpace$, which follows because $\Set$ is a measurable set in $\ParamSpace$ and $\ParamMapVec$ is a measurable map.
\end{definition}
\begin{remark}
The pushforward measure $\ParamMapVec_{\sharp} \ProbabilityMeasure$ is nothing but the probability measure of the distribution that $\ParamMapVec \ab(\DataRVVec)$ follows, where $\DataRVVec$ follows the distribution determined by $\ProbabilityMeasure$.
\end{remark}

\subsection{Luckiness}
In practice, the sum or integral that defines the MC may diverge.
For such cases, we introduce a weight on parameter space, called the \NewTerm{luckiness}.
\begin{definition}[Luckiness NML (LNML) and Luckiness MC (LMC)]
Suppose that $\ParamMLEVec: \DataSpace \to \ParamSpace$ is a MLE and let $\Luckiness: \ParamSpace \to \Real_{\ge 0}$ be a function of a parameter $\ParamVec$.
We define the \NewTerm{luckiness normalized maximum likelihood (LNML) code length function} 
$\CodeLen_{\NML, \Luckiness} \ab[\ProbabilityMeasure_{\ParamSpace}]: \DataSpace \to \overline{\Real}$ of a discrete or continuous PPM is defined by
\begin{equation}
\CodeLen_{\NML, \Luckiness} \ab[\ProbabilityMeasure_{\ParamSpace}] \ab(\DataValVec)
\DefEq
\CodeLen_{\ML} \ab[\ProbabilityMeasure_{\ParamSpace}] \ab(\DataValVec) + \Complexity_{\Luckiness} \ab(\ProbabilityMeasure_{\ParamSpace}),
\end{equation}
where $\CodeLen_{\ML} \ab[\ProbabilityMeasure_{\ParamSpace}]: \DataSpace \to \overline{\Real}$ is the negative-log-maximum-likelihood function defined by \eqref{eqn:NLML} or \eqref{eqn:MLENLML}
and $\Complexity_{\Luckiness} \ab(\ProbabilityMeasure_{\ParamSpace})$ is defined by $\Complexity_{\Luckiness} \ab(\ProbabilityMeasure_{\ParamSpace}) \DefEq \log_{b} \ExponentialComplexity_{\Luckiness} \ab(\ProbabilityMeasure_{\ParamSpace})$, where
\begin{equation}
\begin{split}
& \ExponentialComplexity_{\Luckiness} \ab(\ProbabilityMeasure_{\ParamSpace}) 
\\
& \DefEq
\begin{cases}
\displaystyle \sum_{\DataValVec \in \DataSpace} \PMF \ab[\ProbabilityMeasure_{\ParamMLEVec \ab(\DataValVec)}] \ab(\DataValVec) \Luckiness \ab(\ParamMLEVec \ab(\DataValVec)) \quad & \textbf{(discrete)}, \\
\displaystyle \int_{\DataSpace} \PDF \ab[\ProbabilityMeasure_{\ParamMLEVec \ab(\DataValVec)}] \ab(\DataValVec) \Luckiness \ab(\ParamMLEVec \ab(\DataValVec)) \odif{\Lebesgue^{\DataDim} \ab(\DataValVec)} \quad & \textbf{(continuous)}. \\
\end{cases}
\end{split}
\end{equation}
We call $\Luckiness$ the \NewTerm{luckiness function} and $\Complexity_{\Luckiness} \ab(\ProbabilityMeasure_{\ParamSpace})$ the \NewTerm{luckiness model complexity (LMC)} of the PPM $\ProbabilityMeasure_{\ParamSpace}$.
Here $b > 0$ is a fixed base of the logarithm.
\end{definition}
\begin{remark}
Define a constant function $1_{\ParamSpace}: \ParamSpace \to \Real$ by $1_{\ParamSpace} \ab(\ParamVec) = 1$. Then, the LNML $\CodeLen_{\NML, 1_{\ParamSpace}} \ab[\ProbabilityMeasure_{\ParamSpace}]$ and LMC $\Complexity_{1_{\ParamSpace}} \ab(\ProbabilityMeasure_{\ParamSpace})$ equal the NML $\CodeLen_{\NML} \ab[\ProbabilityMeasure_{\ParamSpace}]$ and MC $\Complexity \ab(\ProbabilityMeasure_{\ParamSpace})$, respectively.
In this sense, the LNML and LMC are generalizations of the NML and MC, respectively.
Hence, we discuss the LNML and LMC in the remainder of the paper.
We can always obtain the discussion on the NML and MC by substituting $\Luckiness = 1_{\ParamSpace}$. 

We often consider LNML and LMC when we want to limit the parameter space to its subspace for defining the MC as the MC would diverge to infinity otherwise.
Specifically, if we limit the parameter space for defining the MC to $\Set$, we use the indicator function $1_{\Set}$ as a luckiness function, where $1_{\Set}: \ParamSpace \to \Real_{\ge 0}$ is defined by
\begin{equation}
1_{\Set} \ab(\ParamVec)
\DefEq
\begin{cases}
1 & \quad \textrm{if $\ParamVec \in \Set$},\\
0 & \quad \textrm{if $\ParamVec \notin \Set$}.\\
\end{cases}
\end{equation}
\end{remark}

\section{Limitation of Previous Work's Derivation}
\label{sec:Issue}

Non-asymptotic formulae to calculate the MC of a discrete PPM have been derived by \cite{barron1998minimum} and \cite{Rissanen:2007:Information} \cite{Rissanen:2012:Optimal}.
In this section, we review those derivations and clarify why we cannot modify those derivations to continuous PPM cases straightforwardly.
Note that we omit the discussion on luckiness for simplicity, while a similar discussion holds even with luckiness.
For both derivations, the following decomposition is a presumption.
\begin{lemma}
\label{lem:DiscreteGeneralDecomposition}
Suppose that $\DataSpace$ is at most countably infinite set, $\FuncIII: \DataSpace \to \Real$ is a function on $\DataSpace$, and $\ParamMapVec: \DataSpace \to \ParamVec$ is an estimator map.
Then, we have the following:
\begin{equation}
\begin{split}
\sum_{\DataValVec \in \DataSpace} \FuncIII \ab (\DataValVec)
& =
\sum_{\ParamVec \in \ParamMapVec \ab (\DataSpace)} \sum_{\DataValVec \in \ParamMapVec^{-1} \ab (\ab \{\ParamVec\})} \FuncIII \ab (\DataValVec),    
\end{split}
\end{equation}
if either hand of the equation absolutely converges, i.e., if $\sum_{\DataValVec \in \DataSpace} \ab|\FuncIII \ab (\DataValVec)|$ converges or if $\sum_{\ParamVec \in \ParamMapVec \ab (\DataSpace)} \sum_{\DataValVec \in \ParamMapVec^{-1} \ab (\ab \{\ParamVec\})} \ab|\FuncIII \ab (\DataValVec)|$ converges.
\end{lemma}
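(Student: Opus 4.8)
The plan is to recognize the claimed identity as nothing more than the associativity (regrouping) property of an absolutely convergent series over a countable index set, exploiting the fact that the fibers of $\ParamMapVec$ partition $\DataSpace$. The whole content of the lemma is that for countable sums, grouping terms into blocks is harmless under absolute convergence; the only care needed is to reconcile the two convergence hypotheses and to justify the regrouping for signed terms.

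First I would record the partition. Since $\ParamMapVec$ is a single-valued map, every $\DataValVec \in \DataSpace$ lies in the fiber over $\ParamMapVec \ab(\DataValVec)$ and in no other, while $\ParamMapVec \ab(\DataSpace)$ indexes exactly the nonempty fibers. Hence
\[ \DataSpace = \bigsqcup_{\ParamVec \in \ParamMapVec \ab(\DataSpace)} \ParamMapVec^{-1} \ab(\ab\{\ParamVec\}) \]
is a disjoint decomposition. Because $\DataSpace$ is at most countable, the index set $\ParamMapVec \ab(\DataSpace)$ and every fiber $\ParamMapVec^{-1} \ab(\ab\{\ParamVec\})$ are at most countable as well.

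Next I would dispose of the two apparently different convergence hypotheses by showing they coincide. Applied to the non-negative function $\ab|\FuncIII|$, the regrouping identity holds unconditionally in $[0,+\infty]$,
\[ \sum_{\DataValVec \in \DataSpace} \ab|\FuncIII \ab(\DataValVec)| = \sum_{\ParamVec \in \ParamMapVec \ab(\DataSpace)} \sum_{\DataValVec \in \ParamMapVec^{-1} \ab(\ab\{\ParamVec\})} \ab|\FuncIII \ab(\DataValVec)|, \]
this being the Tonelli (monotone) statement for series of non-negative terms, in which grouping along a partition never changes the possibly infinite total. Consequently the single sum and the iterated sum of $\ab|\FuncIII|$ are finite simultaneously, so the phrase ``either hand absolutely converges'' is in fact one and the same hypothesis.

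Finally, assuming this common value is finite, I would split $\FuncIII = \FuncIII^{+} - \FuncIII^{-}$ into positive and negative parts, each dominated by $\ab|\FuncIII|$ and hence summable, apply the preceding non-negative identity separately to $\FuncIII^{+}$ and $\FuncIII^{-}$, and subtract the two finite totals to obtain the claim for $\FuncIII$. Equivalently, one regards $\FuncIII$ as a summable family over the countable set $\DataSpace$ and invokes the standard theorem that such a family may be regrouped along any partition without altering its sum. The only genuinely delicate point is the clean invocation of this regrouping theorem (equivalently, Tonelli and Fubini with respect to the counting measure), and I expect the write-up to consist almost entirely of setting up that standard machinery rather than any new idea. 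This very harmlessness of grouping countable sums is the whole point of the lemma: it has no valid analogue once the sums become integrals, which is precisely why the naive continuous decomposition \eqref{eqn:NaiveDecomposition} fails and the coarea formula is needed.
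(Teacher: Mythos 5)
Your proposal is correct and follows essentially the same route as the paper, which dispenses with the lemma in one sentence by observing that it is merely a regrouping of an absolutely convergent series along the fibers of $\ParamMapVec$, harmless under absolute convergence. Your write-up simply makes that observation rigorous (Tonelli for the non-negative case to unify the two hypotheses, then the positive/negative-part split), which is a faithful expansion of the paper's argument rather than a different one.
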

\begin{remark}
In Lemma \ref{lem:DiscreteGeneralDecomposition}, since we assumed that $\DataSpace$ is at most countably infinite, $\ParamMapVec \ab(\DataSpace) \DefEq \ab\{\ParamMapVec \ab(\DataValVec) \middle| \DataValVec \in \DataSpace\}$ and $\ParamMapVec^{-1} \ab(\ab\{\ParamVec\}) \DefEq \ab\{\DataValVec \middle|\ParamMapVec \ab(\DataValVec) \in \ab\{\ParamVec\} \}$ are both at most countable infinite.
Hence, all sums in Lemma \ref{lem:DiscreteGeneralDecomposition} are defined as the sum of a series.
In particular, it does not include the sum of an uncountable set, whose definition is non-trivial.
\end{remark}
Lemma \ref{lem:DiscreteGeneralDecomposition} itself is trivial since it only performs a reordering operation on a (at most) infinite series, and any reordering operation on an absolutely convergent infinite series preserves its value.

Below, we will further explain how the above decomposition was used to derive the MC calculation formula for discrete cases in \cite{barron1998minimum} and \cite{Rissanen:2007:Information} \cite{Rissanen:2012:Optimal} and why we cannot trivially convert the discussion into continuous cases.

\subsection{Derivation by Barron et al.~\cite{barron1998minimum}}
The MC calculation formula by Barron et al.~\cite{barron1998minimum}\footnote{For convenience, we call the derivation here Barron et al.'s derivation and the other derivation introduced in the next subsection Rissanen's derivation. However, we also remark that Rissanen is also in the author list of the original paper of the derivation introduced in this subsection.} can apply to a discrete PPM with sufficient statistics.
Suppose that a discrete PPM has sufficient statistics $\SufficientStatVec: \DataSpace \to \Real^{\ParamDim_{\SufficientSymbol}}$.
Then, conditional PMF $\PMF \ab[\ProbabilityMeasure_{\ParamVec}|\SufficientStatVec_{\sharp} \ProbabilityMeasure_{\ParamVec}]$ defined by 
\begin{equation}
\begin{split}
\PMF \ab[\ProbabilityMeasure_{\ParamVec}|\SufficientStatVec_{\sharp} \ProbabilityMeasure_{\ParamVec}] \ab (\DataValVec|\SufficientValVec) 
& =
\frac{\ProbabilityMeasure_{\ParamVec} \ab (\ab \{\DataValVec\} \cap \SufficientStatVec^{-1} \ab (\ab \{\SufficientValVec\}))}{\ProbabilityMeasure_{\ParamVec} \ab (\SufficientStatVec^{-1} \ab (\ab \{\SufficientValVec\}))}
\\
& =
\frac{\Probability (\DataRVVec_{\ParamVec} = \DataValVec \land \SufficientStatVec (\DataRVVec_{\ParamVec}) = \SufficientValVec)}{\Probability (\SufficientStatVec (\DataRVVec) = \SufficientValVec)}
\end{split}
\end{equation}
does not depend on $\ParamVec$.
Hence, there exists a bivariate function $\PMF \ab[\ProbabilityMeasure|\SufficientStatVec_{\sharp} \ProbabilityMeasure]$ that satisfies 
\begin{equation}
\PMF \ab[\ProbabilityMeasure_{\ParamVec}|\SufficientStatVec_{\sharp} \ProbabilityMeasure_{\ParamVec}] \ab (\DataValVec|\SufficientValVec) 
=
\PMF \ab[\ProbabilityMeasure|\SufficientStatVec_{\sharp} \ProbabilityMeasure] \ab (\DataValVec|\SufficientValVec) 
\end{equation}
for all $\ParamVec \in \ParamSpace$, $\DataValVec \in \DataSpace$, and $\SufficientValVec \in \SufficientStatVec (\DataSpace)$.

The following two Lemmata are the starting point of the discussion in \cite{barron1998minimum}.

\begin{lemma}
\label{lem:DiscreteSufficientStatDecomposition}
Suppose that $\ab \{\ProbabilityMeasure_{\ParamVec}\}_{\ParamVec \in \ParamSpace}$ is a PPM and $\SufficientStatVec: \DataSpace \to \Real^{\ParamDim_{\SufficientSymbol}}$ is a sufficient statistic of $\ab \{\ProbabilityMeasure_{\ParamVec}\}_{\ParamVec \in \ParamSpace}$.
Then, we can decompose the PMF $\PMF \ab[\ProbabilityMeasure_{\ParamVec}]$ as follows:
\begin{equation}
\PMF \ab[\ProbabilityMeasure_{\ParamVec}] (\DataValVec)
=
\PMF \ab[\ProbabilityMeasure|\SufficientStatVec_{\sharp} \ProbabilityMeasure] \ab (\DataValVec|\SufficientStatVec (\DataValVec))
\PMF \ab[\SufficientStatVec_{\sharp} \ProbabilityMeasure_{\ParamVec}] (\SufficientStatVec (\DataValVec)).
\end{equation}
Here, $\PMF \ab[\SufficientStatVec_{\sharp} \ProbabilityMeasure_{\ParamVec}]$ is the PMF of the probability measure $\SufficientStatVec_{\sharp} \ProbabilityMeasure_{\ParamVec}$, which is push-forwarded by $\SufficientStatVec$ from $\ProbabilityMeasure_{\ParamVec}$; in other words, $\SufficientStatVec_{\sharp} \ProbabilityMeasure_{\ParamVec}$ is the probability measure of the random variable $\SufficientStatVec (\DataRVVec_{\ParamVec})$, where the probability measure determined by $\DataRVVec_{\ParamVec}$ is $\ProbabilityMeasure_{\ParamVec}$.
\end{lemma}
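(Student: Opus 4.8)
The plan is to recognize this lemma as the measure-theoretic form of the Fisher--Neyman factorization criterion and to derive it directly from the definition of the conditional PMF already introduced above. The central observation is that evaluating the conditional PMF $\PMF \ab[\ProbabilityMeasure_{\ParamVec}|\SufficientStatVec_{\sharp} \ProbabilityMeasure_{\ParamVec}]$ at the particular second argument $\SufficientValVec = \SufficientStatVec \ab(\DataValVec)$ collapses its definition, because $\DataValVec$ necessarily belongs to the fibre $\SufficientStatVec^{-1} \ab(\ab\{\SufficientStatVec \ab(\DataValVec)\})$ over its own statistic value.

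First, I would substitute $\SufficientValVec = \SufficientStatVec \ab(\DataValVec)$ into the defining ratio
\begin{equation}
\PMF \ab[\ProbabilityMeasure_{\ParamVec}|\SufficientStatVec_{\sharp} \ProbabilityMeasure_{\ParamVec}] \ab (\DataValVec|\SufficientValVec)
=
\frac{\ProbabilityMeasure_{\ParamVec} \ab (\ab \{\DataValVec\} \cap \SufficientStatVec^{-1} \ab (\ab \{\SufficientValVec\}))}{\ProbabilityMeasure_{\ParamVec} \ab (\SufficientStatVec^{-1} \ab (\ab \{\SufficientValVec\}))}
\end{equation}
and exploit the set identity $\ab\{\DataValVec\} \cap \SufficientStatVec^{-1} \ab(\ab\{\SufficientStatVec \ab(\DataValVec)\}) = \ab\{\DataValVec\}$, which holds because $\SufficientStatVec \ab(\DataValVec) \in \ab\{\SufficientStatVec \ab(\DataValVec)\}$ forces $\DataValVec \in \SufficientStatVec^{-1} \ab(\ab\{\SufficientStatVec \ab(\DataValVec)\})$. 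The numerator then becomes $\ProbabilityMeasure_{\ParamVec} \ab(\ab\{\DataValVec\}) = \PMF \ab[\ProbabilityMeasure_{\ParamVec}] \ab(\DataValVec)$, while the denominator is $\ProbabilityMeasure_{\ParamVec} \ab(\SufficientStatVec^{-1} \ab(\ab\{\SufficientStatVec \ab(\DataValVec)\})) = \ab(\SufficientStatVec_{\sharp} \ProbabilityMeasure_{\ParamVec}) \ab(\ab\{\SufficientStatVec \ab(\DataValVec)\}) = \PMF \ab[\SufficientStatVec_{\sharp} \ProbabilityMeasure_{\ParamVec}] \ab(\SufficientStatVec \ab(\DataValVec))$ by the definition of the pushforward measure and its PMF. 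Cross-multiplying yields $\PMF \ab[\ProbabilityMeasure_{\ParamVec}] \ab(\DataValVec) = \PMF \ab[\ProbabilityMeasure_{\ParamVec}|\SufficientStatVec_{\sharp} \ProbabilityMeasure_{\ParamVec}] \ab(\DataValVec|\SufficientStatVec \ab(\DataValVec)) \, \PMF \ab[\SufficientStatVec_{\sharp} \ProbabilityMeasure_{\ParamVec}] \ab(\SufficientStatVec \ab(\DataValVec))$. Finally, since $\SufficientStatVec$ is sufficient, the conditional factor does not depend on $\ParamVec$ and may be replaced by $\PMF \ab[\ProbabilityMeasure|\SufficientStatVec_{\sharp} \ProbabilityMeasure]$, giving the claimed decomposition.

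The main obstacle to watch for is the degenerate case in which the denominator $\PMF \ab[\SufficientStatVec_{\sharp} \ProbabilityMeasure_{\ParamVec}] \ab(\SufficientStatVec \ab(\DataValVec))$ vanishes, so that the conditional PMF is a formal $0/0$ and the cross-multiplication above is not literally valid. Here I would argue separately: if the fibre has zero mass then $\ProbabilityMeasure_{\ParamVec} \ab(\ab\{\DataValVec\}) \le \ProbabilityMeasure_{\ParamVec} \ab(\SufficientStatVec^{-1} \ab(\ab\{\SufficientStatVec \ab(\DataValVec)\})) = 0$ by monotonicity of the measure, so $\PMF \ab[\ProbabilityMeasure_{\ParamVec}] \ab(\DataValVec) = 0$ as well, and both sides of the decomposition equal $0$ under the convention that the product is $0$ whenever the pushforward factor is $0$ (the conditional factor being immaterial on this null fibre). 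With this convention fixed, the decomposition holds for every $\DataValVec \in \DataSpace$ and $\ParamVec \in \ParamSpace$, completing the proof.
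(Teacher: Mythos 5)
Your proof is correct, but there is nothing in the paper to compare it against: the paper states this lemma without proof, presenting it (together with Lemma \ref{lem:SufficientStatMLE}) as a starting point imported from Barron et al., i.e., as the standard discrete factorization property of sufficient statistics; the paper's proof work in that section goes entirely into Theorem \ref{thm:BarronG}, which merely uses the lemma. Your derivation supplies exactly the computation the paper leaves implicit. Substituting $\SufficientValVec = \SufficientStatVec \ab(\DataValVec)$ into the paper's defining ratio for the conditional PMF, the set identity $\ab\{\DataValVec\} \cap \SufficientStatVec^{-1} \ab(\ab\{\SufficientStatVec \ab(\DataValVec)\}) = \ab\{\DataValVec\}$ collapses the numerator to $\PMF \ab[\ProbabilityMeasure_{\ParamVec}] \ab(\DataValVec)$, the denominator equals $\PMF \ab[\SufficientStatVec_{\sharp} \ProbabilityMeasure_{\ParamVec}] \ab(\SufficientStatVec \ab(\DataValVec))$ by the definition of the pushforward measure, and sufficiency (as formulated in the paper, $\ParamVec$-independence of the conditional ratio) licenses replacing the conditional factor by the bivariate function $\PMF \ab[\ProbabilityMeasure|\SufficientStatVec_{\sharp} \ProbabilityMeasure]$. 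Your separate treatment of the degenerate fibre is a genuine addition rather than pedantry: the paper's definition of the conditional PMF tacitly assumes the fibre $\SufficientStatVec^{-1} \ab(\ab\{\SufficientValVec\})$ has positive mass, and your monotonicity argument $\PMF \ab[\ProbabilityMeasure_{\ParamVec}] \ab(\DataValVec) \le \ProbabilityMeasure_{\ParamVec} \ab(\SufficientStatVec^{-1} \ab(\ab\{\SufficientStatVec \ab(\DataValVec)\})) = 0$ shows both sides vanish there, so the decomposition holds for every $\DataValVec$ once the $0$-times-anything convention is fixed.
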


\begin{lemma}
\label{lem:SufficientStatMLE}
Suppose that $\ab \{\ProbabilityMeasure_{\ParamVec}\}_{\ParamVec \in \ParamSpace}$ is a discrete PPM and $\SufficientStatVec: \DataSpace \to \Real^{\ParamDim_{\SufficientSymbol}}$ is a sufficient statistic of $\ab \{\ProbabilityMeasure_{\ParamVec}\}_{\ParamVec \in \ParamSpace}$.
Then, there exists a function $\ParamMLEVec_{\SufficientStatVec}: \SufficientStatVec (\DataSpace) \to \ParamSpace$ such that $\ParamMLEVec_{\SufficientStatVec} \ab (\SufficientStatVec \ab (\DataValVec)) = \ParamMLEVec \ab (\DataValVec)$ holds for all $\DataValVec \in \DataSpace$.
\end{lemma}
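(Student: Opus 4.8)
The plan is to exploit the factorization of Lemma~\ref{lem:DiscreteSufficientStatDecomposition} to show that the likelihood $\ParamVec \mapsto \PMF \ab[\ProbabilityMeasure_{\ParamVec}] \ab(\DataValVec)$ depends on the datum $\DataValVec$ only through its sufficient statistic, up to a factor that is constant in $\ParamVec$, so that the maximizer over $\ParamVec$ can be read off from $\SufficientStatVec \ab(\DataValVec)$ alone. First I would fix $\SufficientValVec \in \SufficientStatVec \ab(\DataSpace)$ and, for an arbitrary $\DataValVec$ with $\SufficientStatVec \ab(\DataValVec) = \SufficientValVec$, apply Lemma~\ref{lem:DiscreteSufficientStatDecomposition} to write
\begin{equation}
\PMF \ab[\ProbabilityMeasure_{\ParamVec}] \ab(\DataValVec)
=
\PMF \ab[\ProbabilityMeasure|\SufficientStatVec_{\sharp} \ProbabilityMeasure] \ab(\DataValVec|\SufficientValVec)\,
\PMF \ab[\SufficientStatVec_{\sharp} \ProbabilityMeasure_{\ParamVec}] \ab(\SufficientValVec).
\end{equation}
The first factor is, by the definition of a sufficient statistic, independent of $\ParamVec$, while the second factor depends on $\DataValVec$ only through $\SufficientValVec$.

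The next step is to pass from this identity to the set of maximizers. Provided the conditional factor $\PMF \ab[\ProbabilityMeasure|\SufficientStatVec_{\sharp} \ProbabilityMeasure] \ab(\DataValVec|\SufficientValVec)$ is strictly positive, scaling the $\ParamVec$-dependent factor by a positive constant leaves the location of the maximum unchanged, so
\begin{equation}
\argmax_{\ParamVec \in \ParamSpace} \PMF \ab[\ProbabilityMeasure_{\ParamVec}] \ab(\DataValVec)
=
\argmax_{\ParamVec \in \ParamSpace} \PMF \ab[\SufficientStatVec_{\sharp} \ProbabilityMeasure_{\ParamVec}] \ab(\SufficientValVec).
\end{equation}
The right-hand side is a subset of $\ParamSpace$ that depends on $\DataValVec$ only through $\SufficientValVec = \SufficientStatVec \ab(\DataValVec)$. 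I would then define $\ParamMLEVec_{\SufficientStatVec} \ab(\SufficientValVec)$ to be a selected element of this common maximizer set (invoking a choice principle when it is not a singleton) and set $\ParamMLEVec \DefEq \ParamMLEVec_{\SufficientStatVec} \circ \SufficientStatVec$; by construction $\ParamMLEVec_{\SufficientStatVec} \ab(\SufficientStatVec \ab(\DataValVec))$ then lies in $\argmax_{\ParamVec \in \ParamSpace} \PMF \ab[\ProbabilityMeasure_{\ParamVec}] \ab(\DataValVec)$, hence is a legitimate MLE value, yielding the claimed factorization $\ParamMLEVec_{\SufficientStatVec} \ab(\SufficientStatVec \ab(\DataValVec)) = \ParamMLEVec \ab(\DataValVec)$.

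The main obstacle, and what makes the statement more than a triviality, is the well-definedness of $\ParamMLEVec_{\SufficientStatVec}$: an arbitrary MLE map need not be constant on the fibers $\SufficientStatVec^{-1} \ab(\ab\{\SufficientValVec\})$ when maximizers fail to be unique, so the factorization cannot simply be extracted from a prescribed $\ParamMLEVec$ but must instead be produced by choosing the MLE compatibly with $\SufficientStatVec$, exactly as above. A secondary technical point is the degenerate case $\PMF \ab[\ProbabilityMeasure|\SufficientStatVec_{\sharp} \ProbabilityMeasure] \ab(\DataValVec|\SufficientValVec) = 0$, in which $\PMF \ab[\ProbabilityMeasure_{\ParamVec}] \ab(\DataValVec)$ vanishes for every $\ParamVec$; there every parameter is trivially a maximizer, so the selected $\ParamMLEVec_{\SufficientStatVec} \ab(\SufficientValVec)$ remains a valid choice, and I would remark that such data carry zero probability under every $\ProbabilityMeasure_{\ParamVec}$ and hence are immaterial to the downstream MC computation.
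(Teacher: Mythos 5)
Your proof is correct, and it fills a gap rather than paralleling anything: the paper states Lemma \ref{lem:SufficientStatMLE} without proof (it is imported from the outline of Barron et al.~\cite{barron1998minimum}), so there is no in-paper argument to compare against. Your route --- the factorization of Lemma \ref{lem:DiscreteSufficientStatDecomposition}, invariance of the maximizer set under multiplication by a positive constant that is independent of $\ParamVec$, and then a selection defining $\ParamMLEVec_{\SufficientStatVec}$ on $\SufficientStatVec \ab(\DataSpace)$ --- is the natural one, and it uses exactly the machinery the paper has already set up. The subtlety you flag is genuine and worth recording: read literally, with $\ParamMLEVec$ an arbitrary prescribed MLE, the lemma is false when likelihood maximizers are non-unique, because nothing forces such an MLE to be constant on a fiber $\SufficientStatVec^{-1} \ab(\ab\{\SufficientValVec\})$; the statement becomes true only under uniqueness of maximizers, or when the MLE is constructed compatibly as $\ParamMLEVec = \ParamMLEVec_{\SufficientStatVec} \circ \SufficientStatVec$, as you do. That repaired reading suffices for the paper's purposes, since the proof of Theorem \ref{thm:BarronG} uses the MLE only through the substitution $\ParamMLEVec \ab(\DataValVec) = \ParamMLEVec_{\SufficientStatVec} \ab(\SufficientStatVec \ab(\DataValVec))$. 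One small tightening of your degenerate case: if the conditional factor vanishes at every point of a fiber, the set $\argmax_{\ParamVec \in \ParamSpace} \PMF \ab[\SufficientStatVec_{\sharp} \ProbabilityMeasure_{\ParamVec}] \ab(\SufficientValVec)$ may itself be empty (that supremum need not be attained), so on such fibers $\ParamMLEVec_{\SufficientStatVec} \ab(\SufficientValVec)$ should simply be taken to be an arbitrary parameter rather than an element of that set; your observation that every parameter maximizes the identically zero likelihood there already justifies this choice.
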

Using the above Lemmata, \cite{barron1998minimum} derived a formula to calculate the MC as follows.
\begin{theorem}
\label{thm:BarronG}
Suppose that $\ab \{\ProbabilityMeasure_{\ParamVec}\}_{\ParamVec \in \ParamSpace}$ is a discrete PPM and $\SufficientStatVec: \DataSpace \to \Real^{\ParamDim_{\SufficientSymbol}}$ is a sufficient statistic of $\ab \{\ProbabilityMeasure_{\ParamVec}\}_{\ParamVec \in \ParamSpace}$.
Then, the following holds:
\begin{equation}
\sum_{\DataValVec \in \DataSpace} \PMF \ab[\ProbabilityMeasure_{\ParamMLEVec \ab (\DataValVec)}] \ab (\DataValVec)
=
\sum_{\SufficientValVec \in \SufficientStatVec \ab (\DataSpace)} \PMF \ab[\SufficientStatVec_{\sharp} \ProbabilityMeasure_{\ParamMLEVec_{\SufficientStatVec} \ab (\SufficientValVec)}] \ab (\SufficientValVec),
\end{equation}
where $\PMF \ab[\ParamMLEVec_{\sharp} \ProbabilityMeasure_{\ParamVec}]$ is the PMF of the probability measure $\ParamMLEVec_{\sharp} \ProbabilityMeasure_{\ParamVec}$, which is the probability measure pushforwarded by the map $\ParamMLEVec_{\sharp}$ from $\ProbabilityMeasure_{\ParamVec}$.
\end{theorem}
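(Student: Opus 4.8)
The plan is to reduce the left-hand side to a sum over $\SufficientStatVec \ab(\DataSpace)$ by grouping the data points according to the fiber of $\SufficientStatVec$ they belong to, and then to collapse each fiber sum to a single term using Lemmata~\ref{lem:DiscreteSufficientStatDecomposition} and~\ref{lem:SufficientStatMLE}. Concretely, I would invoke Lemma~\ref{lem:DiscreteGeneralDecomposition} with its estimator map taken to be the sufficient statistic $\SufficientStatVec$ itself and with $\FuncIII \ab(\DataValVec) \DefEq \PMF \ab[\ProbabilityMeasure_{\ParamMLEVec \ab(\DataValVec)}] \ab(\DataValVec)$, which yields
\[
\sum_{\DataValVec \in \DataSpace} \PMF \ab[\ProbabilityMeasure_{\ParamMLEVec \ab(\DataValVec)}] \ab(\DataValVec)
=
\sum_{\SufficientValVec \in \SufficientStatVec \ab(\DataSpace)} \sum_{\DataValVec \in \SufficientStatVec^{-1} \ab(\ab\{\SufficientValVec\})} \PMF \ab[\ProbabilityMeasure_{\ParamMLEVec \ab(\DataValVec)}] \ab(\DataValVec).
\]
Because every summand is a probability mass and hence non-negative, the convergence hypothesis of Lemma~\ref{lem:DiscreteGeneralDecomposition} causes no trouble: either both sides are finite, so absolute convergence holds automatically, or both are $+\infty$ and the identity holds in $\overline{\Real}_{\ge 0}$.

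Next I would evaluate the inner sum for a fixed $\SufficientValVec \in \SufficientStatVec \ab(\DataSpace)$. On the fiber $\SufficientStatVec^{-1} \ab(\ab\{\SufficientValVec\})$ the statistic is constant, $\SufficientStatVec \ab(\DataValVec) = \SufficientValVec$, so Lemma~\ref{lem:SufficientStatMLE} forces the maximum likelihood estimate to be constant there as well, namely $\ParamMLEVec \ab(\DataValVec) = \ParamMLEVec_{\SufficientStatVec} \ab(\SufficientStatVec \ab(\DataValVec)) = \ParamMLEVec_{\SufficientStatVec} \ab(\SufficientValVec)$. Writing $\ParamVec_{\SufficientValVec} \DefEq \ParamMLEVec_{\SufficientStatVec} \ab(\SufficientValVec)$ and applying the factorization of Lemma~\ref{lem:DiscreteSufficientStatDecomposition}, each summand becomes $\PMF \ab[\ProbabilityMeasure|\SufficientStatVec_{\sharp} \ProbabilityMeasure] \ab(\DataValVec|\SufficientValVec)\, \PMF \ab[\SufficientStatVec_{\sharp} \ProbabilityMeasure_{\ParamVec_{\SufficientValVec}}] \ab(\SufficientValVec)$. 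The marginal factor $\PMF \ab[\SufficientStatVec_{\sharp} \ProbabilityMeasure_{\ParamVec_{\SufficientValVec}}] \ab(\SufficientValVec)$ is independent of $\DataValVec$, so it factors out of the inner sum, leaving it multiplied by $\sum_{\DataValVec \in \SufficientStatVec^{-1} \ab(\ab\{\SufficientValVec\})} \PMF \ab[\ProbabilityMeasure|\SufficientStatVec_{\sharp} \ProbabilityMeasure] \ab(\DataValVec|\SufficientValVec)$.

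The step I expect to demand the most care is the claim that this remaining conditional sum equals one. Here the probabilistic meaning of the conditional mass function is essential: for each fixed $\SufficientValVec$, the function $\PMF \ab[\ProbabilityMeasure|\SufficientStatVec_{\sharp} \ProbabilityMeasure] \ab(\cdot|\SufficientValVec)$ is the mass function of the conditional distribution of $\DataRVVec_{\ParamVec}$ given $\SufficientStatVec \ab(\DataRVVec_{\ParamVec}) = \SufficientValVec$, which by sufficiency does not depend on $\ParamVec$ and is supported precisely on the fiber $\SufficientStatVec^{-1} \ab(\ab\{\SufficientValVec\})$; summing its mass over that fiber therefore gives $1$. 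The one degenerate case to dispatch is $\PMF \ab[\SufficientStatVec_{\sharp} \ProbabilityMeasure_{\ParamVec_{\SufficientValVec}}] \ab(\SufficientValVec) = 0$, where the conditioning is not canonically defined; but then the factorization of Lemma~\ref{lem:DiscreteSufficientStatDecomposition} already forces every summand $\PMF \ab[\ProbabilityMeasure_{\ParamVec_{\SufficientValVec}}] \ab(\DataValVec)$ on the fiber to vanish, so the inner sum and the target term $\PMF \ab[\SufficientStatVec_{\sharp} \ProbabilityMeasure_{\ParamVec_{\SufficientValVec}}] \ab(\SufficientValVec)$ are both zero and the identity persists. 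Granting this, the inner sum collapses to $\PMF \ab[\SufficientStatVec_{\sharp} \ProbabilityMeasure_{\ParamMLEVec_{\SufficientStatVec} \ab(\SufficientValVec)}] \ab(\SufficientValVec)$, and substituting back into the outer sum over $\SufficientValVec \in \SufficientStatVec \ab(\DataSpace)$ reproduces the right-hand side exactly, completing the argument.
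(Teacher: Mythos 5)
Your proof is correct and follows essentially the same route as the paper's: both rest on Lemma~\ref{lem:DiscreteGeneralDecomposition} applied with $\SufficientStatVec$ as the grouping map, the factorization of Lemma~\ref{lem:DiscreteSufficientStatDecomposition}, the constancy of the MLE on fibers from Lemma~\ref{lem:SufficientStatMLE}, and the identity \eqref{eqn:SufficientConditionalSum} that the conditional mass sums to one over each fiber; that you decompose first and factorize second (rather than the reverse, as in the paper) is an inessential reordering. Your added care about absolute convergence via nonnegativity and about fibers of zero probability are small refinements the paper leaves implicit, not a different argument.
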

In the original paper \cite{barron1998minimum}, only the outline of the derivation is given and details are omitted. 
Some existing papers \cite{Rissanen:2000} \cite{Hirai&Yamanishi:2013:Efficient} seem to follow similar ideas as they mention Lemma \ref{lem:DiscreteSufficientStatDecomposition}, but the explanations of the derivation there are even rougher.
However, since our objective is to discuss the details of the proof, below we provide a complete proof based on the outline given there.
\begin{proof}
By Lemma \ref{lem:DiscreteSufficientStatDecomposition}, we have that
\begin{equation}
\begin{split}
\sum_{\DataValVec \in \DataSpace} \PMF \ab[\ProbabilityMeasure_{\ParamMLEVec \ab (\DataValVec)}] \ab (\DataValVec)
& =
\sum_{\DataValVec \in \DataSpace} \PMF \ab[\ProbabilityMeasure|\SufficientStatVec_{\sharp} \ProbabilityMeasure] \ab (\DataValVec|\SufficientStatVec (\DataValVec))
\PMF \ab[\SufficientStatVec_{\sharp} \ProbabilityMeasure_{\ParamMLEVec \ab (\DataValVec)}] (\SufficientStatVec (\DataValVec))
\\
& =
\sum_{\DataValVec \in \DataSpace} \PMF \ab[\ProbabilityMeasure|\SufficientStatVec_{\sharp} \ProbabilityMeasure] \ab (\DataValVec|\SufficientStatVec (\DataValVec))
\PMF \ab[\SufficientStatVec_{\sharp} \ProbabilityMeasure_{\ParamMLEVec_{\SufficientStatVec} \ab (\SufficientStatVec \ab (\DataValVec))}] (\SufficientStatVec \ab (\DataValVec)),
\end{split}
\end{equation}
where we applied Lemma \ref{lem:SufficientStatMLE} to get the second equation.
Apply Lemma \ref{lem:DiscreteGeneralDecomposition}, then we get
\begin{equation}
\begin{split}
& \sum_{\DataValVec \in \DataSpace} \PMF \ab[\ProbabilityMeasure|\SufficientStatVec_{\sharp} \ProbabilityMeasure] \ab (\DataValVec|\SufficientStatVec (\DataValVec))
\PMF \ab[\SufficientStatVec_{\sharp} \ProbabilityMeasure_{\ParamMLEVec_{\SufficientStatVec} \ab (\SufficientStatVec \ab (\DataValVec))}] (\SufficientStatVec (\DataValVec))
\\
&=
\sum_{\SufficientValVec \in \SufficientStatVec \ab (\DataSpace)} \sum_{\DataValVec \in \SufficientStatVec^{-1} \ab (\ab \{\SufficientValVec\})} \PMF \ab[\ProbabilityMeasure|\SufficientStatVec_{\sharp} \ProbabilityMeasure] \ab (\DataValVec|\SufficientStatVec \ab (\DataValVec))
\PMF \ab[\SufficientStatVec_{\sharp} \ProbabilityMeasure_{\ParamMLEVec_{\SufficientStatVec} \ab (\SufficientStatVec \ab (\DataValVec))}] \ab (\SufficientStatVec \ab (\DataValVec))
\\
&=
\sum_{\SufficientValVec \in \SufficientStatVec \ab (\DataSpace)} \sum_{\DataValVec \in \SufficientStatVec^{-1} \ab (\ab \{\SufficientValVec\})} \PMF \ab[\ProbabilityMeasure|\SufficientStatVec_{\sharp} \ProbabilityMeasure] \ab (\DataValVec|\SufficientValVec)
\PMF \ab[\SufficientStatVec_{\sharp} \ProbabilityMeasure_{\ParamMLEVec_{\SufficientStatVec} \ab (\SufficientValVec)}] \ab (\SufficientValVec),
\end{split}
\end{equation}
where the second equation holds because $\DataValVec \in \SufficientStatVec^{-1} \ab (\ab \{\SufficientValVec\}) \Leftrightarrow \SufficientStatVec \ab (\DataValVec) = \SufficientValVec$.
Once we obtain the above, since $\PMF \ab[\SufficientStatVec_{\sharp} \ProbabilityMeasure_{\ParamMLEVec_{\SufficientStatVec} \ab (\SufficientValVec)}] \ab (\SufficientValVec)$ does not depend on $\DataValVec$, we can calculate the right-hand side as follows:
\begin{equation}
\begin{split}
&\sum_{\SufficientValVec \in \SufficientStatVec \ab (\DataSpace)} \sum_{\DataValVec \in \SufficientStatVec^{-1} \ab (\ab \{\SufficientValVec\})} \PMF \ab[\ProbabilityMeasure|\SufficientStatVec_{\sharp} \ProbabilityMeasure] \ab (\DataValVec|\SufficientValVec)
\PMF \ab[\SufficientStatVec_{\sharp} \ProbabilityMeasure_{\ParamMLEVec_{\SufficientStatVec} \ab (\SufficientValVec)}] \ab (\SufficientValVec)
\\
&=
\sum_{\SufficientValVec \in \SufficientStatVec \ab (\DataSpace)} \PMF \ab[\SufficientStatVec_{\sharp} \ProbabilityMeasure_{\ParamMLEVec_{\SufficientStatVec} \ab (\SufficientValVec)}] \ab (\SufficientValVec) \sum_{\DataValVec \in \SufficientStatVec^{-1} \ab (\ab \{\SufficientValVec\})} \PMF \ab[\ProbabilityMeasure|\SufficientStatVec_{\sharp} \ProbabilityMeasure] \ab (\DataValVec|\SufficientValVec)
\\
&=
\sum_{\SufficientValVec \in \SufficientStatVec \ab (\DataSpace)} \PMF \ab[\SufficientStatVec_{\sharp} \ProbabilityMeasure_{\ParamMLEVec_{\SufficientStatVec} \ab (\SufficientValVec)}] \ab (\SufficientValVec),
\end{split}
\end{equation}
where the second equation holds because 
\begin{equation}
\label{eqn:SufficientConditionalSum}
\sum_{\DataValVec \in \SufficientStatVec^{-1} \ab (\ab \{\SufficientValVec\})} \PMF \ab[\ProbabilityMeasure|\SufficientStatVec_{\sharp} \ProbabilityMeasure] \ab (\DataValVec|\SufficientValVec) = 1
\end{equation}
holds for any $\SufficientValVec \in \SufficientStatVec \ab (\DataSpace)$ since $\PMF \ab[\ProbabilityMeasure|\SufficientStatVec_{\sharp} \ProbabilityMeasure] \ab (\DataValVec|\SufficientValVec) = 0$ for any $\DataValVec \notin \SufficientStatVec^{-1} \ab (\ab \{\SufficientValVec\})$.
The above completes the proof.
\end{proof}
Recall that the above proof is designed for discrete PPM cases.
Our na\"{i}ve expectation is to obtain a similar formula for continuous PPM cases by replacing the PMF with PDFs and the sums with integrals.
Unfortunately, the above derivation uses the properties listed below, which would not hold if we applied such replacements:
\begin{itemize}
    \item Lemma \ref{lem:DiscreteGeneralDecomposition},
    \item Lemma \ref{lem:DiscreteSufficientStatDecomposition}, and
    \item Equation \eqref{eqn:SufficientConditionalSum}.
\end{itemize}
Specifically, none of the decomposition
\begin{equation}
\label{eqn:NaiveMapDecomposition}
\begin{split}
& \int_{\DataSpace} \FuncIII \ab (\DataValVec) \odif{\Lebesgue^{\DataDim} (\DataValVec)}
\\
& =
\int_{\ParamMLEVec \ab (\DataSpace)} \ab (\int_{\ParamMLEVec^{-1} \ab (\ab \{\ParamVec\})} \FuncIII \ab (\DataValVec) \odif{\Lebesgue^{\ParamDim} (\DataValVec)}) \odif{\Lebesgue^{\DataDim} (\ParamVec)},
\end{split}
\TagIncorrect
\end{equation}
which is the continuous version of the decomposition \ref{lem:DiscreteGeneralDecomposition}, the equation
\begin{equation}
\label{eqn:NaiveSufficientStatDecomposition}
\PDF \ab[\ProbabilityMeasure_{\ParamVec}] (\DataValVec)
=
\PDF \ab[\ProbabilityMeasure|\SufficientStatVec_{\sharp} \ProbabilityMeasure] \ab (\DataValVec|\SufficientStatVec (\DataValVec))
\PDF \ab[\SufficientStatVec_{\sharp} \ProbabilityMeasure_{\ParamVec}] (\SufficientStatVec (\DataValVec)),
\TagIncorrect
\end{equation}
which is the continuous version of Lemma \ref{lem:DiscreteSufficientStatDecomposition},
and equation
\begin{equation}
\label{eqn:NaiveSufficientSum}
\int_{\SufficientStatVec^{-1} \ab (\ab \{\SufficientValVec\})} \PMF \ab[\ProbabilityMeasure|\SufficientStatVec_{\sharp} \ProbabilityMeasure] \ab (\DataValVec|\SufficientValVec) \odif{\Lebesgue^{\DataDim} \ab (\DataValVec)} = 1,
\TagIncorrect
\end{equation}
do not hold in general. 
The following is a counterexample.
\begin{example}[\eqref{eqn:NaiveSufficientStatDecomposition} \eqref{eqn:NaiveMapDecomposition} \eqref{eqn:NaiveSufficientSum} do not hold in general: I.]
\label{exm:Exponential}
Take the exponential distribution model as a PPM example, where the PDF is given by $\PDF \ab[\ProbabilityMeasure_{\theta}] \ab(\DataVal) = \frac{1}{\theta} \exp \ab (- \frac{\DataVal}{\theta})$ on data space $\DataSpace = \Real_{\ge 0}$. 
The parameter $\theta$ is in the parameter space $\Real_{>0}$ and the MLE $\hat{\mathrm{\theta}}$ given by $\hat{\mathrm{\theta}} (\DataVal) = \DataVal$ is a sufficient statistic.
Hence, $\hat{\mathrm{\theta}}$ should play a role of $\SufficientStatVec$ in \eqref{eqn:NaiveSufficientStatDecomposition} this section.
In the following, we see that none of \eqref{eqn:NaiveSufficientStatDecomposition} \eqref{eqn:NaiveMapDecomposition} and \eqref{eqn:NaiveSufficientSum} hold in this example.

Regarding the decomposition \eqref{eqn:NaiveMapDecomposition}, we remark that, for any $\theta \in \Real_{>0}$, we have that $\hat{\theta}^{-1} \ab (\{\theta\}) = \{\theta\}$, which is an one-point set.
Hence, the value of the inner integral in \eqref{eqn:NaiveMapDecomposition} is zero.
This means that the right-hand side of \eqref{eqn:NaiveMapDecomposition} is zero.
Since the left-hand side can be non-zero, \eqref{eqn:NaiveMapDecomposition} is wrong.

Regarding the equation \eqref{eqn:NaiveSufficientStatDecomposition}, in the first place, we cannot define the conditional PDF $\PDF \ab[\ProbabilityMeasure|\hat{\mathrm{\theta}}_{\sharp} \ProbabilityMeasure]$ since the map $\hat{\mathrm{\theta}}$ is deterministic.
Specifically, the density function $\PDF \ab[\ProbabilityMeasure|\hat{\mathrm{\theta}}_{\sharp} \ProbabilityMeasure] \ab (\cdot \middle|\theta)$ should behave like Dirac's delta ``function'' that takes a positive infinity value at $\theta$ and zero everywhere else since $\hat{\mathrm{\theta}} (\DataVal) = \DataVal$ always holds.
Hence, we cannot define $\PDF \ab[\ProbabilityMeasure|\hat{\mathrm{\theta}}_{\sharp} \ProbabilityMeasure] \ab (\cdot \middle|\theta)$ as a ``proper'' function.

Regarding \eqref{eqn:NaiveSufficientSum}, since the integral is defined on the one-point set, the left-hand side is zero. Therefore the equation \eqref{eqn:NaiveSufficientSum} does not hold.
\end{example}

\begin{example}[\eqref{eqn:NaiveSufficientStatDecomposition} \eqref{eqn:NaiveMapDecomposition} \eqref{eqn:NaiveSufficientSum} do not hold in general: II.]
\label{exm:Ellipse}
Take a zero-centered normal distribution model as a PPM example, where the PDF is given by $\PDF \ab[\ProbabilityMeasure_{\theta}] \ab(\DataValVec) = \frac{1}{\pi \Param} \exp \ab (- \frac{\ab(\DataVal_{1})^{2} + 4 \ab(\DataVal_{2})^{2}}{\Param})$ on data space $\DataSpace = \Real^{2}$. 
This example corresponds to Fig.~\ref{fig:coarea}.
The parameter $\Param$ is in the parameter space $\Real_{>0}$ and the MLE $\ParamMLE$ given by $\ParamMLE (\DataValVec) = \ab(\DataVal_{1})^{2} + 4 \ab(\DataVal_{2})^{2}$ is a sufficient statistic.
Hence, $\ParamMLE$ should play a role of $\SufficientStatVec$ in \eqref{eqn:NaiveSufficientStatDecomposition} this section.
In the following, we see that none of \eqref{eqn:NaiveSufficientStatDecomposition} \eqref{eqn:NaiveMapDecomposition} and \eqref{eqn:NaiveSufficientSum} hold in this example.

Regarding the decomposition \eqref{eqn:NaiveMapDecomposition}, for any $\Param \in \Real_{>0}$, we have that $\ParamMLE^{-1} \ab (\{\Param\}) = \ab\{\DataValVec = \begin{bmatrix}\DataVal_{1} & \DataVal_{2} \end{bmatrix}^\top \middle| \ab(\DataVal_{1})^{2} + 4 \ab(\DataVal_{2})^{2} = \Param\}$, which is an ellipse.
For example, the ellipse $\ParamMLE^{-1} \ab (\{0.2\})$ is the one drawn by the thick curve in Fig.~\ref{fig:coarea}.
An ellipse is measure zero when it is measured by the two-dimensional Lebesgue measure.
Hence, the value of the inner integral in \eqref{eqn:NaiveMapDecomposition} is zero.
This means that the right-hand side of \eqref{eqn:NaiveMapDecomposition} is zero.
Since the left-hand side can be non-zero, \eqref{eqn:NaiveMapDecomposition} is wrong.

Regarding the equation \eqref{eqn:NaiveSufficientStatDecomposition}, again, we cannot define the conditional PDF $\PDF \ab[\ProbabilityMeasure|\ParamMLE_{\sharp} \ProbabilityMeasure]$ since the map $\ParamMLE_{\sharp}$ is deterministic.
Specifically, the conditional ``density'' $\PDF \ab[\ProbabilityMeasure|\ParamMLE_{\sharp} \ProbabilityMeasure] \ab (\cdot \middle|\Param)$ should concentrate on the ellipse $\ParamMLE^{-1} \ab (\{\Param\})$ and the density should be zero everywhere else since $\ParamMLE (\DataVal) = \Param$ always holds.
Hence, we cannot define $\PDF \ab[\ProbabilityMeasure|\ParamMLE_{\sharp} \ProbabilityMeasure] \ab (\cdot \middle|\Param)$ as a ``proper'' function.

Regarding \eqref{eqn:NaiveSufficientSum}, since the integral is defined on the ellipse, the left-hand side is zero. Therefore the equation \eqref{eqn:NaiveSufficientSum} does not hold.
\end{example}

As the above examples show, equations \eqref{eqn:NaiveSufficientStatDecomposition} \eqref{eqn:NaiveMapDecomposition} and \eqref{eqn:NaiveSufficientSum} do not hold in general.
Hence, we cannot straightforwardly convert the discussion used to prove Theorem \ref{thm:BarronG} from discrete cases to continuous cases.

\subsection{Derivation by Rissanen \cite{Rissanen:2007:Information} \cite{Rissanen:2012:Optimal}}
While Barron et al.'s discussion in the previous subsection only applies when the PPM has sufficient statistics, Rissanen \cite{Rissanen:2007:Information} \cite{Rissanen:2012:Optimal} derived MC calculation formula for more general cases.
Details are also given in  \cite{yamanishi2023learning}. 
The following is the formal statement of the formula.
\begin{theorem}
\label{thm:YamanishiG}
Suppose that $\ab \{\ProbabilityMeasure_{\ParamVec}\}_{\ParamVec \in \ParamSpace}$ is a discrete PPM and let $\ParamMapVec: \DataSpace \to \ParamSpace \subset \Real^{\ParamDim}$ be an arbitrary map.
Then, the following holds:
\begin{equation}
\sum_{\DataValVec \in \DataSpace} \PMF \ab[\ProbabilityMeasure_{\ParamMapVec \ab (\DataValVec)}] \ab (\DataValVec)
=
\sum_{\ParamVec \in \ParamMapVec \ab (\DataSpace)} \PMF \ab[\ParamMapVec_{\sharp} \ProbabilityMeasure_{\ParamVec}] \ab (\ParamVec),
\end{equation}
where $\PMF \ab[\ParamMapVec_{\sharp} \ProbabilityMeasure_{\ParamVec}]$ is the PMF of the probability measure $\ParamMapVec_{\sharp} \ProbabilityMeasure_{\ParamVec}$, which is the probability measure pushforwarded by the map $\ParamMapVec_{\sharp}$ from $\ProbabilityMeasure_{\ParamVec}$.
\end{theorem}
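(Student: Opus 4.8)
The plan is to reduce everything to the fiber decomposition already available as Lemma~\ref{lem:DiscreteGeneralDecomposition}, applied to the non-negative function $\FuncIII \ab (\DataValVec) \DefEq \PMF \ab[\ProbabilityMeasure_{\ParamMapVec \ab (\DataValVec)}] \ab (\DataValVec)$, and then to recognize each resulting fiber sum as a single value of the pushforward PMF. Because every summand is non-negative, $\sum_{\DataValVec \in \DataSpace} \ab|\FuncIII \ab (\DataValVec)|$ equals the left-hand side verbatim; hence, when that side is finite, the absolute-convergence hypothesis of Lemma~\ref{lem:DiscreteGeneralDecomposition} is met, and when it is infinite the same regrouping of a non-negative series still yields equality in $\overline{\Real}_{\ge 0}$ with both sides equal to $+\infty$. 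In either case the reindexing is legitimate. Note that, unlike Barron et al.'s route to Theorem~\ref{thm:BarronG}, this argument needs neither a sufficient statistic nor the conditional-PMF normalization \eqref{eqn:SufficientConditionalSum}, since $\ParamMapVec$ is an arbitrary map.

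Concretely, I would first write
\begin{equation}
\sum_{\DataValVec \in \DataSpace} \PMF \ab[\ProbabilityMeasure_{\ParamMapVec \ab (\DataValVec)}] \ab (\DataValVec)
=
\sum_{\ParamVec \in \ParamMapVec \ab (\DataSpace)} \sum_{\DataValVec \in \ParamMapVec^{-1} \ab (\ab \{\ParamVec\})} \PMF \ab[\ProbabilityMeasure_{\ParamMapVec \ab (\DataValVec)}] \ab (\DataValVec)
\end{equation}
by Lemma~\ref{lem:DiscreteGeneralDecomposition}. The decisive observation is that $\ParamMapVec$ is constant on each fiber: for every $\DataValVec \in \ParamMapVec^{-1} \ab (\ab \{\ParamVec\})$ one has $\ParamMapVec \ab (\DataValVec) = \ParamVec$, so the data-dependent index in $\PMF \ab[\ProbabilityMeasure_{\ParamMapVec \ab (\DataValVec)}]$ may be frozen to the fixed $\ParamVec$. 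The inner sum thus becomes $\sum_{\DataValVec \in \ParamMapVec^{-1} \ab (\ab \{\ParamVec\})} \PMF \ab[\ProbabilityMeasure_{\ParamVec}] \ab (\DataValVec)$, i.e.\ the mass $\ProbabilityMeasure_{\ParamVec} \ab (\ParamMapVec^{-1} \ab (\ab \{\ParamVec\}))$ that $\ProbabilityMeasure_{\ParamVec}$ assigns to that fiber.

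The final step is to read this fiber mass off as a value of the pushforward PMF: by the definition of the pushforward measure, $\ProbabilityMeasure_{\ParamVec} \ab (\ParamMapVec^{-1} \ab (\ab \{\ParamVec\})) = \ab(\ParamMapVec_{\sharp} \ProbabilityMeasure_{\ParamVec}) \ab (\ab \{\ParamVec\}) = \PMF \ab[\ParamMapVec_{\sharp} \ProbabilityMeasure_{\ParamVec}] \ab (\ParamVec)$. Substituting into the outer sum produces the right-hand side, finishing the proof. I anticipate no substantive obstacle in this discrete setting: the entire content is the legal regrouping of a non-negative series together with the elementary identification of a fiber mass with a pushforward PMF. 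This very simplicity is, however, precisely the point the paper means to stress, since the single step carrying all the weight --- the fiber decomposition of Lemma~\ref{lem:DiscreteGeneralDecomposition} --- has no faithful continuous counterpart: its na\"{i}ve transcription \eqref{eqn:NaiveMapDecomposition} fails because the fibers $\ParamMapVec^{-1} \ab (\ab \{\ParamVec\})$ are typically Lebesgue-null, which is exactly what later forces the coarea-formula machinery.
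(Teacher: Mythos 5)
Your proposal is correct and follows essentially the same route as the paper's own proof: apply Lemma~\ref{lem:DiscreteGeneralDecomposition} to $\PMF \ab[\ProbabilityMeasure_{\ParamMapVec \ab (\DataValVec)}] \ab (\DataValVec)$, freeze the index to $\ParamVec$ on each fiber, and identify the fiber sum with $\ab(\ParamMapVec_{\sharp} \ProbabilityMeasure_{\ParamVec}) \ab (\ab \{\ParamVec\}) = \PMF \ab[\ParamMapVec_{\sharp} \ProbabilityMeasure_{\ParamVec}] \ab (\ParamVec)$ via the definition of the pushforward. If anything, you are slightly more careful than the paper in explicitly discharging the absolute-convergence hypothesis of the lemma by noting that the summands are nonnegative, so the regrouping is valid in $\overline{\Real}_{\ge 0}$ even in the divergent case.
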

\begin{remark}
In Theorem \ref{thm:YamanishiG}, the estimator map $\ParamMapVec$ does not have to be the maximum likelihood estimator or sufficient statistics, while this fact was not emphasized in the original context \cite{Rissanen:2007:Information} \cite{Rissanen:2012:Optimal}.
Since it does not require $\ParamMapVec$ to be sufficient statistics, Theorem \ref{thm:YamanishiG} can be more widely applicable than Theorem \ref{thm:BarronG}.
\end{remark}
In fact the proof of Theorem \ref{thm:YamanishiG} is simpler than \ref{thm:BarronG}, almost immediate from Lemma \ref{lem:DiscreteGeneralDecomposition}.
\begin{proof}
Apply Lemma \ref{lem:DiscreteGeneralDecomposition}, then we have    
\begin{equation}
\begin{split}
\sum_{\DataValVec \in \DataSpace} \PMF \ab[\ProbabilityMeasure_{\ParamMapVec \ab (\DataValVec)}] \ab (\DataValVec)
&=
\sum_{\ParamVec \in \ParamMapVec \ab (\DataSpace)}
\sum_{\DataValVec \in \ParamMapVec^{-1} \ab (\ab \{\ParamVec\})} \PMF \ab[\ProbabilityMeasure_{\ParamMapVec \ab (\DataValVec)}] \ab (\DataValVec)
\\
&=
\sum_{\ParamVec \in \ParamMapVec \ab (\DataSpace)}
\sum_{\DataValVec \in \ParamMapVec^{-1} \ab (\ab \{\ParamVec\})} \PMF \ab[\ProbabilityMeasure_{\ParamVec}] \ab (\DataValVec)
\\
&=
\sum_{\ParamVec \in \ParamMapVec \ab (\DataSpace)} \PMF \ab[\ParamMapVec_{\sharp} \ProbabilityMeasure_{\ParamVec}] \ab (\ParamVec),
\end{split}
\end{equation}
where the second equation holds since $\DataValVec \in \ParamMapVec^{-1} \ab (\ab \{\ParamVec\}) \Leftrightarrow \ProbabilityMeasure_{\ParamMapVec \ab (\DataValVec)} = \ParamVec$.
The third equation holds because
\begin{equation}
\begin{split}
\PMF \ab[\ParamMapVec_{\sharp} \ProbabilityMeasure_{\ParamVec}] \ab (\ParamVec)
&=
\ParamMapVec_{\sharp} \ProbabilityMeasure_{\ParamVec} \ab (\ab \{\ParamVec\})
\\
&=
\ProbabilityMeasure_{\ParamVec} \ab (\ParamMapVec^{-1} \ab (\ab \{\ParamVec\}))
\\
&=
\sum_{\DataValVec \in \ParamMapVec^{-1} \ab (\ab \{\ParamVec\})} \PMF \ab[\ProbabilityMeasure_{\ParamVec}] \ab (\DataValVec).
\end{split}
\end{equation}
\end{proof}

Again, the above proof is designed for discrete PPM cases, since it essentially depends on Lemma \ref{lem:DiscreteGeneralDecomposition}. 
Owing to the dependency on Lemma \ref{lem:DiscreteGeneralDecomposition}, we cannot straightforwardly modify the proof for continuous cases because its continuous version \eqref{eqn:NaiveMapDecomposition} does not hold in general, as discussed in the previous subsection.

\begin{figure*}
    \centering
\begin{tikzpicture}[scale=10.]

    \draw[->,thick,gray!30] (-0.8, 0) -- (0.8, 0) node[black,right] {$x_{1}$};
    \draw[->,thick,gray!30] (0, -0.4) -- (0, 0.4) node[black,above] {$x_{2}$};

    \def\vs{0.1,0.2,0.3,0.4,0.5}

    \fill[gray!30] plot[domain=0:360, samples=100] ({sqrt(0.2)*cos(\x)}, {sqrt(0.2)/2*sin(\x)}) 
                   -- plot[domain=360:0, samples=100] ({sqrt(0.3)*cos(\x)}, {sqrt(0.3)/2*sin(\x)}) 
                   -- cycle;

    \foreach \v in {0.1, 0.4, 0.5} {
        \draw[dashed, gray!80] plot[domain=0:360, samples=100] ({sqrt(\v)*cos(\x)}, {sqrt(\v)/2*sin(\x)});
    }

    \foreach \v in {0.1, 0.2, 0.3, 0.4, 0.5} {
        \path (0, -{sqrt(\v)/2}) node[gray] {\small $\ParamMLE^{-1} \ab(\ab\{\v\})$};
    }

    \draw[ultra thick, black!60] plot[domain=0:360, samples=100] ({sqrt(0.2)*cos(\x)}, {sqrt(0.2)/2*sin(\x)});
    \draw[dashed, thick, black!60] plot[domain=0:360, samples=100] ({sqrt(0.3)*cos(\x)}, {sqrt(0.3)/2*sin(\x)});

    \draw[<->, very thick] ({sqrt(0.2)}, 0) node[below right] {\tiny $\displaystyle \ab[J \hat{\mathrm{v}} \ab(\DataValVec^{[1]})]^{-1}$} -- ({sqrt(0.3)}, 0);

    \fill ({sqrt(0.2)}, 0) circle (0.01) node[left] {$\DataValVec^{[1]}$};

    \draw[<->, very thick] (0, {sqrt(0.2)/2}) -- (0, {sqrt(0.3)/2}) node[midway, right] {\tiny $\displaystyle \ab[J \hat{\mathrm{v}} \ab(\DataValVec^{[2]})]^{-1}$};

    \fill (0, {sqrt(0.2)/2}) circle (0.01) node[below] {$\DataValVec^{[2]}$};

\end{tikzpicture}
\caption{Example of the coarea formula by an estimator map. In this example, $\DataSpace = \Real^{2}$, $\ParamMLEVec \ab (\DataValVec) = \DataVal_{1}^{2} + 4 \DataVal_{2}^{2}$. For $\int_{\Set} \FuncIII (\DataValVec) \odif{\Lebesgue^{\DataDim} \ab (\DataValVec)} = \int_{\Real^{\ParamDim}} \FuncII (\ParamVec) \odif{\Lebesgue^{\ParamDim} \ab (\ParamVec)}$ to hold, the integral of $\FuncIII$ on the gray area, which is bounded by $S_{\Param} = \ParamMLE^{-1} \ab (\ab \{\Param\})$ and $S_{\Param + \Delta \Param} = \ParamMLE^{-1} \ab (\ab \{\Param + \Delta \Param\})$ should be approximated by $\FuncII (\ParamVec) \Delta \Param$.
Here, the interval between $S_{\Param} = \ParamMLE^{-1} \ab (\ab \{\Param\})$ and $S_{\Param + \Delta \Param} = \ParamMLE^{-1} \ab (\ab \{\Param + \Delta \Param\})$ is not uniform everywhere but approximated by $\ab (J \ParamMLE \ab (\DataValVec))^{-1}$.
Hence, $\FuncII$ should be given by integrating the product of $\FuncIII \ab (\DataValVec)$ and $\ab (J \ParamMLE \ab (\DataValVec))^{-1}$ on the ellipse $S_{\Param}$.
Here, since we consider the integral on an ellipse, which is locally one-dimensional, the integral defined on the two-dimensional Lebesgue measure does not work. 
Moreover, since an ellipse is not a line, we cannot apply the integral defined on the one-dimensional Lebesgue measure. Instead, we need to apply the integral defined on the one-dimensional Hausdorff measure.
}
\label{fig:coarea}
\end{figure*}



    
    


\section{Key Idea: Coarea Formula}
\label{sec:Idea}
In this section, we describe the motivation and intuitive meaning of how the coarea formula, the main idea of this study, solves the problems described in the previous section in deriving the MC calculation formula for continuous cases. 
Recall that the difficulty in modifying discrete case discussion to one for continuous cases is Lemma \ref{lem:DiscreteGeneralDecomposition} cannot apply to continuous cases straightforwardly.
Specifically, \eqref{eqn:NaiveMapDecomposition} does not hold in general.
On the other hand, once we obtain a continuous version of Lemma \ref{lem:DiscreteGeneralDecomposition}, then we can also obtain the MC calculation formula for continuous cases based on Rissanen's direction \cite{Rissanen:2007:Information} \cite{Rissanen:2012:Optimal}
since the derivation there is more widely applicable and only depends on Lemma \ref{lem:DiscreteGeneralDecomposition}.
Hence, our motivation in this section is to obtain a continuous version of Lemma \ref{lem:DiscreteGeneralDecomposition}, to obtain the MC calculation formula for continuous cases in the next section.
Specifically, we are motivated to find the function $\FuncII$ such that it is given by the estimator map $\ParamMapVec$ and some integration on $\ParamMapVec^{-1} \ab(\ab\{\ParamVec\})$ and it satisfies
\begin{equation}
\label{eqn:GExpectation}
\int_{\DataSpace} \FuncIII \ab (\DataValVec) \odif{\Lebesgue^{\DataDim} \ab (\DataValVec)}
= 
\int_{\ParamSpace} \FuncII \ab (\ParamVec) \odif{\Lebesgue^{\ParamDim} \ab (\ParamVec)},
\end{equation}
for a given function $\FuncIII$.
In fact, the conclusion of this section is that the coarea formula, which is a fundamental result in geometric measurement theory (e.g., \cite{evans2018measure}), is the correct continuous version of Lemma \ref{lem:DiscreteGeneralDecomposition}, not \eqref{eqn:NaiveMapDecomposition}. 
To state the coarea formula, we will provide the mathematical preliminary, explaining what needs to be modified in \eqref{eqn:NaiveMapDecomposition} and the intuitive reasons for this.
As a result, this section includes the definition of a non-square version of the Jacobian determinant to precisely state the coarea formula, which is also a preparation for stating the main theorems of this study to calculate the MC.
\begin{remark}
\label{rem:GeneralEstimator}
In the following, our direction is to establish the continuous version of Theorem \ref{thm:YamanishiG}, which holds for any estimator $\ParamMapVec$.
Therefore, in the following, we discuss a general estimator $\ParamMapVec$, not limited to a MLE $\ParamMLEVec$.
Nevertheless, we can specialize the discussion into the MLE version if you substitute $\ParamMapVec = \ParamMLEVec$.
\end{remark}

Intuitively speaking, the equation \eqref{eqn:NaiveMapDecomposition}, which does not hold in general, has two significant issues that are essential when considering continuous space.
\begin{enumerate}
    \item While we want to evaluate the integral of the PDF on the set $\ParamMapVec^{-1} \ab(\ab\{\ParamVec\})$ in some sense, we cannot use the integral defined by the Lebesgue measure $\Lebesgue^{\DataDim}$ since the set $\ParamMapVec^{-1} \ab(\ab\{\ParamVec\})$ is dimension-deficient and its Lebesgue measure is zero in general.
    In other words, the inner integral's value in \eqref{eqn:NaiveMapDecomposition} is always zero as long as we apply the integration defined by the Lebesgue measure $\Lebesgue^{\DataDim}$.
    \item The ratio of the infinitesimal change of the data point $\DataValVec$ in the data space $\DataSpace$ to the associated infinitesimal change of the estimator $\ParamMapVec$ in the parameter space $\ParamSpace$ is not taken into account.
    Specifically, the ratio of the hypervolume of the infinitesimal parallelotope in the data space to the hypervolume of the infinitesimal parallelotope in the parameter space that corresponds to that parallelotope by the estimator is not considered.
\end{enumerate}
\begin{example}
Let us revisit Example \ref{exm:Ellipse}, which corresponds to Fig.~\ref{fig:coarea}.
Since $\ParamMLE^{-1} \ab(\ab\{\Param\})$ is an ellipse for $\Param \in \Real_{>0}$ as demonstrated by the case of $\Param = 0.2$ in Fig.~\ref{fig:coarea}, the integral $\int_{\ParamMLE^{-1} \ab(\ab\{\Param\})} \FuncIII \ab(\DataValVec) \odif{\Lebesgue^{2} \ab(\DataValVec)}$ is always zero regardless of what the function $\FuncIII$ is.
Hence, we need a suitable measure to define the integral on the ellipse.

Also, as demonstrated by the case of $\Param = 0.2$ and $\Delta \Param = 0.1$ in Fig.~\ref{fig:coarea}, the distance between the ellipse $\ParamMLE^{-1} \ab(\ab\{\Param\})$ and $\ParamMLE^{-1} \ab(\ab\{\Param + \Delta \Param\})$ is not uniform.
For $\FuncII$ to satisfy \eqref{eqn:GExpectation}, $\FuncII \ab(\Param) \Delta \Param$ should approximate the integral $\int_{\ParamMapVec^{-1} \ab([\Param, \Param + \Delta \Param])} \FuncIII \ab(\DataValVec) \odif{\Lebesgue^{2} \ab(\DataValVec)}$, where the integral domain corresponds to the gray area in Fig.~\ref{fig:coarea}.
Hence, we need to the distance between $\ParamMLE^{-1} \ab(\ab\{\Param\})$ and $\ParamMLE^{-1} \ab(\ab\{\Param + \Delta \Param\})$, which corresponds to the width of the gray area in Fig.~\ref{fig:coarea} to obtain the correct $\FuncII$.
\end{example}
In the following, we introduce the Hausdorff measure to solve the issue 1) and the non-square version of the Jacobian determinant to solve the issue 2).

\subsection{The Hausdorff measure to evaluate the integral on a dimension-deficient set}
As discussed above, the Lebesgue measure is not suitable for quantifying the ``size'' of a dimension-deficient set, such as $\ParamMapVec^{-1} \ab (\ab \{\ParamVec\})$, which plays a key role in Lemma \ref{lem:DiscreteGeneralDecomposition}.
Specifically, the Lebesgue measure of such a set is zero.
Hence, we need a new measure to quantify the ``size'' of such a set.
The Hausdorff measure meets our demand.
Intuitively, Hausdorff measure covers the set $\Set \subset \Real^{\DataDim}$ of interest by microballs and assigns to each microball the volume of the $s$-dimensional ball of the same diameter. Here, the diameter of a set $\Set \subset \Real^{\DataDim}$ is defined by $\Diam \ab (\Set) = \sup_{\boldsymbol{x}, \boldsymbol{y} \in \Set} \ab \|\boldsymbol{x}- \boldsymbol{y}\|_{2}$.
\begin{definition}[Hausdorff (outer) measure]
Let $V_{s} \ab (r)$ be the volume of the $s$-dimensional ball of radius $r$, defined by $V_{s} \ab (r) \DefEq \frac{\pi^{\frac{s}{2}}}{\Gamma \ab (s + \frac{1}{2})} r^{s}$.
Suppose $s \in \Real_{\ge0}$ and $\delta \in \overline{\Real}_{>0}$.
We define $\Hausdorff_{\delta}^{s}: 2^{\Real^{\DataDim}} \to \overline{\Real}_{\ge 0}$ by
\begin{equation}
\Hausdorff_{\delta}^{s} \ab (\Set)
\DefEq \inf \ab \{ \sum_{j}^{\infty} V_s \ab (\frac{\Diam \ab (C_j)}{2}) \middle| \begin{aligned}
&C_{1}, C_{2}, \dots \in \Real^{\DataDim}, \\ 
&\Set \subset \bigcup_{j}^{\infty} C_{j}, \\
&\Diam \ab (C_{j}) \le \delta. 
\end{aligned}\}.
\end{equation}
The \NewTerm{$s$-dimensional Hausdorff outer measure} $\ab (\Hausdorff^{s})^{*}: 2^{\Real^{\DataDim}} \to \overline{\Real}_{\ge 0}$ is define by
\begin{equation}
\ab (\Hausdorff^{s})^{*} \ab (\Set) = \lim_{\delta \searrow 0} \Hausdorff_{\delta}^{s} \ab (\Set).
\end{equation}
Define $\Sigma^{s, \DataDim} \subset 2^{\Real^{\DataDim}}$ by
\begin{equation}
\Sigma^{s, \DataDim}
\DefEq
\ab \{\SetI \in \Real^{\DataDim} \middle| \forall \SetII \in \Real^{\DataDim}, \ab (\Hausdorff^{s})^{*} \ab (\SetI \cap \SetII) + \ab (\Hausdorff^{s})^{*} \ab (\SetII \setminus \SetI) \}.
\end{equation}
Then, $\Hausdorff^{s} \DefEq \left. (\Hausdorff^{s})^{*}\right|_{\Sigma^{s, \DataDim}}: \Sigma^{s, \DataDim}
 \to \overline{\Real}_{\ge 0}$, which is given by restricting the domain of the outer measure $(\Hausdorff^{s})^{*}$ to $\Sigma^{s, \DataDim}$, is called the \NewTerm{$s$-dimensional Hausdorff measure}. 
\end{definition}

\begin{remark}
The only difference between the Hausdorff outer measure and the Hausdorff measure is in its domain of definition. Since this paper deals only with measurable functions, we can ignore those differences.
\end{remark}

By using the Hausdorff measure, as aimed, we can define an integral that is intrinsically meaningful on a dimension-deficient set, including $\ParamMapVec^{-1} \ab(\ab\{\ParamVec\})$.

\subsection{The non-square Jacobian determinant as a ratio of inifinitesimal changes in the data and parameter spaces}
Another issue is that the decomposition \eqref{eqn:NaiveDecomposition} does not consider the ratio between the infinitesimal change in parameter space and the infinitesimal in data space.
Recall that the ratio is correctly considered as the Jacobian determinant in the change of variables in integral by a bijective.
Similarly, we can consider the ratio by the non-square version of the Jacobian determinant.
\begin{definition}[the non-square version of the Jacobian determinant]
Let $\partial_{i}$ be the partial differential operator with respect to $i$-th element.
For $\FuncVec: \Real^{\DataDim} \to \Real^{\ParamDim}$, let $\nabla \FuncVec: \Real^{\DataDim} \to \Real^{\ParamDim, \DataDim}$ be the Jacobian matrix defined by 
\begin{equation}
\nabla \FuncVec \ab (\DataValVec)
= 
\bxmat[showleft=2,showtop=2,format=\partial_{#3} #1_{#2} \ab (\DataValVec)]
{\Func}{\ParamDim}{\DataDim}.
\end{equation}
Then, we define the non-square absolute Jacobian determinant $J \FuncVec: \Real^{\DataDim} \to \Real_{\ge 0}$ by
\begin{equation}
\begin{split}
J \FuncVec \ab (\DataValVec)
& \DefEq
\prod_{k = 1}^{\min\{\ParamDim, \DataDim\}}
\sigma_{k} \ab(\nabla \FuncVec)
\\
& =
\begin{cases}
\sqrt{\det \ab (\nabla \FuncVec \ab (\DataValVec)
\ab (\nabla \FuncVec \ab (\DataValVec)
)^\top)} & \quad \textrm{if $\DataDim \ge \ParamDim$,} \\
\sqrt{\det \ab (\ab (\nabla \FuncVec \ab (\DataValVec)
)^\top \nabla \FuncVec \ab (\DataValVec))} & \quad \textrm{if $\DataDim \le \ParamDim$,} \\
\end{cases}
\end{split}
\end{equation}
where $\sigma_{k} \ab(\nabla \FuncVec) \in \Real_{\ge 0}$ is the $k$-th singular value of $\nabla \FuncVec$.
\end{definition}
\begin{remark}
If $\DataDim=\ParamDim$, then $J \FuncVec \ab (\DataValVec)$ is nothing but the absolute Jacobian determinant $\ab |\det(\nabla \FuncVec \ab (\DataValVec))|$, which appears in the well-known integral formula for the change of variable by a bijective map.
\end{remark}
\begin{remark}[Geometric intuition of the Jacobian determinant]
Like square cases, the non-square Jacobian of a map $\FuncVec$ also indicates how an infinitesimal parallelotope is mapped by $\FuncVec$ and its determinant indicates how the volume is changed by the map.
To understand the meaning of the Jacobian determinant, the singular value decomposition of the Jacobian matrix $\nabla \FuncVec$ helps.
In the following, we assume $\DataDim \ge \ParamDim$, but a similar explanation holds for the cases when $\DataDim \le \ParamDim$.

First, the behavior of the function $\FuncVec$ in the neighborhood of a point $\DataValVec_{0} \in \Real^\DataDim$ is approximated as follows:
\begin{equation}
\FuncVec \ab(\DataValVec) - \FuncVec \ab(\DataValVec_{0})
\approx
\nabla \FuncVec \ab(\DataValVec_{0}) \ab(\DataValVec - \DataValVec_{0}).
\end{equation}
Let $\nabla \FuncVec \ab(\DataValVec_{0}) = \boldsymbol{U} \boldsymbol{\varSigma} \boldsymbol{V}^{\top}$ be the singular value decomposition of $\nabla \FuncVec \ab(\DataValVec_{0})$. 
Here, $\boldsymbol{U} = \ab[\xmat[showleft=2,format={#1_{#3}}]{\boldsymbol{u}}{1}{\ParamDim}] \in \Real^{\ParamDim, \ParamDim}$ and $\boldsymbol{V} = \ab[\xmat[showleft=2,format={#1_{#3}}]{\boldsymbol{v}}{1}{\DataDim}] \in \Real^{\DataDim, \DataDim}$ are orthogonal matrices, and 
\begin{equation}
\boldsymbol{\varSigma}
=
\begin{bmatrix}
\sigma_{1} & 0 & \cdots & 0 & 0 & \cdots & 0 \\
0 & \sigma_{2} & \cdots & 0 & 0 & \cdots & 0 \\
\vdots & \vdots & \ddots & \vdots & \vdots & \ddots & \vdots \\
0 & 0 & \cdots & \sigma_{\ParamDim} & 0 & \cdots & 0 \\
\end{bmatrix}
\in \Real^{\ParamDim, \DataDim}
\end{equation}
is a diagonal matrix, whose $k$-th diagonal entry $\sigma_{k} = \sigma_{k} \ab(\nabla \FuncVec \ab(\DataValVec_{0}))$ is the $k$-th singular value of $\nabla \FuncVec \ab(\DataValVec_{0})$.
At $\DataValVec_{0}$, the map $\FuncVec$ degenerates the vectors $\boldsymbol{v}_{\ParamDim + 1}, \boldsymbol{v}_{\ParamDim + 2}, \dots, \boldsymbol{v}_{\DataDim}$.
In other words, infinitesimal change of the point along with a vector in the subspace spanned by $\ab\{\boldsymbol{v}_{\ParamDim + 1}, \boldsymbol{v}_{\ParamDim + 2}, \dots, \boldsymbol{v}_{\DataDim}\}$ does not change the return value of $\FuncVec$.
Let us consider the infinitesimal $\ParamDim$-dimensional parallelotope formed by $\ParamDim$ normal vectors $\lambda \boldsymbol{v}_{1}, \lambda \boldsymbol{v}_{2}, \dots, \lambda \boldsymbol{v}_{\DataDim}$ with length $\lambda$ at $\DataValVec_{0}$ in $\Real^{\DataDim}$.
This parallelotope's volume is $\lambda^{\ParamDim}$.
The map $\FuncVec$ maps the $k$-th infinitesimal vector $\lambda \boldsymbol{v}_{k} \in \Real^{\DataDim}$ at $\DataValVec_{0}$ to $\lambda \sigma_{k} \boldsymbol{u}_{k} \in \Real^{\ParamDim}$.
Hence, the image of the original $\ParamDim$-dimensional parallelotope at $\DataValVec_{0}$ in $\Real^{\DataDim}$ is the $\ParamDim$-dimensional parallelotope at $\FuncVec \ab(\DataValVec_{0})$ in $\Real^{\ParamDim}$ formed by $\ParamDim$ infinitesimal vectors $\lambda \sigma_{1} \boldsymbol{v}_{1}, \lambda \sigma_{2} \boldsymbol{v}_{2}, \dots, \lambda \sigma_{\DataDim} \boldsymbol{v}_{\DataDim}$.
Hence, the volume of the image parallelotope is $\prod_{k=1}^{\ParamDim} \lambda \sigma_{k} = J \FuncVec \ab(\DataValVec_{0}) \lambda^{\ParamDim}$.
That is, the map $\FuncVec$ multiplies the volume of an infinitesimal parallelotope by $J \FuncVec \ab(\DataValVec_{0})$ times. 
\end{remark}

As explained in the above remark, the map $\FuncVec$ multiplies the volume of an infinitesimal parallelotope at $\DataValVec_{0}$ by $J \FuncVec \ab(\DataValVec_{0})$ times.
Conversely, $\FuncVec$ associates a parallelotope at $\FuncVec \ab(\DataValVec_{0})$ in $\Real^{\ParamDim}$ with one at $\DataValVec_{0}$ in $\Real^{\DataDim}$, whose volume is $\frac{1}{J \FuncVec \ab(\DataValVec_{0})}$ times as large as the original parallelotope.
Hence, roughly speaking, when we attempt to evaluate an integral in $\Real^{\DataDim}$ through the integral in $\Real^{\ParamDim}$ associated by the map $\FuncVec$, we should multiply the integrated by $\frac{1}{J \FuncVec \ab(\DataValVec_{0})}$. This idea is formulated as the coarea formula in the next subsection.

\subsection{Coarea formula}
Now, we have defined the Hausdorff measure and the non-square version of the Jacobian determinant.
From the discussion in the previous subsections, we would expect that we could correct the equation \eqref{eqn:NaiveMapDecomposition}, which does not hold in general, with modifications by replacing the Lebesgue measure with the Hausdorff measure in the inner integral and dividing it by the non-square version of the Jacobian determinant of the estimator map $\ParamMapVec$. 
And this expectation is indeed correct. 
This is exactly what the coarea formula states.
\begin{theorem}[coarea formula: integral version]
\label{thm:CoareaChangeVariable}
Suppose $\DataDim \ge \ParamDim$.
Let $\FuncVec: \Real^{\DataDim} \to \Real^{\ParamDim}$ be Lipschitz continuous,
and assume that $\essinf_{\DataValVec} J \FuncVec \ab (\DataValVec) > 0$, i.e., $\Lebesgue^{\DataDim} \ab (\ab\{\DataValVec \middle| \textrm{$J \FuncVec \ab (\DataValVec)$ is undefined or $J \FuncVec \ab (\DataValVec) = 0$} \}) = 0$.
Let $\FuncIII: \Real^{\DataDim} \to \Real$ be a \textbf{nonnegative} measurable function.
Then, we have that
\begin{equation}
\label{eqn:CoareaIntegral}
\begin{split}
& \int_{\Set} \FuncIII (\DataValVec) \odif{\Lebesgue^{\DataDim} \ab (\DataValVec)}
\\
& =
\int_{\Real^{\ParamDim}}
\ab (
\int_{\FuncVec^{-1} \ab (\ab \{\ParamVec\})} \FuncIII (\DataValVec) \ab(J \FuncVec \ab (\DataValVec))^{-1'} \odif{\Hausdorff^{\DataDim-\ParamDim} \ab (\DataValVec)}
)
\odif{\Lebesgue^{\ParamDim} \ab (\ParamVec)},
\end{split}
\end{equation}
where $\ab(J \FuncVec \ab (\DataValVec))^{-1'}$ is defined by
\begin{equation}
\ab(J \FuncVec \ab (\DataValVec))^{-1'}
\DefEq
\frac{1 \ab\{J \FuncVec \ab (\DataValVec) > 0\}}{J \FuncVec \ab (\DataValVec)}
=
\begin{cases}
\frac{1}{J \FuncVec \ab (\DataValVec)} \quad & \textrm{if $J \FuncVec \ab (\DataValVec) > 0$}, \\
0 \quad & \textrm{otherwise}. \\
\end{cases}
\end{equation}
Here, \eqref{eqn:CoareaIntegral} also claims that if either side of them is positive infinity, then the other side is also positive infinity.
\end{theorem}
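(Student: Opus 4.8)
The plan is to obtain \eqref{eqn:CoareaIntegral} as an algebraic rearrangement of the classical coarea formula of geometric measure theory, which I treat as an established result (e.g.~\cite{evans2018measure}) rather than reproving from scratch. In its standard form, for a Lipschitz map $\FuncVec: \Real^{\DataDim} \to \Real^{\ParamDim}$ with $\DataDim \ge \ParamDim$ and any nonnegative measurable $\phi: \Real^{\DataDim} \to \overline{\Real}_{\ge 0}$, the formula reads
\begin{equation*}
\int_{\Real^{\DataDim}} \phi \ab(\DataValVec) \, J \FuncVec \ab(\DataValVec) \odif{\Lebesgue^{\DataDim} \ab(\DataValVec)}
=
\int_{\Real^{\ParamDim}} \ab( \int_{\FuncVec^{-1} \ab(\ab\{\ParamVec\})} \phi \ab(\DataValVec) \odif{\Hausdorff^{\DataDim - \ParamDim} \ab(\DataValVec)} ) \odif{\Lebesgue^{\ParamDim} \ab(\ParamVec)}.
\end{equation*}
The key observation is that the nonsquare Jacobian factor $J \FuncVec$ sits here on the \emph{domain} side, multiplying the integrand, whereas the target \eqref{eqn:CoareaIntegral} carries the reciprocal factor $\ab(J \FuncVec)^{-1'}$ on the \emph{fiber} side. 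The entire proof is therefore a matter of choosing the correct test function $\phi$ and discharging the null set on which $J \FuncVec$ degenerates.

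First I would record the two measure-theoretic facts that legitimize the substitution. By Rademacher's theorem the Lipschitz map $\FuncVec$ is differentiable $\Lebesgue^{\DataDim}$-almost everywhere, so $\nabla \FuncVec$, and hence $J \FuncVec$ (a fixed continuous function of the singular values of $\nabla \FuncVec$), is defined and measurable off a null set; consequently the indicator $1 \ab\{J \FuncVec \ab(\DataValVec) > 0\}$ and the reciprocal $\ab(J \FuncVec \ab(\DataValVec))^{-1'}$ are measurable as well. Moreover, the hypothesis $\essinf_{\DataValVec} J \FuncVec \ab(\DataValVec) > 0$ is precisely the statement that
\begin{equation*}
\Lebesgue^{\DataDim} \ab( \ab\{ \DataValVec \ \middle| \ \textrm{$J \FuncVec \ab(\DataValVec)$ is undefined or $J \FuncVec \ab(\DataValVec) = 0$} \} ) = 0,
\end{equation*}
so the set where the reciprocal is truncated to $0$ is $\Lebesgue^{\DataDim}$-negligible.

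Next I would apply the standard formula to the auxiliary function $\phi \DefEq \FuncIII \cdot \ab(J \FuncVec)^{-1'}$, which is nonnegative and measurable by the above. On the right-hand side this reproduces verbatim the iterated integral appearing in \eqref{eqn:CoareaIntegral}, so no further work is needed there. On the left-hand side the integrand becomes
\begin{equation*}
\phi \ab(\DataValVec) \, J \FuncVec \ab(\DataValVec)
=
\FuncIII \ab(\DataValVec) \, \frac{1 \ab\{J \FuncVec \ab(\DataValVec) > 0\}}{J \FuncVec \ab(\DataValVec)} \, J \FuncVec \ab(\DataValVec)
=
\FuncIII \ab(\DataValVec) \, 1 \ab\{J \FuncVec \ab(\DataValVec) > 0\},
\end{equation*}
which equals $\FuncIII \ab(\DataValVec)$ for $\Lebesgue^{\DataDim}$-almost every $\DataValVec$ by the null-set fact above. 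Since functions agreeing $\Lebesgue^{\DataDim}$-a.e.\ have equal $\Lebesgue^{\DataDim}$-integrals, the domain integral equals $\int_{\Real^{\DataDim}} \FuncIII \ab(\DataValVec) \odif{\Lebesgue^{\DataDim} \ab(\DataValVec)}$, which is the left-hand side of \eqref{eqn:CoareaIntegral} (reading $\Set = \Real^{\DataDim}$; for a proper measurable subset $\Set$ one replaces $\FuncIII$ by $\FuncIII \cdot 1_{\Set}$ throughout and restricts each fiber accordingly). The extended-real ``either side is $+\infty$'' clause requires no separate argument, since the standard coarea formula is itself an identity in $\overline{\Real}_{\ge 0}$ for nonnegative integrands, preserved under the a.e.-equality on the left.

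I expect the main obstacle to be conceptual rather than computational: recognizing that the reciprocal-Jacobian form in \eqref{eqn:CoareaIntegral} is exactly the standard coarea formula after the substitution $\phi = \FuncIII \ab(J \FuncVec)^{-1'}$, and then being careful that $\ab(J \FuncVec)^{-1'}$ is genuinely measurable despite $J \FuncVec$ being defined only almost everywhere, and that the degeneracy set is $\Lebesgue^{\DataDim}$-null so that the truncation is harmless. The genuinely deep analysis — the coarea formula itself, with its approximation of Lipschitz maps by affine pieces, Rademacher differentiability, and the area/coarea machinery — is imported wholesale; were it not available as a cited theorem, that machinery would be the hard part, but here the remaining work is light bookkeeping.
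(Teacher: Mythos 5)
Your proof is correct, and its decisive move --- substituting $\phi = \FuncIII \cdot \ab(J \FuncVec)^{-1'}$ into a coarea formula carrying the Jacobian on the domain side, then discharging the degeneracy set $\ab\{J \FuncVec = 0 \textrm{ or undefined}\}$ as $\Lebesgue^{\DataDim}$-null --- is exactly the paper's final step, including your handling of a general $\Set$ via $\FuncIII \cdot 1_{\Set}$ and the measurability of $\ab(J \FuncVec)^{-1'}$ via Rademacher. The one substantive difference is what you import versus what the paper proves: you cite the functional coarea formula for \emph{arbitrary nonnegative measurable} integrands as the ``standard form'' in \cite{evans2018measure}, whereas the paper imports only the measure version (Theorem 3.10 there, stated for measurable sets) and spends most of its appendix deriving your starting point from it, by monotone approximation with simple functions and two applications of the monotone convergence theorem. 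That extra work is not pedantry: as the paper's own remark points out, the integral version actually printed in the cited reference (Theorem 3.11) assumes the integrand is \emph{summable}, not merely nonnegative, and summability is precisely what cannot be assumed here --- the theorem's closing clause (``if either side is $+\infty$, so is the other'') only has content for non-integrable integrands. So your appeal to the reference, as literally worded, reaches for a version that is not stated there; the version you need is true and appears in other GMT sources, and is recoverable in a few lines by monotone convergence from the measure version, which is exactly the paper's route. To make your argument sound under the paper's citations you would either swap in a reference that genuinely states the nonnegative-measurable case or include that short approximation argument; with that patch, your proof and the paper's coincide.
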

\begin{remark}
The specific version of the integral formula provided by Theorem \ref{thm:CoareaChangeVariable} is not standard, but we can prove it from the standard coarea formula to convert the measure as in Appendix.
Similar versions are well-known (e.g., Theorem 3.11. in \cite{evans2018measure}), where not the nonnegativity but the integrability of the integrated is assumed.
In this paper, we only handle nonnegative functions as integrands, so we do not extend the theorem to general cases.
\end{remark}
\begin{remark}
\label{rem:Differentiablility}
Theorem \ref{thm:CoareaChangeVariable} assumes that the integrand $\FuncVec$ is Lipschitz continuous, but not differentiable everywhere. 
Hence, we cannot define $J \FuncVec$ everywhere.
Nevertheless, $\FuncVec$ is differentiable \Emph{almost} everywhere according to Rademacher's theorem (e.g., Theorem 3.2. in \cite{evans2018measure}), and thus we can define $J \FuncVec$ \Emph{almost} everywhere.
This is an intuitive reason why the right-hand side of \eqref{eqn:CoareaIntegral} is well-defined.
\end{remark}
In the next section, we state our main theorems, using the definitions of the Hausdorff measure and the non-square version of the Jacobian determinant, and the coarea formula.

\section{Main Results}
\label{sec:Main}
In this section, we state our main theorems.
Recall that we aim to show that we can calculate the MC using the PDF of the estimator's distribution.
To achieve that, we first provide the specific form of the PDF of the distribution, then we provide the formula to calculate the MC using the PDF. 

First, we provide a specific formula of the PDF of the probability measure $\ParamMapVec_{\sharp} \ProbabilityMeasure_{\ParamVec}$, i.e., the PDF of the estimator $\ParamMapVec \ab (\DataRVVec_{\ParamVec})$ where $\DataRVVec_{\ParamVec}$ follows the distribution $\ProbabilityMeasure_{\ParamVec}$.
\begin{theorem}[Estimator's PDF]
\label{thm:EstimatorPDF}
Suppose $\DataSpace \subset \Real^{\DataDim}$ and $\ParamSpace \subset \Real^{\ParamDim}$.
Let $\ab \{\ProbabilityMeasure_{\ParamVec}\}_{\ParamVec \in \ParamSpace}$ be a continuous PPM on the data space $\DataSpace$ and let $\PDF \ab[\ProbabilityMeasure_{\ParamVec}]: \DataSpace \to \Real_{\ge 0}$ be the (canonical) PDF of $\ProbabilityMeasure_{\ParamVec}$ for $\ParamVec \in \ParamSpace$.
Let $\ParamMapVec: \DataSpace \to \ParamSpace$ be Lipschitz continuous and assume that $\essinf_{\DataValVec} J \ParamMapVec \ab (\DataValVec) > 0$, i.e., $\Lebesgue^{\DataDim} \ab (J \ParamMapVec \ab (\DataValVec) = 0 \textrm{ or undefined}) = 0$.
Define $\PDF \ab[\ParamMapVec_{\sharp} \ProbabilityMeasure_{\ParamVec}]: \ParamSpace \to \Real_{\ge 0}$ by
\begin{equation}
\PDF \ab[\ParamMapVec_{\sharp} \ProbabilityMeasure_{\ParamVec}] \ab (\ParamVec')
=
\int_{\ParamMapVec^{-1} \ab (\ab \{\ParamVec'\})} \PDF \ab[\ProbabilityMeasure_{\ParamVec}] (\DataValVec) \ab(J \ParamMapVec \ab (\DataValVec))^{-1'}
\odif{\Hausdorff^{\DataDim-\ParamDim} \ab (\DataValVec)},
\end{equation}
where $\ab(J \ParamMapVec \ab (\DataValVec))^{-1'}$ is defined by
\begin{equation}
\ab(J \ParamMapVec \ab (\DataValVec))^{-1'}
\DefEq
\frac{1 \ab\{J \ParamMapVec \ab (\DataValVec) > 0\}}{J \ParamMapVec \ab (\DataValVec)}
=
\begin{cases}
\frac{1}{J \ParamMapVec \ab (\DataValVec)} \quad & \textrm{if $J \FuncVec \ab (\DataValVec) > 0$}, \\
0 \quad & \textrm{otherwise}. \\
\end{cases}
\end{equation}
Then, $\PDF \ab[\ParamMapVec_{\sharp} \ProbabilityMeasure_{\ParamVec}]$ is a PDF of $\ParamMapVec_{\sharp} \ProbabilityMeasure_{\ParamVec}$, i.e., for any measurable set $\Set \subset \ParamSpace$, the following holds:
\begin{equation}
\ab (\ParamMapVec_{\sharp} \ProbabilityMeasure_{\ParamVec})
\ab (\Set)
=
\int_{A} \PDF \ab[\ParamMapVec_{\sharp} \ProbabilityMeasure_{\ParamVec}] \ab (\ParamVec') \odif{\Lebesgue^{\ParamDim} \ab (\ParamVec')}.
\end{equation}
In other words, if random variable $\DataRVVec_{\ParamVec}$'s distribution is $\ProbabilityMeasure_{\ParamVec}$, then the following holds:
\begin{equation}
\Probability \ab (\ParamMapVec
(\DataRVVec_{\ParamVec}) \in \Set)
=
\int_{A} \PDF \ab[\ParamMapVec_{\sharp} \ProbabilityMeasure_{\ParamVec}] \ab (\ParamVec') \odif{\Lebesgue^{\ParamDim} \ab (\ParamVec')}.
\end{equation}
\end{theorem}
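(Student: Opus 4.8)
The plan is to reduce the statement to the coarea formula (Theorem~\ref{thm:CoareaChangeVariable}) applied to a single well-chosen nonnegative integrand. First I would unwind the left-hand side by composing the definition of the pushforward measure with the defining property of the PDF of a continuous PPM. Since $\ab(\ParamMapVec_{\sharp} \ProbabilityMeasure_{\ParamVec}) \ab(\Set) = \ProbabilityMeasure_{\ParamVec} \ab(\ParamMapVec^{-1}(\Set))$ and $\ProbabilityMeasure_{\ParamVec}$ has density $\PDF \ab[\ProbabilityMeasure_{\ParamVec}]$, this equals $\int_{\ParamMapVec^{-1}(\Set)} \PDF \ab[\ProbabilityMeasure_{\ParamVec}] \ab(\DataValVec) \odif{\Lebesgue^{\DataDim} \ab(\DataValVec)}$. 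The crucial rewriting is to express this restricted integral over the preimage as an integral over all of $\DataSpace$ against a pulled-back indicator, using $1_{\ParamMapVec^{-1}(\Set)} = 1_{\Set} \circ \ParamMapVec$, so that
\begin{equation}
\ab(\ParamMapVec_{\sharp} \ProbabilityMeasure_{\ParamVec}) \ab(\Set)
=
\int_{\DataSpace} 1_{\Set} \ab(\ParamMapVec \ab(\DataValVec)) \PDF \ab[\ProbabilityMeasure_{\ParamVec}] \ab(\DataValVec) \odif{\Lebesgue^{\DataDim} \ab(\DataValVec)}.
\end{equation}

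Next I would apply Theorem~\ref{thm:CoareaChangeVariable} with $\FuncVec = \ParamMapVec$ and the nonnegative measurable integrand $\FuncIII \ab(\DataValVec) = 1_{\Set} \ab(\ParamMapVec \ab(\DataValVec)) \PDF \ab[\ProbabilityMeasure_{\ParamVec}] \ab(\DataValVec)$, extending $\PDF \ab[\ProbabilityMeasure_{\ParamVec}]$ by zero outside $\DataSpace$ so that $\FuncIII$ is supported on $\DataSpace$; the hypotheses that $\ParamMapVec$ is Lipschitz with $\essinf_{\DataValVec} J \ParamMapVec \ab(\DataValVec) > 0$ are exactly those required by the theorem. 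This converts the data-space integral into
\begin{equation}
\int_{\Real^{\ParamDim}} \ab( \int_{\ParamMapVec^{-1}(\{\ParamVec'\})} 1_{\Set} \ab(\ParamMapVec \ab(\DataValVec)) \PDF \ab[\ProbabilityMeasure_{\ParamVec}] \ab(\DataValVec) \ab(J \ParamMapVec \ab(\DataValVec))^{-1'} \odif{\Hausdorff^{\DataDim - \ParamDim} \ab(\DataValVec)} ) \odif{\Lebesgue^{\ParamDim} \ab(\ParamVec')}.
\end{equation}
The linchpin is that the pulled-back indicator is constant along each fiber: for every $\DataValVec \in \ParamMapVec^{-1}(\{\ParamVec'\})$ one has $\ParamMapVec \ab(\DataValVec) = \ParamVec'$, hence $1_{\Set} \ab(\ParamMapVec \ab(\DataValVec)) = 1_{\Set} \ab(\ParamVec')$. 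Pulling this constant out of the inner Hausdorff integral leaves exactly the fiber integral that defines $\PDF \ab[\ParamMapVec_{\sharp} \ProbabilityMeasure_{\ParamVec}] \ab(\ParamVec')$ in the statement, so the double integral collapses to
\begin{equation}
\int_{\Real^{\ParamDim}} 1_{\Set} \ab(\ParamVec') \PDF \ab[\ParamMapVec_{\sharp} \ProbabilityMeasure_{\ParamVec}] \ab(\ParamVec') \odif{\Lebesgue^{\ParamDim} \ab(\ParamVec')}
=
\int_{\Set} \PDF \ab[\ParamMapVec_{\sharp} \ProbabilityMeasure_{\ParamVec}] \ab(\ParamVec') \odif{\Lebesgue^{\ParamDim} \ab(\ParamVec')},
\end{equation}
which is precisely the claimed identity.

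Once the coarea formula is in hand, the computation above is essentially formal, so the main obstacle I expect is the measurability bookkeeping needed to invoke it cleanly rather than any hard estimate. Two points require care. First, the integrand $\FuncIII = (1_{\Set} \circ \ParamMapVec) \cdot \PDF \ab[\ProbabilityMeasure_{\ParamVec}]$ must be measurable; this holds because $\ParamMapVec$ is a measurable map (indeed continuous, being Lipschitz), so that $\ParamMapVec^{-1}(\Set)$ is measurable whenever $\Set$ is, exactly as assumed in the definition of the pushforward measure. Second, the fiber integral $\ParamVec' \mapsto \PDF \ab[\ParamMapVec_{\sharp} \ProbabilityMeasure_{\ParamVec}] \ab(\ParamVec')$ must itself be a measurable function of $\ParamVec'$ for the outer $\Lebesgue^{\ParamDim}$-integral to make sense; I would take this measurability from the standard coarea formula underlying Theorem~\ref{thm:CoareaChangeVariable}. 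A final minor technicality is the Lipschitz extension of $\ParamMapVec$ from $\DataSpace$ to all of $\Real^{\DataDim}$ (e.g.\ via a componentwise McShane extension) so that the theorem applies verbatim; this is harmless since $\FuncIII$ vanishes off $\DataSpace$ and the fibers contribute only through their intersection with $\DataSpace$.
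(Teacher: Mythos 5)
Your proposal is correct and follows essentially the same route as the paper's own proof: rewrite $\ab(\ParamMapVec_{\sharp}\ProbabilityMeasure_{\ParamVec})\ab(\Set)$ as an integral of the pulled-back indicator times the density over $\DataSpace$, apply Theorem~\ref{thm:CoareaChangeVariable} with that nonnegative integrand, and use the fact that the indicator is constant on each fiber $\ParamMapVec^{-1}\ab(\ab\{\ParamVec'\})$ to pull it out of the inner Hausdorff integral and recover the defining fiber integral of $\PDF\ab[\ParamMapVec_{\sharp}\ProbabilityMeasure_{\ParamVec}]$. Your added remarks on measurability and on extending $\ParamMapVec$ off $\DataSpace$ are care the paper leaves implicit, but they do not change the argument.
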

\begin{remark}
Regarding the differentiability of $\ParamMapVec$, refer to Remark \ref{rem:Differentiablility}, where $\FuncVec$ plays the same role as $\ParamMapVec$ here.
\end{remark}

\begin{proof}[Proof of Theorem \ref{thm:EstimatorPDF}]
\begin{equation}
\ab (\ParamMapVec_{\sharp} \ProbabilityMeasure_{\ParamVec}) \ab (\Set)
=
\int_{\DataSpace} 1 \ab \{\ParamMapVec \ab (\DataValVec) \in \Set\} \PDF \ab[\ProbabilityMeasure_{\ParamVec}] \ab (\DataValVec) \odif{\Lebesgue^{\DataDim} \ab (\DataValVec)}.
\end{equation}
According to Theorem \ref{thm:CoareaChangeVariable}, the right-hand side equals to
\begin{equation}
\small
\int_{\ParamMapVec \ab (\DataSpace)} \ab (\int_{\ParamMapVec^{-1} \ab (\ab \{\ParamVec'\})} 1 \ab \{\ParamMapVec \ab (\DataValVec) \in \Set\} \PDF \ab[\ProbabilityMeasure_{\ParamVec}] \ab (\DataValVec) \ab(J \ParamMapVec \ab (\DataValVec))^{-1'} \odif{\Hausdorff^{\DataDim-\ParamDim} \ab (\DataValVec)}) \odif{\Lebesgue^{\ParamDim} \ab (\ParamVec)}.
\end{equation}
In the inner integral, $\DataValVec \in \ParamMapVec^{-1} \ab (\ab \{\ParamVec'\})$ holds, which is equivalent to $\ParamMapVec \ab (\DataValVec) = \ParamVec'$. 
Therefore, the above iterated integral equals
\begin{equation}
\small
\begin{split}
& \int_{\ParamMapVec \ab (\DataSpace)} \ab (\int_{\ParamMapVec^{-1} \ab (\ab \{\ParamVec'\})} 1 \ab \{\ParamVec' \in \Set\} \PDF \ab[\ProbabilityMeasure_{\ParamVec}] \ab (\DataValVec) \ab(J \ParamMapVec \ab (\DataValVec))^{-1'} \odif{\Hausdorff^{\DataDim-\ParamDim} \ab (\DataValVec)}) \odif{\Lebesgue^{\ParamDim} \ab (\ParamVec)} \\
& =
\int_{\ParamMapVec \ab (\DataSpace)} 1 \ab \{\ParamVec' \in \Set\} \ab (\int_{\ParamMapVec^{-1} \ab (\ab \{\ParamVec'\})} \PDF \ab[\ProbabilityMeasure_{\ParamVec}] \ab (\DataValVec)\ab(J \ParamMapVec \ab (\DataValVec))^{-1'} \odif{\Hausdorff^{\DataDim-\ParamDim} \ab (\DataValVec)}) \odif{\Lebesgue^{\ParamDim} \ab (\ParamVec)} \\
& =
\int_{\Set} \ab (\int_{\ParamMapVec^{-1} \ab (\ab \{\ParamVec'\})} \PDF \ab[\ProbabilityMeasure_{\ParamVec}] \ab (\DataValVec)\ab(J \ParamMapVec \ab (\DataValVec))^{-1'} \odif{\Hausdorff^{\DataDim-\ParamDim} \ab (\DataValVec)}) \odif{\Lebesgue^{\ParamDim} \ab (\ParamVec)} \\
& =
\int_{\Set} \ab
\PDF \ab[\ParamMapVec_{\sharp} \ProbabilityMeasure_{\ParamVec}] \ab (\ParamVec')
\odif{\Lebesgue^{\ParamDim} \ab (\ParamVec)},
\end{split}
\end{equation}
which completes the proof.
\end{proof}

Now, we have obtained the PDF of the estimator $\ParamMapVec$.
We are ready to state our final theorem asserting that we can calculate the MC using the PDF given by Theorem \ref{thm:EstimatorPDF}.

\begin{theorem}[MC Calculation formula for a continuous PPM]
\label{thm:ContinuousMC}
Under the assumptions in Theorem \ref{thm:EstimatorPDF} and the definition of $\PDF \ab[\ParamMapVec_{\sharp} \ProbabilityMeasure_{\ParamVec}]$,
we can calculate the LMC as follows:
\begin{equation}
\begin{split}
\ExponentialComplexity_{\Luckiness} \ab[\ProbabilityMeasure_{\ParamSpace}]
& \DefEq
\int_{\DataSpace} \PDF \ab[\ProbabilityMeasure_{\ParamMapVec \ab (\DataValVec)}] \ab (\DataValVec) \Luckiness \ab(\ParamMapVec \ab (\DataValVec)) \odif{\Lebesgue^{\DataDim} \ab (\DataValVec)}
\\
& = 
\int_{\ParamSpace} \PDF \ab[\ParamMapVec_{\sharp} \ProbabilityMeasure_{\ParamVec}] \ab (\ParamVec) \Luckiness (\ParamVec) \odif{\Lebesgue^{\ParamDim} \ab (\ParamVec)}.
\end{split}
\end{equation}
In particular, we can calculate the MC by setting $\Luckiness = 1_{\ParamSpace}$ as follows:
\begin{equation}
\begin{split}
\ExponentialComplexity \ab[\ProbabilityMeasure_{\ParamSpace}]
& \DefEq
\int_{\DataSpace} \PDF \ab[\ProbabilityMeasure_{\ParamMapVec \ab (\DataValVec)}] \ab (\DataValVec) \odif{\Lebesgue^{\DataDim} \ab (\DataValVec)}
\\
& = 
\int_{\ParamSpace} \PDF \ab[\ParamMapVec_{\sharp} \ProbabilityMeasure_{\ParamVec}] \ab (\ParamVec) \odif{\Lebesgue^{\ParamDim} \ab (\ParamVec)}.
\end{split}
\end{equation}
\end{theorem}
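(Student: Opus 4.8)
The plan is to reduce the statement directly to the coarea formula (Theorem~\ref{thm:CoareaChangeVariable}) together with the definition of $\PDF \ab[\ParamMapVec_{\sharp} \ProbabilityMeasure_{\ParamVec}]$ supplied by Theorem~\ref{thm:EstimatorPDF}; indeed, the argument will mirror the proof of Theorem~\ref{thm:EstimatorPDF} almost line for line. First I would set $\FuncIII \ab (\DataValVec) \DefEq \PDF \ab[\ProbabilityMeasure_{\ParamMapVec \ab (\DataValVec)}] \ab (\DataValVec) \, \Luckiness \ab (\ParamMapVec \ab (\DataValVec))$ and note that it is nonnegative, since both $\PDF$ and $\Luckiness$ take values in $\Real_{\ge 0}$. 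The hypotheses carried over from Theorem~\ref{thm:EstimatorPDF} --- namely $\DataDim \ge \ParamDim$, $\ParamMapVec$ Lipschitz continuous, and $\essinf_{\DataValVec} J \ParamMapVec \ab (\DataValVec) > 0$ --- are exactly those required by Theorem~\ref{thm:CoareaChangeVariable}, so I may apply the coarea formula with $\FuncVec = \ParamMapVec$ to rewrite the left-hand side as
\begin{equation}
\int_{\ParamMapVec \ab (\DataSpace)} \ab ( \int_{\ParamMapVec^{-1} \ab (\ab \{\ParamVec\})} \PDF \ab[\ProbabilityMeasure_{\ParamMapVec \ab (\DataValVec)}] \ab (\DataValVec) \, \Luckiness \ab (\ParamMapVec \ab (\DataValVec)) \, \ab(J \ParamMapVec \ab (\DataValVec))^{-1'} \odif{\Hausdorff^{\DataDim - \ParamDim} \ab (\DataValVec)} ) \odif{\Lebesgue^{\ParamDim} \ab (\ParamVec)}.
\end{equation}

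The decisive step is the fiberwise substitution. On the inner domain $\ParamMapVec^{-1} \ab (\ab \{\ParamVec\})$ every $\DataValVec$ satisfies $\ParamMapVec \ab (\DataValVec) = \ParamVec$ by the very definition of the level set, so I may replace $\PDF \ab[\ProbabilityMeasure_{\ParamMapVec \ab (\DataValVec)}]$ by $\PDF \ab[\ProbabilityMeasure_{\ParamVec}]$ and $\Luckiness \ab (\ParamMapVec \ab (\DataValVec))$ by the constant $\Luckiness \ab (\ParamVec)$. This is precisely the freezing of the otherwise data-dependent maximum-likelihood parameter on each level set, and it is what lets the single-distribution construction of Theorem~\ref{thm:EstimatorPDF} apply. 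Pulling the constant $\Luckiness \ab (\ParamVec)$ out of the inner integral, the bracketed expression becomes $\Luckiness \ab (\ParamVec) \int_{\ParamMapVec^{-1} \ab (\ab \{\ParamVec\})} \PDF \ab[\ProbabilityMeasure_{\ParamVec}] \ab (\DataValVec) \, \ab(J \ParamMapVec \ab (\DataValVec))^{-1'} \odif{\Hausdorff^{\DataDim - \ParamDim} \ab (\DataValVec)}$, which by the definition in Theorem~\ref{thm:EstimatorPDF} equals $\Luckiness \ab (\ParamVec) \, \PDF \ab[\ParamMapVec_{\sharp} \ProbabilityMeasure_{\ParamVec}] \ab (\ParamVec)$.

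It remains only to reconcile the outer integration domains: the coarea formula delivers an integral over $\ParamMapVec \ab (\DataSpace)$, whereas the right-hand side integrates over $\ParamSpace$. For any $\ParamVec \in \ParamSpace \setminus \ParamMapVec \ab (\DataSpace)$ the level set $\ParamMapVec^{-1} \ab (\ab \{\ParamVec\})$ is empty, so $\PDF \ab[\ParamMapVec_{\sharp} \ProbabilityMeasure_{\ParamVec}] \ab (\ParamVec) = 0$ there; enlarging the domain from $\ParamMapVec \ab (\DataSpace)$ to $\ParamSpace$ therefore adds only a null contribution and yields $\int_{\ParamSpace} \PDF \ab[\ParamMapVec_{\sharp} \ProbabilityMeasure_{\ParamVec}] \ab (\ParamVec) \, \Luckiness \ab (\ParamVec) \odif{\Lebesgue^{\ParamDim} \ab (\ParamVec)}$, as claimed; the MC formula then follows by setting $\Luckiness = 1_{\ParamSpace}$. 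I do not expect a genuinely deep obstacle here, since Theorems~\ref{thm:CoareaChangeVariable} and~\ref{thm:EstimatorPDF} carry essentially all the analytic weight and the fiberwise identity $\ParamMapVec \ab (\DataValVec) = \ParamVec$ holds \emph{exactly}, not merely almost everywhere, on each level set. The one point that does demand care is the measurability of the composite integrand $\DataValVec \mapsto \PDF \ab[\ProbabilityMeasure_{\ParamMapVec \ab (\DataValVec)}] \ab (\DataValVec)$, which requires joint measurability of $\ab (\ParamVec, \DataValVec) \mapsto \PDF \ab[\ProbabilityMeasure_{\ParamVec}] \ab (\DataValVec)$ so that $\FuncIII$ is a legitimate nonnegative integrand for Theorem~\ref{thm:CoareaChangeVariable}.
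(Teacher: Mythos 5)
Your proposal is correct and follows essentially the same route as the paper's own proof: apply Theorem \ref{thm:CoareaChangeVariable} to the nonnegative integrand, use the identity $\ParamMapVec \ab(\DataValVec) = \ParamVec$ exactly on each level set to freeze the parameter, and recognize the inner Hausdorff integral as $\PDF \ab[\ParamMapVec_{\sharp} \ProbabilityMeasure_{\ParamVec}] \ab(\ParamVec) \Luckiness \ab(\ParamVec)$ by the definition in Theorem \ref{thm:EstimatorPDF}. Your explicit reconciliation of the outer domains $\ParamMapVec \ab(\DataSpace)$ versus $\ParamSpace$ (empty fibers contribute zero) is a small point the paper leaves implicit, but it does not change the argument.
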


\begin{example}[The LMC of the exponential distribution]
\label{exm:ExponentialLMC}
Consider the $\NData$ product measure of the exponential distribution model, whose PDF $\PDF \ab[\ab(\ProbabilityMeasure_{\theta})^{\NData}]: \ab(\Real_{\ge 0})^{\NData} \to \Real_{\ge 0}$ is given by
\begin{equation}
\PDF \ab[\ab(\ProbabilityMeasure_{\theta})^{\NData}] \ab(\DataVal_{1}^{\NData})
=
\prod_{\IDatum=1}^{\NData} \PDF \ab[\ProbabilityMeasure_{\theta}] \ab(\DataVal_{\IDatum})
=
\frac{1}{\theta} \exp \ab(- \frac{\sum_{\IDatum=1}^{\NData} \DataVal_{\IDatum}}{\theta})
.
\end{equation}
It is known that the MLE $\hat{\uptheta}: \Real_{\ge 0}^{\NData} \to \Real_{\ge 0}$ with respect to $\theta$ is given by $\hat{\uptheta} \ab(\DataVal_{1}^{\NData}) = \frac{1}{\NData} \sum_{\IDatum=1}^{\NData} \DataVal_{\IDatum}$.
From the reproducibility of the Gamma distribution, we can see that the PDF of $\hat{\uptheta}_{\sharp} \ab(\ProbabilityMeasure_{\theta})^{\NData}$ equals that of the Gamma distribution with shape parameter $\NData$ and scale parameter $\frac{\theta}{\NData}$. 
Hence, we have the following:
\begin{equation}
\PDF \ab[\hat{\uptheta}_{\sharp} \ab(\ProbabilityMeasure_{\theta})^{\NData}] \ab(\theta')
=
\frac{1}{\Gamma \ab(\NData)} \ab(\frac{\NData}{\theta})^{\NData} \ab(\theta')^{\NData - 1} \exp(-\frac{\NData}{\theta} \theta').
\end{equation}
When $\theta = \theta'$, we have the following:
\begin{equation}
\PDF \ab[\hat{\uptheta}_{\sharp} \ab(\ProbabilityMeasure_{\theta})^{\NData}] \ab(\theta)
=
\frac{\NData^{\NData}}{\Gamma \ab(\NData) \theta} \exp(-\NData).
\end{equation}
Therefore, according to Theorem \ref{thm:ContinuousMC}, we can calculate the LMC as follows:
\begin{equation}
\begin{split}
\ExponentialComplexity_{\Luckiness} \ab[\ProbabilityMeasure_{\ParamSpace}]
& = 
\int_{\ParamSpace} \PDF \ab[\ParamMapVec_{\sharp} \ProbabilityMeasure_{\ParamVec}] \ab (\ParamVec) \Luckiness (\ParamVec) \odif{\Lebesgue^{\ParamDim} \ab (\ParamVec)}
\\
& = 
\int_{\Real_{>0}} \frac{\NData^{\NData}}{\Gamma \ab(\NData) \theta} \exp(-\NData) \Luckiness (\theta) \odif{\Lebesgue^{\ParamDim} \ab (\theta)} \\
& = 
\frac{\NData^{\NData}}{\Gamma \ab(\NData)} \exp(-\NData) \int_{\Real_{>0}} \frac{1}{\theta} \Luckiness (\theta) \odif{\Lebesgue^{\ParamDim} \ab (\theta)}.
\end{split}
\end{equation}
If the luckiness $\Luckiness$ is given by the indicator function $\Luckiness = 1_{[\theta_{\min}, \theta_{\max}]}$, then we have
\begin{equation}
\ExponentialComplexity_{\Luckiness} \ab[\ProbabilityMeasure_{\ParamSpace}]
=
\frac{\NData^{\NData}}{\Gamma \ab(\NData)} \exp(-\NData) \ab(\log \theta_{\max} - \log \theta_{\min}).
\end{equation}

This calculation itself has been given in, e.g., \cite{Rissanen:2007:Information} \cite{Rissanen:2012:Optimal} \cite{yamanishi2023learning}.
Our contribution lies in the mathematical proofs behind the formula. 
\end{example}

\begin{remark}
\begin{enumerate}
    \item As also explained in Remark \ref{rem:GeneralEstimator}, Theorems \ref{thm:EstimatorPDF} and \ref{thm:ContinuousMC} apply to any estimator as long as it satisfies the assumptions in the Theorems. In particular, it does not have to be the MLE. All we need to obtain the result for the MLE $\ParamMLEVec$ is to substitute $\ParamMapVec = \ParamMLEVec$.
    \item In the right-hand side of Theorem \ref{thm:ContinuousMC}, $\ParamVec$, which is the variable of integral, also appears as the index of $\ProbabilityMeasure$. In other words, the index of $\ProbabilityMeasure$ also varies while integrating.
    \item Theorem \ref{thm:ContinuousMC} is the first theorem with a correct proof that shows we can calculate the exact value of the MC by integrating a function given by the PDF of the estimator.
\end{enumerate}
\end{remark}
\begin{remark}[Limitation of Theorem \ref{thm:ContinuousMC} and future work]
Theorem \ref{thm:ContinuousMC} states that the equation \eqref{eqn:ContinuousGFormula} holds if the PDF of the estimator $\ParamMapVec$ is chosen as in Theorem \ref{thm:EstimatorPDF}.
This achieves this study's goal essentially, which was to prove the formula \eqref{eqn:ContinuousGFormula}.

Nevertheless, we still have room for improving Theorem \ref{thm:ContinuousMC}, because, strictly speaking, the PDF is not unique for values on a set of zero measures. 
When calculating probabilities for a fixed probability distribution, differences in values on the set of zero measures are negligible and therefore not a problem. 
However, in the case of Theorem \ref{thm:ContinuousMC}, the parameter of the probability distribution also changes during integration. 
Hence, the lack of uniqueness of PDF can change the integration result. 
Thus, strictly speaking, it is an open question whether Theorem \ref{thm:ContinuousMC} can be applied if we take an arbitrary PDF of $\ParamMapVec_{\sharp} \ProbabilityMeasure_{\ParamVec}$.

In reality, PDF of one distribution calculated by multiple methods rarely differ, so the above concern is not considered a practical problem.
Nevertheless, mathematically it is an interesting future work. 
In particular, since a continuous PDF is unique if it exists, if the PDF given in Theorem 8 is continuous, the calculation method of Theorem 9 can be used even if a continuous PDF of $\ParamMapVec_{\sharp} \ProbabilityMeasure_{\ParamVec}$ is calculated in another way. 
Therefore, one important future work is to find the conditions under which the PDF given in Theorem 8 is continuous.
\end{remark}
\begin{proof}[Proof of Theorem \ref{thm:ContinuousMC}]
Applying Theorem \ref{thm:CoareaChangeVariable}, we have that
\begin{equation}
\begin{split}
& \int_{\DataSpace} \PDF \ab[\ProbabilityMeasure_{\ParamMapVec \ab (\DataValVec)}] \ab (\DataValVec) \Luckiness \ab(\ParamMapVec \ab (\DataValVec)) \odif{\Lebesgue^{\DataDim} \ab (\DataValVec)}
\\ 
& = \int_{\ParamMapVec \ab (\DataSpace)} \ab (\int_{\ParamMapVec^{-1} \ab (\ab \{\ParamVec\})} \frac{\PDF \ab[\ProbabilityMeasure_{\ParamMapVec \ab (\DataValVec)}] \ab (\DataValVec) \Luckiness \ab(\ParamMapVec \ab (\DataValVec))}{J \ParamMapVec \ab (\DataValVec)} \odif{\Hausdorff^{\DataDim-\ParamDim} \ab (\DataValVec)}) \odif{\Lebesgue^{\ParamDim} \ab (\ParamVec)}.
\end{split}
\end{equation}
Here, in the internal integral, $\ParamMapVec^{-1} \ab (\ab \{\ParamVec\})$ holds, which is equivalent to $\ParamMapVec \ab (\DataValVec) = \ParamVec$.
Therefore, the above iterated integral equals the following:
\begin{equation}
\int_{\ParamMapVec \ab (\DataSpace)} \ab (\int_{\ParamMapVec^{-1} \ab (\ab \{\ParamVec\})} \frac{\PDF \ab[\ProbabilityMeasure_{\ParamVec}] \ab (\DataValVec) \Luckiness \ab(\ParamVec)}{J \ParamMapVec \ab (\DataValVec)} \odif{\Hausdorff^{\DataDim-\ParamDim} \ab (\DataValVec)}) \odif{\Lebesgue^{\ParamDim} \ab (\ParamVec)}.
\end{equation}
Here, the inner integral satisfies
\begin{equation}
\int_{\ParamMapVec^{-1} \ab (\ab \{\ParamVec\})} \frac{\PDF \ab[\ProbabilityMeasure_{\ParamVec}] \ab (\DataValVec) \Luckiness \ab(\ParamVec)}{J \ParamMapVec \ab (\DataValVec)} \odif{\Hausdorff^{\DataDim-\ParamDim} \ab (\DataValVec)}
=
\PDF \ab[\ParamMapVec_{\sharp} \ProbabilityMeasure_{\ParamVec}] \ab (\ParamVec) \Luckiness \ab(\ParamVec)
\end{equation}
by the definition of $\PDF \ab[\ParamMapVec_{\sharp} \ProbabilityMeasure_{\ParamVec}]$ given in Theorem \ref{thm:EstimatorPDF}.
It completes the proof.
\end{proof}

\section{Conclusion}
This paper has, for the first time, justified the MC calculation formula using the MLE density, which has widely been used in the past, for continuous PPM cases. Specifically, we have provided a specific representation of the PDF of general estimators, including MLE, where the data follows a continuous probability distribution.
Using the PDF of the estimator given above, we have derived the MC calculation formula with complete proof. 
The MC calculation formula for which we provided complete proof has widely been applied due to its simplicity, our mathematical justification of the formula leads to justification of those applications. 
In this sense, the contribution of this study is significant.


%

\appendices

\section{Proof of Theorem \ref{thm:CoareaChangeVariable}}
In this section, we prove Theorem \ref{thm:CoareaChangeVariable} using the following standard version of the coarea formula converting a measure.
\begin{theorem}[coarea formula: measure version (e.g., Theorem 3.10 in \cite{evans2018measure})]
\label{thm:CoareaMeasure}
Suppose $\DataDim \ge \ParamDim$.
Let $\FuncVec: \Real^{\DataDim} \to \Real^{\ParamDim}$ be Lipschitz continuous.
Then for each $\Lebesgue^{\DataDim}$-measurable set $\Set \subset \Real^{\DataDim}$,
\begin{equation}
\int_{\Set} J \FuncVec \ab (\DataValVec) \odif{\Lebesgue^{\DataDim} \ab (\DataValVec)}
=
\int_{\Real^{\ParamDim}}
\Hausdorff^{\DataDim-\ParamDim} (\Set \cap \FuncVec^{-1} \ab (\ab \{\ParamVec\})) \odif{\Lebesgue^{\ParamDim} \ab (\ParamVec)}.
\end{equation}
\end{theorem}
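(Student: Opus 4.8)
The plan is to obtain the integral version (Theorem~\ref{thm:CoareaChangeVariable}) from the measure version (Theorem~\ref{thm:CoareaMeasure}) just stated, by the standard upgrade from set indicators to arbitrary nonnegative integrands via monotone convergence, followed by a single substitution that inserts the reciprocal Jacobian. Concretely, I would first prove the auxiliary identity
\begin{equation}
\label{eqn:CoareaWithIntegrand}
\int_{\Real^{\DataDim}} \FuncIII \ab(\DataValVec) \, J \FuncVec \ab(\DataValVec) \odif{\Lebesgue^{\DataDim} \ab(\DataValVec)}
=
\int_{\Real^{\ParamDim}} \ab(\int_{\FuncVec^{-1} \ab(\ab\{\ParamVec\})} \FuncIII \ab(\DataValVec) \odif{\Hausdorff^{\DataDim-\ParamDim} \ab(\DataValVec)}) \odif{\Lebesgue^{\ParamDim} \ab(\ParamVec)}
\end{equation}
for every nonnegative measurable $\FuncIII$, and only at the last step specialize it to produce the factor $\ab(J \FuncVec)^{-1'}$ appearing in \eqref{eqn:CoareaIntegral}.

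To establish \eqref{eqn:CoareaWithIntegrand} I would run the usual three-layer approximation. For an indicator $\FuncIII = 1_{\SetII}$ of a measurable $\SetII \subset \Real^{\DataDim}$, the left-hand side equals $\int_{\SetII} J \FuncVec \odif{\Lebesgue^{\DataDim}}$ and the inner integral on the right equals $\Hausdorff^{\DataDim-\ParamDim} \ab(\SetII \cap \FuncVec^{-1} \ab(\ab\{\ParamVec\}))$, so \eqref{eqn:CoareaWithIntegrand} is precisely Theorem~\ref{thm:CoareaMeasure} with $\Set = \SetII$; note that this already supplies the $\Lebesgue^{\ParamDim}$-measurability of the slice function $\ParamVec \mapsto \Hausdorff^{\DataDim-\ParamDim} \ab(\SetII \cap \FuncVec^{-1} \ab(\ab\{\ParamVec\}))$. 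Finite nonnegative linear combinations then extend \eqref{eqn:CoareaWithIntegrand} to simple functions, and for general nonnegative measurable $\FuncIII$ I would take simple $\FuncIII_{k} \nearrow \FuncIII$ and invoke monotone convergence twice: once on $\Real^{\DataDim}$ for the left-hand side, and on the right first on each fibre $\FuncVec^{-1} \ab(\ab\{\ParamVec\})$ to pass the limit through the inner Hausdorff integral and then on $\Real^{\ParamDim}$ for the outer integral. The slice integrals of $\FuncIII$ remain $\ParamVec$-measurable as pointwise increasing limits of those of $\FuncIII_{k}$, so the outer integral stays well defined.

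With \eqref{eqn:CoareaWithIntegrand} available, I would substitute $\FuncIII \ab(\DataValVec) \, \ab(J \FuncVec \ab(\DataValVec))^{-1'}$, again nonnegative and measurable, in place of $\FuncIII$. On the right this reproduces verbatim the inner integrand of \eqref{eqn:CoareaIntegral}. On the left the product collapses through $\ab(J \FuncVec)^{-1'} J \FuncVec = 1 \ab\{J \FuncVec > 0\}$, leaving $\int_{\Real^{\DataDim}} \FuncIII \, 1 \ab\{J \FuncVec > 0\} \odif{\Lebesgue^{\DataDim}}$; since the hypothesis $\essinf_{\DataValVec} J \FuncVec \ab(\DataValVec) > 0$ makes the exceptional set on which $J \FuncVec$ vanishes or is undefined $\Lebesgue^{\DataDim}$-null, we have $1 \ab\{J \FuncVec > 0\} = 1$ almost everywhere and the left-hand side is just $\int_{\Real^{\DataDim}} \FuncIII \odif{\Lebesgue^{\DataDim}}$. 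Restricting to a measurable $\Set$ by replacing $\FuncIII$ with $\FuncIII \, 1_{\Set}$ then gives \eqref{eqn:CoareaIntegral}, and because every quantity lives in $[0, +\infty]$ the concluding ``one side is $+\infty$ iff the other is'' clause is automatic.

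I expect the main obstacle to be not the algebra but the measurability and convergence bookkeeping inside \eqref{eqn:CoareaWithIntegrand}: one must justify that the slice function is $\Lebesgue^{\ParamDim}$-measurable (inherited from Theorem~\ref{thm:CoareaMeasure} for indicators and propagated through sums and monotone limits) and that monotone convergence genuinely applies within the iterated integral over the $\ParamVec$-parametrized family of Hausdorff measures. A secondary, easily dispatched point is that $J \FuncVec$ is defined only $\Lebesgue^{\DataDim}$-almost everywhere by Rademacher's theorem (cf.\ Remark~\ref{rem:Differentiablility}); the convention $\ab(J \FuncVec)^{-1'} = 1 \ab\{J \FuncVec > 0\} / J \FuncVec$ sets the reciprocal to $0$ on that null set, so the cancellation above is valid almost everywhere and the substitution causes no trouble.
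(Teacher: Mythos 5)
You have not proved the statement in question. The statement is the measure version of the coarea formula (Theorem \ref{thm:CoareaMeasure}), and your very first sentence takes that theorem as ``just stated'' input and instead derives the integral version (Theorem \ref{thm:CoareaChangeVariable}) from it. As a proof of Theorem \ref{thm:CoareaMeasure} this is circular: the indicator base case of your three-layer approximation is, by your own observation, ``precisely Theorem \ref{thm:CoareaMeasure}'', so everything downstream --- simple functions, monotone convergence, and the final substitution of $\FuncIII \ab(\DataValVec) \ab(J \FuncVec \ab(\DataValVec))^{-1'}$ --- rests on the identity you were asked to establish. A genuine proof of the measure version is a substantial piece of geometric measure theory: it requires Rademacher's theorem, a decomposition of the Lipschitz map into countably many pieces on which it is well approximated by linear maps, the area formula applied on those pieces, and a separate argument showing that the set where $J \FuncVec$ vanishes contributes zero to both sides. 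None of that machinery appears in your proposal. Note that the paper itself does not prove Theorem \ref{thm:CoareaMeasure} either; it imports it from the literature (Theorem 3.10 of \cite{evans2018measure}) as the foundation on which its appendix is built, so the theorem cannot be recovered from the paper's other results without reproducing that external proof.

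That said, what you did write is a correct proof of a different result, and it coincides almost step for step with the paper's appendix proof of Theorem \ref{thm:CoareaChangeVariable}: the paper likewise first upgrades Theorem \ref{thm:CoareaMeasure} to arbitrary nonnegative integrands (its equation \eqref{eqn:CoareaIntegralOriginal}, which is your auxiliary identity) via simple-function approximation and the monotone convergence theorem, and then substitutes $\FuncII = \FuncIII \cdot \ab(J \FuncVec)^{-1'}$, using $\essinf_{\DataValVec} J \FuncVec \ab(\DataValVec) > 0$ to discard the indicator $1 \ab\{J \FuncVec \ab(\DataValVec) > 0\}$ almost everywhere. Your measurability bookkeeping for the slice function $\ParamVec \mapsto \Hausdorff^{\DataDim-\ParamDim} \ab(\SetII \cap \FuncVec^{-1} \ab(\ab\{\ParamVec\}))$ is, if anything, slightly more explicit than the paper's. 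So the content is sound, but it is a proof of the integral version from the measure version, not a proof of the measure version itself.
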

Applying Theorem \ref{thm:CoareaMeasure}, we can derive the following Theorem.
\begin{theorem}
Suppose $\DataDim \ge \ParamDim$.
Let $\FuncVec: \Real^{\DataDim} \to \Real^{\ParamDim}$ be Lipschitz continuous.
Let $\FuncII: \Real^{\DataDim} \to \Real$ be a \textbf{nonnegative} measurable function.
Then, we have that
\begin{equation}
\label{eqn:CoareaIntegralOriginal}
\begin{split}
& \int_{\Real^{\ParamDim}} \FuncII (\DataValVec) J \FuncVec \ab (\DataValVec) \odif{\Lebesgue^{\DataDim} \ab (\DataValVec)}
\\
& =
\int_{\Real^{\ParamDim}}
\ab (
\int_{\FuncVec^{-1} \ab (\ab \{\ParamVec\})} \FuncII (\DataValVec)
\odif{\Hausdorff^{\DataDim-\ParamDim} \ab (\DataValVec)}
)
\odif{\Lebesgue^{\ParamDim} \ab (\ParamVec)}.
\end{split}
\end{equation}
Note that the above equation also claims that if either side is positive infinity then the other side is also positive infinity.
\end{theorem}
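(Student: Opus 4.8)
The plan is to prove the identity by the standard approximation argument, bootstrapping from indicator functions to nonnegative simple functions to arbitrary nonnegative measurable functions, using the measure version in Theorem~\ref{thm:CoareaMeasure} as the base case. Throughout, every integrand takes values in $\overline{\Real}_{\ge 0}$, so the monotone convergence theorem is always available; this nonnegativity is also what makes the asserted propagation of the value $+\infty$ across the equation automatic.

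First I would treat the case $\FuncII = 1_{\Set}$ for an $\Lebesgue^{\DataDim}$-measurable set $\Set \subset \Real^{\DataDim}$. The left-hand side is then $\int_{\Set} J \FuncVec \ab(\DataValVec) \odif{\Lebesgue^{\DataDim} \ab(\DataValVec)}$, while for each fixed $\ParamVec$ the inner integral collapses to $\int_{\FuncVec^{-1} \ab(\ab\{\ParamVec\})} 1_{\Set} \odif{\Hausdorff^{\DataDim - \ParamDim}} = \Hausdorff^{\DataDim - \ParamDim} \ab(\Set \cap \FuncVec^{-1} \ab(\ab\{\ParamVec\}))$, so the target equation is exactly the conclusion of Theorem~\ref{thm:CoareaMeasure}. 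Crucially, this base case also records that the map $\ParamVec \mapsto \Hausdorff^{\DataDim - \ParamDim} \ab(\Set \cap \FuncVec^{-1} \ab(\ab\{\ParamVec\}))$ is $\Lebesgue^{\ParamDim}$-measurable, which is precisely what makes the outer integral on the right-hand side well-defined.

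Next I would extend to nonnegative simple functions $\FuncII = \sum_{i} c_{i} 1_{\Set_{i}}$ with $c_{i} \ge 0$ by linearity: both the left-hand side and the inner Hausdorff integral are additive and positively homogeneous in $\FuncII$, and a finite nonnegative combination of $\Lebesgue^{\ParamDim}$-measurable functions of $\ParamVec$ is again measurable, so both the identity and the measurability of the inner integral persist. Finally, for a general nonnegative measurable $\FuncII$ I would choose an increasing sequence of simple functions $\FuncII_{n} \nearrow \FuncII$ and apply monotone convergence twice. On the left, $\FuncII_{n} J \FuncVec \nearrow \FuncII J \FuncVec$ yields convergence of $\int \FuncII_{n} J \FuncVec \odif{\Lebesgue^{\DataDim}}$ to $\int \FuncII J \FuncVec \odif{\Lebesgue^{\DataDim}}$. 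On the right, for each fixed $\ParamVec$ monotone convergence against $\Hausdorff^{\DataDim - \ParamDim}$ shows the inner integral of $\FuncII_{n}$ increases to that of $\FuncII$; the limiting inner integral is measurable in $\ParamVec$ as a pointwise monotone limit of the measurable functions from the simple-function step, and a second application of monotone convergence over $\Lebesgue^{\ParamDim}$ carries the limit through the outer integral. Equating the two limits gives the claim.

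The main obstacle is the bookkeeping of measurability in $\ParamVec$ of the inner integral at each stage, since without it the right-hand side is not even defined; this is resolved because measurability is supplied for indicators directly by Theorem~\ref{thm:CoareaMeasure} and is then preserved under finite nonnegative linear combinations and under pointwise monotone limits. The only other point to state carefully is that the twofold use of monotone convergence is legitimate precisely because every integrand is nonnegative, which simultaneously delivers the asserted equivalence of the two sides being $+\infty$.
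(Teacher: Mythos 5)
Your proposal is correct and follows essentially the same route as the paper's own proof: both bootstrap from the measure version of the coarea formula (Theorem~\ref{thm:CoareaMeasure}) applied to indicator sets, extend to nonnegative simple functions by linearity, and pass to general nonnegative measurable functions via two applications of the monotone convergence theorem. Your explicit tracking of the $\Lebesgue^{\ParamDim}$-measurability of the inner integral in $\ParamVec$ is a point the paper leaves implicit, but it is the same argument.
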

\begin{proof}
Since $\FuncII$ is a nonnegative measurable function, there exists an infinite sequence functions $\mathrm{s}_{1}, \mathrm{s}_{2}, \dots: \Real^{\DataDim} \to \overline{\Real}$ such that 
\begin{itemize}
    \item Each $\mathrm{s}_{n}$ is a simple function, i.e., there exist a nonnegative integer $M_{n} \in \Integer_{\ge 0}$, $M_{n}$ nonnegative real numbers $\alpha_{1, n}, \alpha_{2, n}, \dots, \alpha_{M_{n}, n} \in \Real_{\ge 0}$, and $M_{n}$ subsets $A_{1, n}, A_{2, n}, \dots, A_{M_n, n} \in \Real_{\ge 0}$ such that
    \begin{equation}
        \label{eqn:SimpleDef}
        \mathrm{s}_{n} \ab(\DataValVec) = \sum_{m=1}^{M_n} \alpha_{m,n} 1_{A_{m,n}} \ab(\DataValVec),
    \end{equation}
    for all $\DataValVec \in \Real^{\DataDim}$.
    \item The sequence $\mathrm{s}_{1}, \mathrm{s}_{2}, \dots$ is monotonous nondecreasing, i.e., for any $\DataValVec \in \Real^{\DataDim}$ and $n \in \Integer_{>0}$, the following holds:
    \begin{equation}
        \mathrm{s}_{n} \ab(\DataValVec) \le \mathrm{s}_{n+1} \ab(\DataValVec).
    \end{equation}
    \item The sequence $\mathrm{s}_{1}, \mathrm{s}_{2}, \dots$ converges to $\FuncII$ pointwisely, i.e., for any $\DataValVec \in \Real^{\DataDim}$, the following holds:
    \begin{equation}
        \lim_{n \to +\infty} \mathrm{s}_{n} \ab(\DataValVec) = \mathrm{\FuncII} \ab(\DataValVec).
    \end{equation}
\end{itemize}
Now, let us start evaluating the left-hand side of the Theorem.
We have the following equation:
\begin{equation}
\begin{split}
\int_{\Real^{\DataDim}} \FuncII \ab(\DataValVec) J \FuncVec \ab(\DataValVec) \odif{\Lebesgue^{\DataDim} \ab (\DataValVec)}
& =
\int_{\Real^{\DataDim}} \lim_{n \to +\infty} \mathrm{s}_{n} \ab(\DataValVec) J \FuncVec \ab(\DataValVec) \odif{\Lebesgue^{\DataDim} \ab (\DataValVec)}
\\
& =
\lim_{n \to +\infty} \int_{\Real^{\DataDim}} \mathrm{s}_{n} \ab(\DataValVec) J \FuncVec \ab(\DataValVec) \odif{\Lebesgue^{\DataDim} \ab (\DataValVec)},
\end{split}
\end{equation}
where the second equation holds because the nonnegativity of $J \FuncVec$ allows us to apply the monotonous convergence theorem.
Note that both sides of the equation may be $+\infty$. 
The equation also claims that if either side is $+\infty$ then the other is also $+\infty$.

By \eqref{eqn:SimpleDef} and the linearity of the integral, we have that
\begin{equation}
\begin{split}
& \lim_{n \to +\infty} \int_{\Real^{\DataDim}} \mathrm{s}_{n} \ab(\DataValVec) J \FuncVec \ab(\DataValVec) \odif{\Lebesgue^{\DataDim} \ab (\DataValVec)}
\\
& =
\lim_{n \to +\infty} \int_{\Real^{\DataDim}} \sum_{m=1}^{M_n} \alpha_{m,n} 1_{A_{m,n}} \ab(\DataValVec) J \FuncVec \ab(\DataValVec) \odif{\Lebesgue^{\DataDim} \ab (\DataValVec)}
\\
& =
\lim_{n \to +\infty} \sum_{m=1}^{M_n} \alpha_{m,n} \int_{\Real^{\DataDim}}  1_{A_{m,n}} \ab(\DataValVec) J \FuncVec \ab(\DataValVec) \odif{\Lebesgue^{\DataDim} \ab (\DataValVec)}
\\
& =
\lim_{n \to +\infty} \sum_{m=1}^{M_n} \alpha_{m,n} \int_{A_{m,n}} J \FuncVec \ab(\DataValVec) \odif{\Lebesgue^{\DataDim} \ab (\DataValVec)}.
\end{split}    
\end{equation}
Now, we are ready to apply Theorem \ref{thm:CoareaMeasure}, to obtain the following equation.
\begin{equation}
\label{eqn:CoareaApplied}
\begin{split}
& \lim_{n \to +\infty} \sum_{m=1}^{M_n} \alpha_{m,n} \int_{A_{m,n}} J \FuncVec \ab(\DataValVec) \odif{\Lebesgue^{\DataDim} \ab (\DataValVec)}
\\
&=
\lim_{n \to +\infty} \sum_{m=1}^{M_n} \alpha_{m,n} \int_{\Real^{\ParamDim}}
\Hausdorff^{\DataDim-\ParamDim} (A_{m,n} \cap \FuncVec^{-1} \ab (\ab \{\ParamVec\})) \odif{\Lebesgue^{\ParamDim} \ab (\ParamVec)}.
\end{split}
\end{equation}
We can evaluate the integral of the right-hand side as follows:
\begin{equation}
\begin{split}
& \int_{\Real^{\ParamDim}} \Hausdorff^{\DataDim-\ParamDim} (A_{m,n} \cap \FuncVec^{-1} \ab (\ab \{\ParamVec\})) \odif{\Lebesgue^{\ParamDim} \ab (\ParamVec)}
\\
& =
\int_{\Real^{\ParamDim}} \ab(\int_{\Real^{\DataDim}}  1_{A_{m,n} \cap \FuncVec^{-1} \ab (\ab \{\ParamVec\})} \ab(\DataValVec) \odif{\Hausdorff^{\DataDim-\ParamDim} \ab(\DataValVec)}) \odif{\Lebesgue^{\ParamDim} \ab(\ParamVec)}
\\
& =
\int_{\Real^{\ParamDim}} \ab(\int_{\Real^{\DataDim}}  1_{\FuncVec^{-1} \ab (\ab \{\ParamVec\})} \ab(\DataValVec) 1_{A_{m,n}} \ab(\DataValVec) \odif{\Hausdorff^{\DataDim-\ParamDim} \ab(\DataValVec)}) \odif{\Lebesgue^{\ParamDim} \ab(\ParamVec)}
\\
& =
\int_{\Real^{\ParamDim}} \ab(\int_{\FuncVec^{-1} \ab (\ab \{\ParamVec\})} 1_{A_{m,n}} \ab(\DataValVec) \odif{\Hausdorff^{\DataDim-\ParamDim} \ab(\DataValVec)}) \odif{\Lebesgue^{\ParamDim} \ab(\ParamVec)},
\\
\end{split}
\end{equation}
where the first equation is due to the definition of the integral of an indicator function and the third equation follows from the definition of limiting the domain of integration.
Using the above equation, we can evaluate the right-hand side of \eqref{eqn:CoareaApplied} as follows:
\begin{equation}
\begin{split}
& \lim_{n \to +\infty} \sum_{m=1}^{M_n} \alpha_{m,n} \int_{\Real^{\ParamDim}}
\Hausdorff^{\DataDim-\ParamDim} (A_{m,n} \cap \FuncVec^{-1} \ab (\ab \{\ParamVec\})) \odif{\Lebesgue^{\ParamDim} \ab (\ParamVec)}
\\
& =
\lim_{n \to +\infty} \sum_{m=1}^{M_n} \alpha_{m,n} \int_{\Real^{\ParamDim}} \ab(\int_{\FuncVec^{-1} \ab (\ab \{\ParamVec\})} 1_{A_{m,n}} \ab(\DataValVec) \odif{\Hausdorff^{\DataDim-\ParamDim} \ab(\DataValVec)}) \odif{\Lebesgue^{\ParamDim} \ab(\ParamVec)}
\\
& =
\lim_{n \to +\infty} \int_{\Real^{\ParamDim}} \ab(\int_{\FuncVec^{-1} \ab (\ab \{\ParamVec\})} \sum_{m=1}^{M_n} \alpha_{m,n} 1_{A_{m,n}} \ab(\DataValVec) \odif{\Hausdorff^{\DataDim-\ParamDim} \ab(\DataValVec)}) \odif{\Lebesgue^{\ParamDim} \ab(\ParamVec)}
\\
& =
\lim_{n \to +\infty} \int_{\Real^{\ParamDim}} \ab(\int_{\FuncVec^{-1} \ab (\ab \{\ParamVec\})} \mathrm{s}_{n} \ab(\DataValVec) \odif{\Hausdorff^{\DataDim-\ParamDim} \ab(\DataValVec)}) \odif{\Lebesgue^{\ParamDim} \ab(\ParamVec)}.
\end{split}
\end{equation}
Apply the monotone convergence theorem, then we obtain
\begin{equation}
\begin{split}
& \lim_{n \to +\infty} \int_{\Real^{\ParamDim}} \ab(\int_{\FuncVec^{-1} \ab (\ab \{\ParamVec\})} \mathrm{s}_{n} \ab(\DataValVec) \odif{\Hausdorff^{\DataDim-\ParamDim} \ab(\DataValVec)}) \odif{\Lebesgue^{\ParamDim} \ab(\ParamVec)}
\\
& =
\int_{\Real^{\ParamDim}} \ab(\int_{\FuncVec^{-1} \ab (\ab \{\ParamVec\})} \lim_{n \to +\infty} \mathrm{s}_{n} \ab(\DataValVec) \odif{\Hausdorff^{\DataDim-\ParamDim} \ab(\DataValVec)}) \odif{\Lebesgue^{\ParamDim} \ab(\ParamVec)}.
\\
& =
\int_{\Real^{\ParamDim}} \ab(\int_{\FuncVec^{-1} \ab (\ab \{\ParamVec\})} \FuncII \ab(\DataValVec) \odif{\Hausdorff^{\DataDim-\ParamDim} \ab(\DataValVec)}) \odif{\Lebesgue^{\ParamDim} \ab(\ParamVec)},
\\
\end{split}
\end{equation}
which completes the proof.
\end{proof}

\begin{proof}[Proof of Theorem \ref{thm:CoareaChangeVariable}]
Substitute $\FuncII \ab(\DataValVec) = \FuncIII \ab(\DataValVec) \ab(J \FuncVec \ab(\DataValVec))^{-1'} = \FuncIII \ab(\DataValVec) \frac{1\ab\{J \FuncVec \ab(\DataValVec) > 0\}}{J \FuncVec \ab(\DataValVec)}$ in \eqref{eqn:CoareaIntegralOriginal}.
Then, the left-hand side equals 
\begin{equation}
\begin{split}
\int_{\Real^{\ParamDim}} \FuncII (\DataValVec) J \FuncVec \ab (\DataValVec) \odif{\Lebesgue^{\DataDim} \ab (\DataValVec)}
& =
\int_{\Real^{\ParamDim}} \FuncIII \ab(\DataValVec) \frac{1\ab\{J \FuncVec \ab(\DataValVec) > 0\}}{J \FuncVec \ab(\DataValVec)} J \FuncVec \ab (\DataValVec) \odif{\Lebesgue^{\DataDim} \ab (\DataValVec)}
\\
& =
\int_{\Real^{\ParamDim}} \FuncIII \ab(\DataValVec) 1\ab\{J \FuncVec \ab(\DataValVec) > 0\} \odif{\Lebesgue^{\DataDim} \ab (\DataValVec)}
\\
& =
\int_{\Real^{\ParamDim}} \FuncIII \ab(\DataValVec)  \odif{\Lebesgue^{\DataDim} \ab (\DataValVec)},
\\
\end{split}
\end{equation}
where the last equation holds since $J \FuncVec \ab(\DataValVec) > 0$ is satisfied almost everywhere with respect to $\Lebesgue^{\DataDim}$.
\begin{equation}
\begin{split}
& \int_{\Real^{\ParamDim}}
\ab (
\int_{\FuncVec^{-1} \ab (\ab \{\ParamVec\})} \FuncII (\DataValVec)
\odif{\Hausdorff^{\DataDim-\ParamDim} \ab (\DataValVec)}
)
\odif{\Lebesgue^{\ParamDim} \ab (\ParamVec)}
\\
& =
\int_{\Real^{\ParamDim}}
\ab (
\int_{\FuncVec^{-1} \ab (\ab \{\ParamVec\})} \FuncIII \ab(\DataValVec) \frac{1\ab\{J \FuncVec \ab(\DataValVec) > 0\}}{J \FuncVec \ab(\DataValVec)}
\odif{\Hausdorff^{\DataDim-\ParamDim} \ab (\DataValVec)}
)
\odif{\Lebesgue^{\ParamDim} \ab (\ParamVec)},
\end{split}
\end{equation}
which completes the proof.
\end{proof}

\section*{Acknowledgment}

This work is partially supported by JSPS KAKENHI  JP24H00703.


\ifCLASSOPTIONcaptionsoff
  \newpage
\fi

\bibliographystyle{IEEEtran}
\bibliography{ref.bib}

\begin{thebibliography}{10}
\providecommand{\url}[1]{#1}
\csname url@samestyle\endcsname
\providecommand{\newblock}{\relax}
\providecommand{\bibinfo}[2]{#2}
\providecommand{\BIBentrySTDinterwordspacing}{\spaceskip=0pt\relax}
\providecommand{\BIBentryALTinterwordstretchfactor}{4}
\providecommand{\BIBentryALTinterwordspacing}{\spaceskip=\fontdimen2\font plus
\BIBentryALTinterwordstretchfactor\fontdimen3\font minus \fontdimen4\font\relax}
\providecommand{\BIBforeignlanguage}[2]{{%
\expandafter\ifx\csname l@#1\endcsname\relax
\typeout{** WARNING: IEEEtran.bst: No hyphenation pattern has been}%
\typeout{** loaded for the language `#1'. Using the pattern for}%
\typeout{** the default language instead.}%
\else
\language=\csname l@#1\endcsname
\fi
#2}}
\providecommand{\BIBdecl}{\relax}
\BIBdecl

\bibitem{Grunwald:2007}
P.~D. Gr{\"u}nwald, \emph{The minimum description length principle}.\hskip 1em plus 0.5em minus 0.4em\relax MIT press, 2007.

\bibitem{Shtarkov:1987}
Y.~M. Shtar'kov, ``Universal sequential coding of single messages,'' \emph{Problemy Peredachi Informatsii}, vol.~23, no.~3, pp. 3--17, 1987.

\bibitem{Rissanen:1989}
J.~Rissanen, \emph{Stochastic complexity in statistical inquiry}.\hskip 1em plus 0.5em minus 0.4em\relax World Scientific, 1989.

\bibitem{Rissanen:1996}
------, ``Fisher information and stochastic complexity,'' \emph{IEEE Transactions on Information Theory,}, vol.~42, no.~1, pp. 40--47, 1996.

\bibitem{tabus2003classification}
I.~Tabus, J.~Rissanen, and J.~Astola, ``Classification and feature gene selection using the normalized maximum likelihood model for discrete regression,'' \emph{Signal Processing}, vol.~83, no.~4, pp. 713--727, 2003.

\bibitem{Myung+:2006}
J.~I. Myung, D.~J. Navarro, and M.~A. Pitt, ``Model selection by normalized maximum likelihood,'' \emph{Journal of Mathematical Psychology}, vol.~50, no.~2, pp. 167--179, 2006.

\bibitem{Roos+:2008}
T.~Roos, T.~Silander, P.~Kontkanen, and P.~Myllymaki, ``Bayesian network structure learning using factorized nml universal models,'' in \emph{Information Theory and Applications Workshop, 2008}.\hskip 1em plus 0.5em minus 0.4em\relax IEEE, 2008, pp. 272--276.

\bibitem{schmidt2010estimating}
D.~F. Schmidt and E.~Makalic, ``Estimating the order of an autoregressive model using normalized maximum likelihood,'' \emph{IEEE Transactions on Signal Processing}, vol.~59, no.~2, pp. 479--487, 2010.

\bibitem{giurcuaneanu2011variable}
C.~D. Giurc{\u{a}}neanu, S.~A. Razavi, and A.~Liski, ``Variable selection in linear regression: Several approaches based on normalized maximum likelihood,'' \emph{Signal Processing}, vol.~91, no.~8, pp. 1671--1692, 2011.

\bibitem{zhang2012model}
J.~Zhang, ``Model selection with informative normalized maximum likelihood: Data prior and model prior,'' in \emph{Descriptive and normative approaches to human behavior}.\hskip 1em plus 0.5em minus 0.4em\relax World Scientific, 2012, pp. 303--319.

\bibitem{Hirai&Yamanishi:2013:Efficient}
S.~Hirai and K.~Yamanishi, ``Efficient computation of normalized maximum likelihood codes for gaussian mixture models with its applications to clustering,'' \emph{IEEE Transactions on Information Theory}, vol.~59, no.~11, pp. 7718--7727, 2013.

\bibitem{staniczenko2014selecting}
P.~P. Staniczenko, M.~J. Smith, and S.~Allesina, ``Selecting food web models using normalized maximum likelihood,'' \emph{Methods in Ecology and Evolution}, vol.~5, no.~6, pp. 551--562, 2014.

\bibitem{Ito:2016}
Y.~Ito, S.~Oeda, and K.~Yamanishi, ``Selecting ranks and detecting their changes for non-negative matrix factorization using normalized maximum likelihood coding,'' in \emph{Proceedings of SIAM International Conference on Data Mining}, 2016.

\bibitem{Suzuki:2016}
A.~Suzuki, K.~Miyaguchi, and K.~Yamanishi, ``Structure selection for convolutive non-negative matrix factorization using normalized maximum likelihood coding,'' in \emph{The IEEE International Conference on Data Mining series}.\hskip 1em plus 0.5em minus 0.4em\relax IEEE, 2016.

\bibitem{yamanishi2019decomposed}
K.~Yamanishi, T.~Wu, S.~Sugawara, and M.~Okada, ``The decomposed normalized maximum likelihood code-length criterion for selecting hierarchical latent variable models,'' \emph{Data Mining and Knowledge Discovery}, vol.~33, pp. 1017--1058, 2019.

\bibitem{kellen2020selecting}
D.~Kellen and K.~C. Klauer, ``Selecting amongst multinomial models: An apologia for normalized maximum likelihood,'' \emph{Journal of Mathematical Psychology}, vol.~97, p. 102367, 2020.

\bibitem{yamanishi2023detecting}
K.~Yamanishi and S.~Hirai, ``Detecting signs of model change with continuous model selection based on descriptive dimensionality,'' \emph{Applied Intelligence}, vol.~53, no.~22, pp. 26\,454--26\,471, 2023.

\bibitem{grunwald2019minimum}
P.~Gr{\"u}nwald and T.~Roos, ``Minimum description length revisited,'' \emph{International journal of mathematics for industry}, vol.~11, no.~01, p. 1930001, 2019.

\bibitem{Kraft:1949}
L.~G. Kraft, ``A device for quantizing, grouping, and coding amplitude-modulated pulses,'' Ph.D. dissertation, Massachusetts Institute of Technology, 1949.

\bibitem{McMillan:1956}
B.~McMillan, ``Two inequalities implied by unique decipherability,'' \emph{IRE Transactions on Information Theory}, vol.~2, no.~4, pp. 115--116, 1956.

\bibitem{Kontkanen&Myllymaki:2007:Linear}
P.~Kontkanen and P.~Myllym{\"a}ki, ``A linear-time algorithm for computing the multinomial stochastic complexity,'' \emph{Information Processing Letters}, vol. 103, no.~6, pp. 227--233, 2007.

\bibitem{Mononen&Myllymaki:2007:Fast}
T.~Mononen and P.~Myllym{\"a}ki, ``Fast nml computation for naive bayes models,'' in \emph{International Conference on Discovery Science}.\hskip 1em plus 0.5em minus 0.4em\relax Springer, 2007, pp. 151--160.

\bibitem{barron1998minimum}
A.~Barron, J.~Rissanen, and B.~Yu, ``The minimum description length principle in coding and modeling,'' \emph{IEEE transactions on information theory}, vol.~44, no.~6, pp. 2743--2760, 1998.

\bibitem{Hirai+:2013}
S.~Hirai and K.~Yamanishi, ``Efficient computation of normalized maximum likelihood codes for gaussian mixture models with its applications to clustering,'' \emph{IEEE Transactions on Information Theory}, vol.~59, no.~11, pp. 7718--7727, 2013.

\bibitem{yamanishi2023learning}
K.~Yamanishi, \emph{Learning with the Minimum Description Length Principle}.\hskip 1em plus 0.5em minus 0.4em\relax Springer, 2023.

\bibitem{Rissanen:2007:Information}
J.~Rissanen, \emph{Information and complexity in statistical modeling}.\hskip 1em plus 0.5em minus 0.4em\relax Springer Science \& Business Media, 2007.

\bibitem{Rissanen:2012:Optimal}
------, \emph{Optimal Estmation of Parameters}.\hskip 1em plus 0.5em minus 0.4em\relax Cambridge, 2012.

\bibitem{Rissanen:2000}
------, ``Mdl denoising,'' \emph{IEEE Transactions on Information Theory}, vol.~46, no.~7, pp. 2537--2543, 2000.

\bibitem{Suzuki&Yamanishi:2018:Exact}
A.~Suzuki and K.~Yamanishi, ``Exact calculation of normalized maximum likelihood code length using fourier analysis,'' in \emph{2018 IEEE International Symposium on Information Theory (ISIT)}.\hskip 1em plus 0.5em minus 0.4em\relax IEEE, 2018, pp. 1211--1215.

\bibitem{suzuki2021fourier}
------, ``Fourier-analysis-based form of normalized maximum likelihood: Exact formula and relation to complex bayesian prior,'' \emph{IEEE Transactions on Information Theory}, vol.~67, no.~9, pp. 6164--6178, 2021.

\bibitem{evans2018measure}
L.~Evans, \emph{Measure theory and fine properties of functions}.\hskip 1em plus 0.5em minus 0.4em\relax Routledge, 2018.

\end{thebibliography}

\end{document}